\pdfoutput=1
\documentclass{article}

\usepackage{amsmath,amssymb,amsthm}
\usepackage{thomas}
\usepackage{tznotes}%[disable]{tznotes}
\usepackage{reference}
%GATHER{tznotes.sty}
%GATHER{thomas.sty}
%GATHER{reference.sty}
\usepackage{url}
\usepackage[latin1]{inputenc}
\usepackage{mdwlist}
\usepackage{refcount}
\usepackage{ifthen}
\usepackage{etoolbox}

% Extend for keywords and classification
\makeatletter
\newcommand{\subjclass}[2][2010]{%
  \let\@oldtitle\@title%
  \gdef\@title{\@oldtitle\footnotetext{#1 \emph{Mathematics subject classification.} #2}}%
}
\newcommand{\keywords}[1]{%
  \let\@@oldtitle\@title%
  \gdef\@title{\@@oldtitle\footnotetext{\emph{Key words and phrases.} #1.}}%
}
\makeatother

% Specific notion for this paper
\newcommand{\Hinf}{(H$\infty$)}
\newcommand{\CH}{CH}

% References / promises
\newif\ifpromise

\newtoggle{Steve}
\togglefalse{Steve}
\newtoggle{SF}
\togglefalse{SF}

\begin{document}
\author{Thomas Zürcher}
\title{Space-Filling vs.\
Luzin's Condition~(N)}
\keywords{Space-Fillings, Luzin's Condition~(N), mappings between spaces of different dimensions}
\subjclass{28A75, 54C10, 26B35, 28A12, 28A20}
\maketitle
\begin{abstract}
Let us assume that we are given two metric spaces $(X,d_X)$ and $(Y,d_Y)$ where the Hausdorff dimension $s$ of $X$ is strictly smaller than that of $Y$. Suppose that $X$ is $\sigma$\nobreakdash-finite with respect to $\mathcal{H}^s$. Then we show that for quite general metric spaces, if $f\colon X\to Y$ is a measurable surjection, there is a set $N\subset X$ with $\mathcal{H}^s(N)=0$ and $\mathcal{H}^s(f(N))>0$. If $f$ is continuous, then we investigate whether $N$ can be chosen to be perfect.

We also study more general situations where the measures on $X$ and $Y$ are not necessarily the same and not necessarily Hausdorff measures.
\end{abstract}

\tableofcontents
\section{Introduction}
Around 1877,
G.~Cantor realized that there exists a bijection between the unit
interval $[0,1]$ and the unit square $[0,1]^2$. Motivated by this
result, there has been an interest in understanding how mappings can increase the
dimension. E.~Netto proved that it is impossible to find a
continuous bijection between the interval and the square. However, G.~Peano's work showed that one can
construct a continuous surjection from the interval onto the
square. We refer the reader to the book of H.~Sagan for a more
detailed account of the history of space-filling curves,
\cite{Sagan}. Denoting\footnote{We use the symbol $\surj$ to indicate that the mapping under consideration is a surjection.} by $p\colon [0,1]\surj [0,1]^2$ the space-filling curve from Peano's construction, we can define $f\colon [0,1]^2\surj [0,1]^2$ by $f(x,y)=p(x)$ and see
that there are  one-dimensional
sets, the intervals $[0,1]\times\{x\}$, \mbox{$x\in [0,1]$}, for example, that are mapped onto
two-dimensional sets. This violates \emph{Luzin's condition~(N)}, which requires that sets of measure zero be mapped to sets of
measure zero. Luzin's condition~(N) is important in applications such as elasticity, see
e.g.\ \cite{Cavitation}. It is also a requirement for various area- and change of variables-formulas to hold, see for example Proposition~1.1 in \cite{MalyTheArea}.

We take the following question as
starting point for our inquiry: \lq\lq Given a continuous
surjection $f\colon [0,1]\surj [0,1]^2$ is there a set $N\subset
[0,1]$ such that $\mathcal{H}^1(N)=0$ and
$\mathcal{H}^1(f(N))>0$?\rq\rq\ The fact that the target is higher
dimensional than the domain means philosophically that some subsets of the domain are
\lq\lq blown up\rq\rq. Luzin's condition~(N) requires that \emph{small
sets stay small}. However, space-filling maps could \emph{still}
satisfy Luzin's condition~(N) by mapping \emph{only} sets of
positive $\mathcal{H}^1$\nobreakdash-measure onto two-dimensional
sets.

In \cite{Paper2,Paper1,Paper3}, we (K.~Wildrick and the author) have studied space-fillings and Luzin's condition~(N) with
respect to so called\footnote{These spaces have also been studied in \cite{Ranjbar} and \cite{romanov_absolute_2008} in the metric setting and in \cite{kauhanen_functions_1999} in the Euclidean setting.}  \emph{Sobolev-Lorentz spaces}. These are
generalizations of Sobolev spaces, spaces of mappings that possess some sort of derivatives that have a certain integrability.
We further argued that a space-filling in a Sobolev-Lorentz space
cannot satisfy Luzin's condition~(N), the reason being that we can
partition the space in a null set and countably many sets where the restrictions of the
mapping are Lipschitz. As Lipschitz mappings do not increase the
dimension, it is necessary that the null set is mapped to a set of
higher dimension. The following criterion is basically Theorem~1
in \cite{Troyanov}:
\begin{lemma}[Theorem~1 in \cite{Troyanov}]\tlabel{Troyanov}
Let $f\colon X\to Y$ be a mapping between two metric spaces. Suppose that $X$
is locally compact and separable and that $\mu$ is a Radon measure
on $X$. Then the following conditions are equivalent:
\begingroup
\renewcommand{\theenumi}{\alph{enumi}}
\def\p@enumii{\theenumi}
\def\labelenumi{(\theenumi)}
\begin{enumerate}
\item There exists a measurable function $w\colon X\to [0,\infty]$ that is
finite almost everywhere and such that
\begin{equation*}
d_Y(f(x_1),f(x_2))\leq d_X(x_1,x_2)(w(x_1)+w(x_2))
\end{equation*}
for all $x_1$ and $x_2$ in $X$.
\item \label{TroyanovPartition}There exists a monotone sequence of compact subsets $K_1\subset K_2\subset \cdots \subset X$
such that $f\trestriction K_k$ is Lipschitz and $\mu(X\setminus \cup_{k=1}^\infty K_k)=0$.
\end{enumerate}
\endgroup
\end{lemma}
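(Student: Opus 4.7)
The implication $(b) \Rightarrow (a)$ is the easier direction. After replacing the sequence of Lipschitz constants $L_k$ of $f|_{K_k}$ by an increasing sequence if necessary, I would define $w \colon X \to [0,\infty]$ by $w(x) = L_k$ for $x \in K_k \setminus K_{k-1}$ (with $K_0 := \emptyset$) and $w(x) = \infty$ on the null set $X \setminus \bigcup_k K_k$. This $w$ is measurable, because each $K_k$ is closed, and finite almost everywhere by hypothesis. For $x_1 \in K_{k_1}$, $x_2 \in K_{k_2}$ with $k_1 \leq k_2$, both points lie in $K_{k_2}$, whence $d_Y(f(x_1),f(x_2)) \leq L_{k_2}\,d_X(x_1,x_2) \leq (w(x_1)+w(x_2))\,d_X(x_1,x_2)$; if either point lies outside $\bigcup_k K_k$ the inequality is trivial.

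For the harder direction $(a) \Rightarrow (b)$, the plan is to first produce a measurable nested exhaustion on which $f$ is Lipschitz and then upgrade it to one by compacts. Setting $A_n := \{x \in X : w(x) \leq n\}$, the inequality in (a) immediately gives that $f|_{A_n}$ is $2n$-Lipschitz. Since $w$ is finite $\mu$-almost everywhere, $\mu(X \setminus \bigcup_n A_n) = 0$, so the increasing sequence of \emph{measurable} sets $A_n$ already exhausts $X$ modulo a null set.

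The remaining work is replacing the $A_n$ by compacts, and here I would invoke the Radon property: on a locally compact separable metric space every Radon measure is $\sigma$-finite, and every measurable set is inner regular by compact subsets in the $\sigma$-finite sense. Hence for each $n$ I can choose an increasing sequence of compacts $C_n^1 \subset C_n^2 \subset \cdots \subset A_n$ with $\mu(A_n \setminus \bigcup_j C_n^j) = 0$. Define
\[
K_k := C_1^k \cup C_2^k \cup \cdots \cup C_k^k.
\]
Each $K_k$ is compact as a finite union of compact sets; the nesting $C_n^k \subset C_n^{k+1}$ gives $K_k \subset K_{k+1}$; and $K_k \subset A_k$ ensures $f|_{K_k}$ is $2k$-Lipschitz. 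A short rearrangement shows $\bigcup_k K_k = \bigcup_n \bigcup_j C_n^j$, which covers $\bigcup_n A_n$ up to a null set, so $\mu(X \setminus \bigcup_k K_k) = 0$.

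The only genuinely delicate point is the inner-regularity approximation and the way it interacts with $\sigma$-finiteness; everything else is careful bookkeeping. I do not expect a serious obstacle beyond identifying the correct formulation of ``Radon'' that is being assumed.
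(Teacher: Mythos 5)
The paper does not prove this lemma; it is quoted verbatim as Theorem~1 of the cited work of Troyanov, so there is no in-paper argument to compare against. Your proof is correct and is essentially the standard one. For (b)$\Rightarrow$(a), note only that the minimal Lipschitz constants of $f\trestriction K_k$ are automatically non-decreasing because the $K_k$ are nested, and that when $w(x_1)=w(x_2)=\infty$ the inequality still holds at $x_1=x_2$ since both sides vanish. For (a)$\Rightarrow$(b), the one point you rightly flag is the inner regularity step: a locally compact separable metric space is second countable, hence Lindel\"of, hence $\sigma$\nobreakdash-compact, so a locally finite Radon measure on it is $\sigma$\nobreakdash-finite and every measurable set is exhausted up to a null set by an increasing sequence of compacta; with that in hand your diagonal sets $K_k=C_1^k\cup\cdots\cup C_k^k$ do exactly what is needed, since $K_k\subset A_k$ keeps the restriction $2k$\nobreakdash-Lipschitz and the countable subadditivity estimate $\sum_n\mu(A_n\setminus\bigcup_j C_n^j)=0$ gives $\mu(X\setminus\bigcup_k K_k)=0$.
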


In this article, we further investigate the dichotomy between
Luzin's condition~(N) and space-fillings on a more abstract level
than in our previous work. Especially, we look also at settings where \eqref{TroyanovPartition} of \tref{Troyanov} is not available.

More precisely, we will look at mappings of which we me merely know that they are continuous or sometimes even only measurable.

To give a flavor of what kind of results we obtain, we state now one of the results of this article and a corollary. The terms will be explained in the course of the article.
\begin{theorem}\tlabel{MainInfinite}\label{MainInfiniteLab}
Let $(X,\mathfrak{T})$ be a regular, second countable topological
space. Let $\mu$ be a Borel measure on $X$ that is
$\mathcal{G}_\delta$\nobreakdash-regular and such that $X$ is
$\sigma$\nobreakdash-finite with respect to $\mu$. We further require that $X$ can be written as countable union of compact sets and a set of measure zero.

Suppose that $(Y,d)$ is a metric space with Hausdorff measure
$\mathcal{H}^h$ that is of finite order, $h(0)=0$, and assume that $Y$ is not
$\sigma$\nobreakdash-finite with respect to $\mathcal{H}^h$.

If $f\colon X\surj Y$ is a measurable surjection, then there
exists a set $N\subset X$ with $\mu(N)=0$ and
$\mathcal{H}^h(f(N))>0$.
\end{theorem}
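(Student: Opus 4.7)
The plan is to argue by contradiction: assume that every $\mu$-null set $N\subset X$ satisfies $\mathcal{H}^h(f(N))=0$ (that is, $f$ satisfies Luzin's condition~(N)), and to deduce that $Y$ must be $\sigma$-finite with respect to $\mathcal{H}^h$, contradicting the hypothesis on $Y$.

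Using the hypothesis on $X$ together with the $\sigma$-finiteness of $\mu$, I first write $X = N_0 \cup \bigcup_k K_k$ where $\mu(N_0)=0$ and each $K_k$ is compact of finite $\mu$-measure. Under the contradiction hypothesis, $\mathcal{H}^h(f(N_0))=0$. Because $(X,\mathfrak{T})$ is regular and second countable and $\mu$ is $\mathcal{G}_\delta$-regular, a Lusin-type theorem applies to the measurable map $f$ on each $K_k$, yielding compact sets $L_{k,n}\subset K_k$ with $f|_{L_{k,n}}$ continuous and $\mu(K_k\setminus\bigcup_n L_{k,n})=0$. Invoking the contradiction hypothesis once more on these null remainders, we obtain $\mathcal{H}^h\bigl(Y\setminus\bigcup_{k,n} f(L_{k,n})\bigr)=0$.

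For each pair $(k,n)$ let $\nu_{k,n} := (f|_{L_{k,n}})_\ast(\mu|_{L_{k,n}})$, a finite Borel measure supported on the compact set $f(L_{k,n})\subset Y$. If $A\subset f(L_{k,n})$ is Borel with $\nu_{k,n}(A)=0$, then $N':=f^{-1}(A)\cap L_{k,n}$ has $\mu(N')=0$, and since $A\subset f(L_{k,n})$ we get $f(N')=A$; the contradiction hypothesis forces $\mathcal{H}^h(A)=0$. Thus $\mathcal{H}^h\ll\nu_{k,n}$ on $f(L_{k,n})$.

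Now I invoke the finite order of $h$, which classically implies that $\mathcal{H}^h$ is semifinite (every set of infinite $\mathcal{H}^h$-measure contains a subset of positive finite $\mathcal{H}^h$-measure). A maximality argument then forces $\mathcal{H}^h$ to be $\sigma$-finite on $f(L_{k,n})$: choose a maximal disjoint family $\{E_\alpha\}$ of Borel subsets of $f(L_{k,n})$ with $0<\mathcal{H}^h(E_\alpha)<\infty$; by absolute continuity each $\nu_{k,n}(E_\alpha)>0$, and since $\nu_{k,n}$ is finite the family is countable, while semifiniteness combined with maximality forces $\mathcal{H}^h(f(L_{k,n})\setminus\bigcup_\alpha E_\alpha)=0$. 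Assembling the pieces, $Y$ is a countable union of $\sigma$-finite sets together with an $\mathcal{H}^h$-null set, hence itself $\sigma$-finite with respect to $\mathcal{H}^h$, contradicting the hypothesis. The main obstacle is the passage from the contradiction assumption to $\sigma$-finiteness of the image of each continuous piece; this is exactly where both the $\mathcal{G}_\delta$-regularity of $\mu$ (to enable Lusin) and the finite-order hypothesis on $h$ (to enable semifiniteness) are used essentially.
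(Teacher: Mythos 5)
Your argument is correct, and while the first half follows the paper's route, the second half is genuinely different. Like the paper, you first decompose $X$ into compact pieces of finite measure plus a null set (this uses the regular/second countable hypotheses to make open sets $\mathcal{F}_\sigma$, hence measurable sets inner regular by compacts inside each $K_k$) and then apply Luzin's theorem to get compact sets $L_{k,n}$ on which $f$ is continuous; this is exactly the content of \tref{UnionOfCompacts}, \tref{OpenIsFSigma} and \tref{ContinuousDecomposition}. From that point the paper locates a single piece $M_n$ whose image is non-$\sigma$-finite and feeds it into \tref{GeneralResult}, i.e.\ into Howroyd's theorem plus Zorn's lemma to manufacture uncountably many pairwise disjoint compact sets of positive measure, followed by the cardinality pigeonhole of \tref{StandaAbstract}. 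You instead push $\mu$ forward to a finite Borel measure $\nu_{k,n}$ on each compact image, observe that condition~(N) forces $\mathcal{H}^h\ll\nu_{k,n}$ there, and conclude via a maximal disjoint family that each $f(L_{k,n})$ is $\sigma$-finite, so $Y$ is --- a direct contradiction. This is cleaner in that it avoids the cardinal bookkeeping of \tref{StandaAbstract}, and it in effect proves the paper's rigidity statement (\tref{Rigidity}), that condition~(N) forces the image to be $\sigma$-finite, as the engine of the argument rather than as a corollary. One caveat: the step ``finite order of $h$ classically implies $\mathcal{H}^h$ is semifinite'' is not an elementary fact about Hausdorff measures; it is precisely Howroyd's subsets-of-finite-measure theorem (\tref{ExistenceOfSubsets}) and is valid here only because you apply it to Borel subsets of the compact (hence Polish, hence analytic) sets $f(L_{k,n})$ --- for non-analytic sets it can fail. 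So both proofs ultimately rest on the same deep ingredient; you should cite it as such rather than as a classical semifiniteness property.
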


As corollary, we obtain the following:
\begin{corollary}\tlabel{Dimension}
Assume that $s>0$ and $(X,d_X)$ is separable and $\sigma$\nobreakdash-finite with respect to
$\mathcal{H}^s$ and can be written as countable union of compact
sets and a set of measure zero. Suppose that the metric space $(Y,d_Y)$ has dimension
$t$. If $f\colon X\surj Y$ is a measurable surjection, then there exists a set $N\subset X$ such that
\mbox{$\mathcal{H}^s(N)=0$} and $\dim_Hf(N)=t$.
\end{corollary}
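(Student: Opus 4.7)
The plan is to apply \tref{MainInfinite} to a sequence of Hausdorff gauge functions whose orders approach $t$ from below, and to glue the resulting null sets into a single one.

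First, I would verify that the hypotheses of \tref{MainInfinite} hold with $\mu=\mathcal{H}^s$. Since $(X,d_X)$ is separable and metric, it is automatically second countable and regular as a topological space. The Hausdorff measure $\mathcal{H}^s$ is Borel regular in any metric space; combined with the assumed $\sigma$\nobreakdash-finiteness, a standard exhaustion argument (approximate each finite piece from outside by an open set, intersect) yields $\mathcal{G}_\delta$\nobreakdash-regularity. The remaining hypothesis that $X$ be a countable union of compacta plus a null set is part of the assumption of the corollary.

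Next, for each $t'\in(0,t)$ I would set $h_{t'}(r):=r^{t'}$; this gauge has finite order and satisfies $h_{t'}(0)=0$. The key observation is that $Y$ cannot be $\sigma$\nobreakdash-finite with respect to $\mathcal{H}^{t'}$: if $Y=\bigcup_k Y_k$ with $\mathcal{H}^{t'}(Y_k)<\infty$, then $\mathcal{H}^r(Y_k)=0$ for every $r>t'$, so $\dim_H Y\leq t'<t$, contradicting $\dim_H Y=t$. Therefore \tref{MainInfinite}, applied with $\mu=\mathcal{H}^s$ and $h=h_{t'}$, supplies a set $N_{t'}\subset X$ with $\mathcal{H}^s(N_{t'})=0$ and $\mathcal{H}^{t'}(f(N_{t'}))>0$; in particular $\dim_H f(N_{t'})\geq t'$.

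To finish, I would choose a sequence $(t_n)_{n\in\mathbb{N}}$ with $t_n\in(0,t)$ and $t_n\nearrow t$ and set $N:=\bigcup_n N_{t_n}$. A countable union of $\mathcal{H}^s$\nobreakdash-null sets is null, so $\mathcal{H}^s(N)=0$; and $f(N)\supset f(N_{t_n})$ gives $\dim_H f(N)\geq t_n$ for every $n$, hence $\dim_H f(N)\geq t$. Since $f(N)\subset Y$ forces $\dim_H f(N)\leq \dim_H Y=t$, equality follows.

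The step I expect to require the most care is the verification that the measurability hypothesis on $f$ demanded by \tref{MainInfinite} is compatible with the one available in the corollary, together with the $\mathcal{G}_\delta$\nobreakdash-regularity of $\mathcal{H}^s$; both are essentially bookkeeping once the regularity framework is fixed. Degenerate cases ($t=0$, where the sequence $(t_n)$ is empty, or $t=\infty$) need a brief separate comment, but in both the conclusion is either automatic or follows by letting $t_n\nearrow\infty$ in the same scheme.
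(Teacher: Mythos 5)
Your proposal is correct and follows essentially the same route as the paper: the paper proves this corollary by repeating the argument of \tref{DimensionBusiness}, i.e.\ applying \tref{MainInfinite} with the gauges $r\mapsto r^{t-1/n}$ (noting, as you do, that $Y$ cannot be $\sigma$\nobreakdash-finite for $\mathcal{H}^{t'}$ when $t'<t=\dim_H Y$) and taking the countable union of the resulting null sets. The only cosmetic difference is that the $\mathcal{G}_\delta$\nobreakdash-regularity of $\mathcal{H}^s$ is quoted directly from \tref{RogersRegular} rather than re-derived by an exhaustion argument.
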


Based on \cite{HajlaszTyson} and \cite{Paper2}, we will give a construction scheme for space-fillings between rather general metric measure spaces that map a perfect set of measure zero to the whole target space, see \tref{SpaceFilling}. This leads to the question if we can always find a (perfect) set of measure zero that is mapped to the whole space. However, for example in the case $f\colon [0,1]\surj [0,1]^2$, this fails if $f$ is $1/2$\nobreakdash-H\"older, which may happen:
%\questionnote{Ask Steve: May be not correct ref since version is different}
\toggletrue{Steve}
\begin{theorem}[Theorem~3 in \cite{BuckleyPeanoPre}]\tlabel{Buckley}
There exist Peano curves $F\colon [0,1]\to [0,1]^2$ that are
$\alpha$\nobreakdash-H\"older continuous for $\alpha=1/2$, but no
such curve is $\alpha$\nobreakdash-H\"older continuous for
$\alpha>1/2$.
\end{theorem}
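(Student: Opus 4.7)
The plan is to handle the two halves of the statement by completely different methods: an explicit construction for existence and a Hausdorff-measure scaling argument for the upper bound on the H\"older exponent.

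For the existence of a $1/2$-H\"older Peano curve, I would use a self-similar space-filling construction such as the Hilbert curve. The key feature is the hierarchical matching of scales: at level $n$, partition $[0,1]$ into $4^{n}$ closed subintervals $I^{n}_{1},\dots,I^{n}_{4^{n}}$ of length $4^{-n}$, and partition $[0,1]^{2}$ into $4^{n}$ closed subsquares $Q^{n}_{1},\dots,Q^{n}_{4^{n}}$ of side $2^{-n}$, arranged so that consecutive subsquares share an edge and so that the partition at level $n+1$ refines the one at level $n$ compatibly. Define $F$ as the unique continuous map satisfying $F(I^{n}_{k})\subset Q^{n}_{k}$ for all $n$ and $k$; continuity and surjectivity are standard. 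For the H\"older estimate, given $x,y\in[0,1]$ with $x\neq y$, pick $n$ with $4^{-(n+1)}<|x-y|\leq 4^{-n}$. Then $x$ and $y$ lie in the same or two consecutive intervals $I^{n}_{k}$, $I^{n}_{k+1}$, so $F(x),F(y)$ lie in adjacent subsquares and satisfy $|F(x)-F(y)|\leq C\,2^{-n}\leq C'\sqrt{|x-y|}$.

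For the upper bound, the argument is a direct application of the behaviour of Hausdorff measure under H\"older maps. Suppose for contradiction that $F\colon[0,1]\surj[0,1]^{2}$ is $\alpha$-H\"older with $\alpha>1/2$ and H\"older constant $L$. Set $s=1/\alpha<2$. A standard cover estimate shows that if $g$ is $\alpha$-H\"older then $\mathcal{H}^{s}(g(A))\leq L^{s}\mathcal{H}^{s\alpha}(A)=L^{s}\mathcal{H}^{1}(A)$ for every $A$. Applying this with $A=[0,1]$ gives
\begin{equation*}
\mathcal{H}^{s}\bigl([0,1]^{2}\bigr)=\mathcal{H}^{s}\bigl(F([0,1])\bigr)\leq L^{s}\mathcal{H}^{1}([0,1])=L^{s}<\infty,
\end{equation*}
contradicting the fact that $\mathcal{H}^{s}([0,1]^{2})=\infty$ whenever $s<\dim_{H}[0,1]^{2}=2$.

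I do not expect any single step to present a serious obstacle; the main care is in organising the hierarchical indexing so that the adjacency property of consecutive Hilbert subsquares is transparent, since that is what drives the $1/2$-H\"older bound. Everything else is either definitional or a standard estimate on how H\"older maps distort Hausdorff measure.
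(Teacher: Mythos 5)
The paper does not prove this statement at all: it is imported verbatim as Theorem~3 of \cite{BuckleyPeanoPre} and used as a black box (to show that a space-filling curve need not map a null set onto a set of positive $\mathcal{H}^{2}$-measure), so there is no in-paper argument to compare yours against. On its own terms your two-part proof is correct and is the classical one. For existence, the Hilbert-curve argument works: with $4^{-(n+1)}<|x-y|\leq 4^{-n}$ the points lie in the same or in consecutive level-$n$ intervals, the corresponding subsquares of side $2^{-n}$ are edge-adjacent, and $2^{-n}\leq 2\sqrt{|x-y|}$ yields the $1/2$-H\"older bound; the only places requiring care are the ones you identify, namely that the Hilbert ordering makes consecutive subsquares share an edge at every level and that the level-$(n+1)$ partition refines the level-$n$ one compatibly, so that $F$ is single-valued at quaternary rationals (the exit point of $Q^{n}_{k}$ must equal the entry point of $Q^{n}_{k+1}$). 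For impossibility, the distortion estimate $\mathcal{H}^{s}(g(A))\leq L^{s}\mathcal{H}^{s\alpha}(A)$ for an $\alpha$-H\"older map $g$ with constant $L$, applied with $s=1/\alpha<2$ and $A=[0,1]$, contradicts $\mathcal{H}^{s}([0,1]^{2})=\infty$; this is airtight and is exactly the dimension-monotonicity argument one expects. Buckley's actual paper proves finer results about attainable H\"older exponents, but for the statement as quoted here your argument is a complete, self-contained proof.
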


For related results, see also \cite{Shchepin}, where H\"older continuous surjections between cubes are studied. Other counter examples can be constructed from the following result from \cite{Thread}:
\begin{theorem}[Thread Theorem in \cite{Thread}]
For each $n\geq 2$ there exists a continuous, one-to-one mapping $\varphi\colon \closedopen{0}{1}\to (0,1)^n$
such that $\leb^1(\varphi^{-1}(B))=\leb^n(B)$ for all Borel subsets $B$ of $[0,1]^n$.
\end{theorem}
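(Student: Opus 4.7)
The plan is to build $\varphi$ as the uniform limit of a sequence $(\varphi_k)_{k\ge 0}$ of continuous, injective, piecewise-linear curves obtained by a Peano/Hilbert-type dyadic refinement of $(0,1)^n$, modified so that the curve stays in the interior of every dyadic subcube and threads through it without self-intersection.

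\textbf{Step 1 (dyadic setup).} For every $k\ge 0$, fix a Hilbert-type enumeration $Q^k_1,\dots,Q^k_{2^{nk}}$ of the closed dyadic subcubes of $[0,1]^n$ of side $2^{-k}$ such that $Q^k_j$ and $Q^k_{j+1}$ share a face, and such that the $2^n$ children of each $Q^{k}_j$ at level $k+1$ appear consecutively in the level-$(k+1)$ enumeration. Correspondingly, let $I^k_1,\dots,I^k_{2^{nk}}$ be the natural enumeration of the half-open dyadic subintervals of $[0,1)$ of length $2^{-nk}$. We aim for $\varphi_k(I^k_j)\subset Q^k_j$.

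\textbf{Step 2 (injective threading).} On each $I^k_j$, define $\varphi_k$ to be a simple piecewise-linear path (a ``thread'') lying in the open cube $\mathrm{int}(Q^k_j)$ except for its two endpoints, which lie on the faces of $Q^k_j$ shared with $Q^k_{j-1}$ and $Q^k_{j+1}$. Choose the thread to lie within a tube of radius $\rho_k\ll 2^{-k}$ around some base curve, at distance at least $\delta_k>0$ from the threads inside other same-level subcubes and from the traces of all earlier levels inside $Q^k_j$. The refinement $\varphi_k\to\varphi_{k+1}$ modifies $\varphi_k$ only inside each $Q^k_j$, replacing its thread with a finer one that visits the $2^n$ children of $Q^k_j$ in order while preserving entry and exit points. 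Because each modification remains inside $Q^k_j$, $\|\varphi_k-\varphi_{k-1}\|_\infty\le \sqrt{n}\,2^{-(k-1)}$, and the sequence converges uniformly to a continuous $\varphi\colon [0,1)\to [0,1]^n$. Injectivity at every finite stage, together with the quantitative separation $\delta_k$, passes to the limit, provided $\rho_k$ and $\delta_k$ decay sufficiently slowly.

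\textbf{Step 3 (measure preservation).} For each dyadic subcube $Q^k_j$, the limit satisfies $\varphi(I^k_j)\subset Q^k_j$, while for $j'\ne j$ the set $\varphi(I^k_{j'})\cap Q^k_j$ is contained in the shared face $Q^k_j\cap Q^k_{j'}$, which has $\leb^n$-measure zero and is visited by $\varphi$ at only finitely many parameters. Hence
\[
\leb^1(\varphi^{-1}(Q^k_j))=2^{-nk}=\leb^n(Q^k_j).
\]
The dyadic subcubes form a $\pi$-system generating the Borel $\sigma$-algebra of $[0,1]^n$, so Dynkin's $\pi$--$\lambda$ theorem gives $\leb^1(\varphi^{-1}(B))=\leb^n(B)$ for every Borel $B$. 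In particular $\leb^n(\varphi([0,1)))=1$, so the arc fills $[0,1]^n$ up to a null set; a final perturbation pushes it strictly off the outer boundary $\partial[0,1]^n$ without affecting the measure identity.

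\textbf{Main obstacle.} The crux is Step 2: the threads must be thin enough to avoid self-intersection forever, yet their cumulative image inside each $Q^k_j$ must have asymptotically full $\leb^n$-measure in $Q^k_j$, for otherwise $\varphi([0,1))$ would miss a set of positive $\leb^n$-measure and the measure identity in Step 3 would fail. Quantitatively, one must tune $\rho_k,\delta_k$ (say $\rho_k\asymp 2^{-k}/k$) and the geometric template for how the $2^n$ child threads are joined inside each parent thread so that the nested intersection of the traces has $\leb^n$-density one in every $Q^k_j$. This is essentially the construction of an ``Osgood-type'' simple arc of positive $n$-dimensional Lebesgue measure, glued consistently across levels into a single injective continuous curve; carrying this out in dimension $n$ subject to the strong measure-preservation constraint is the real labour of the proof.
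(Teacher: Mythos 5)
This statement is quoted by the paper from \cite{Thread} without proof, so I can only assess your proposal on its own terms; unfortunately it contains a structural flaw that no amount of tuning of $\rho_k$ and $\delta_k$ can repair. The architecture you propose assigns to each dyadic cube $Q^k_j$ a \emph{single} parameter interval $I^k_j$ with $\varphi(I^k_j)\subset Q^k_j$ and $\leb^1(I^k_j)=\leb^n(Q^k_j)$, and arranges that no other parameters enter $Q^k_j$ except through its boundary. If the measure identity held, then applying it to the compact (hence Borel) set $A:=\varphi(\overline{I^k_j})$, and using injectivity to get $\varphi^{-1}(A)=\overline{I^k_j}$, would force $\leb^n(A)=\leb^1(\overline{I^k_j})=\leb^n(Q^k_j)$; that is, the single sub-arc $\varphi(\overline{I^k_j})$ would have to have \emph{full} measure in $Q^k_j$. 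But for every $j$ except the last one, $\overline{I^k_j}$ is a compact subinterval of $[0,1)$, so $\varphi(\overline{I^k_j})$ is a simple arc: a compact set of topological dimension one, hence with empty interior, hence nowhere dense. A closed nowhere dense subset of $Q^k_j$ cannot have full measure (its complement in $\operatorname{int}Q^k_j$ is a nonempty open set). So within your architecture the set $B=Q^k_j\setminus\varphi(\overline{I^k_j})$ is Borel with $\leb^n(B)>0$ while $\leb^1(\varphi^{-1}(B))=0$, and the theorem fails. This is exactly the obstacle you flag at the end, but it is not a quantitative difficulty about tube radii: an Osgood-type arc can have \emph{positive} measure, never \emph{full} measure, and full measure in every dyadic cube is what the statement demands.

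The conclusion to draw is that any correct construction must abandon the one-interval-per-cube (Hilbert-curve) bookkeeping: for each cube $Q$, the preimage $\varphi^{-1}(Q)$ has to be a closed set with infinitely many connected components of total length $\leb^n(Q)$, so that $\varphi([0,1))\cap Q$ is a countable union of nowhere dense arcs which together carry full measure in $Q$ (a meager set of full measure, which is perfectly possible for an $\mathcal{F}_\sigma$, though not for a single compact arc). Concretely, the thread must enter and leave every region infinitely often, with the parameter set assigned to $Q$ resembling a fat-Cantor-type closed set rather than an interval; the combinatorics of the refinement then has to track, for each cube, an infinite sequence of visits whose lengths sum to the cube's volume. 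Your Steps 1 and 3 (the dyadic generating $\pi$-system and the Dynkin argument) are fine and would survive in such a scheme, but Step 2 as written cannot be completed.
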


For each $n\geq 2$, we obtain a continuous bijection $f\colon \closedopen{0}{1}\to f(\closedopen{0}{1})\subset (0,1)^n$
such that $\leb^1(B)=\leb^n(f(B))$ for every Borel set $B\subset \closedopen{0}{1}$, and thus $\leb^1(N)=0$ if
and only if $\leb^n(f(N))=0$. Note that in the present case $\leb^n(f(\closedopen{0}{1}))>0$.

The following \nref{StandasExample} was explained to the author by
K.~Wildrick, who learned it from S.~Hencl.
\begin{example}\tlabel{StandasExample} If $f\colon [0,1]\surj [0,1]^2$ is a continuous
surjection, then there exists a set \mbox{$N\subset [0,1]$} with
$\mathcal{H}^1(N)=0$ and $\mathcal{H}^1(f(N))>0$. Moreover, $N$ can be chosen to be closed.
\end{example}

\begin{proof}
Suppose that the above statement is not true. Hence, for every set \mbox{$N\subset
[0,1]$} with $\mathcal{H}^1(N)=0$, we obtain
$\mathcal{H}^1(f(N))=0$. Equivalently put, if \mbox{$\mathcal{H}^1(f(E))>0$},
then $\mathcal{H}^1(E)>0$ for all sets $E\subset [0,1]$. To arrive at a contradiction, it suffices to find uncountably many
pairwise disjoint closed sets $E_\alpha$ with
\mbox{$\mathcal{H}^1(f(E_\alpha))>0$}. Our assumption tells us then that
$\mathcal{H}^1(E_\alpha)>0$, and hence
$\mathcal{H}^1([0,1])=\infty$. We can clearly find uncountably
many pairwise disjoint closed sets $F_\alpha\subset [0,1]^2$ with
$\mathcal{H}^1(F_\alpha)>0$. The sets
$E_\alpha:=f^{-1}(F_\alpha)$ do the job.
\end{proof}
\label{ChapHere}
We
\makeatletter%
\@ifundefined{r@ChapHere}{%
will construct}%
{\ifthenelse{\ref{ChapHere}<\ref{ChapSF}}{will construct}{\ifthenelse{\ref{ChapSF}<\ref{ChapHere}}{constructed}{\gruError}}%
}
\makeatother
in \tref{SpaceFilling} continuous surjections. In these constructions, it is easily observed that $N$ can be chosen to be a perfect set. The following result puts this observation into perspective. Note that it implies that each closed set in a $T_1$\nobreakdash-space can be written as union of a perfect and of a countable set.

\begin{theorem}[Cantor-Bendixson, Theorem~\ignoreSpellCheck{XIV}.5.3 in \cite{Kuratowski}]\tlabel{CantorBendixson} Every $T_1$\nobreakdash-space\footnote{A topological space is called a \emph{$T_1$\nobreakdash-space} if each single element is closed.} with a countable base is the union of two disjoint sets, one dense in itself and closed (i.e.\ perfect) and the other countable.
\end{theorem}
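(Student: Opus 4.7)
The plan is to use the notion of condensation points. Call a point $x \in X$ a \emph{condensation point} if every open neighborhood of $x$ contains uncountably many points of $X$. Let $P$ denote the set of condensation points and $C := X \setminus P$. I will show that $P$ is perfect and $C$ is countable.

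First I would verify that $C$ is countable, which is where the countable base $\{U_n\}_{n \in \mathbb{N}}$ is used crucially. By definition, every $x \in C$ has \emph{some} open neighborhood with only countably many points of $X$, and by passing to a smaller neighborhood I may assume it is a basic open set $U_{n(x)}$. Let $\mathcal{I} := \{n \in \mathbb{N} : U_n \text{ is countable}\}$. Then $C \subseteq \bigcup_{n \in \mathcal{I}} U_n$, which is a countable union of countable sets, hence countable.

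Next I would show $P$ is closed, equivalently that $C$ is open. Pick $x \in C$ with a countable open neighborhood $V$; every $y \in V$ inherits $V$ as a countable neighborhood, so $V \subseteq C$. Finally, I would show $P$ is dense in itself. Let $x \in P$ and let $V$ be any open neighborhood of $x$. Then $V$ contains uncountably many points of $X$, while $V \cap C$ is a subset of a countable set, hence countable. Therefore $V \cap P$ is uncountable, and in particular contains a point different from $x$, so $x$ is a limit point of $P$. (Here the $T_1$ hypothesis enters only to make sure the standard notion of \emph{perfect} lines up with \emph{closed and dense in itself}; it is not needed in the combinatorial part of the argument.)

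The main obstacle, if any, is purely bookkeeping: making sure the countable base is invoked to bound $C$ rather than arguing from arbitrary neighborhoods (one cannot simply take the union of the bad neighborhoods $V_x$ themselves, since there may be uncountably many). Everything else is a direct consequence of the definition of condensation point.
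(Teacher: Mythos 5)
The paper states this as a cited classical result (Theorem~XIV.5.3 in Kuratowski) and gives no proof of its own, so there is nothing to compare against; your condensation-point argument is the standard proof of Cantor--Bendixson and is correct. Each step checks out: the countable base bounds $C$ by a countable union of countable basic sets, $C$ is open since a countable neighborhood witnesses non-condensation for all of its points, and the countability of $C$ forces every neighborhood of a condensation point to meet $P$ in an uncountable set --- which in fact gives the paper's notion of perfect (``each open set that meets it, meets it in an infinite set'') directly, so the $T_1$ hypothesis is not even needed for your argument, only for the equivalence of the various formulations of ``perfect.''
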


\textbf{Structure of the paper} The paper can roughly be divided into three parts. We start in Section~\ref{Notation and background} with notation, the measure theoretic background, and some properties of metric spaces. The main content of Section~\ref{Abstract Example} is to give an abstract version of \tref{StandasExample}. An important property that we need in this abstract result is the existence of a certain amount of pairwise disjoint sets. In Section~\ref{First criterion}, we give a first result guaranteeing the needed amount of pairwise disjoint subsets. The following section concludes the first part of the article by providing a first result about blowing up sets.

Unlike in the first part, where we focussed on conditions on $Y$ to obtain sets that are blown up, in the second part, we impose more conditions on $X$. We do this by searching conditions such that $X$ can be written as union of compact sets with finite measure and a set of measure zero, such that the restrictions of the space-filling under question to the compact sets are continuous. In Section~\ref{Topology}, we turn our focus to such a partition of $X$. In Section~\ref{From measurable to continuous}, our thoughts center around Luzin's theorem to improve above partition such that the restriction of the mapping to the sets of positive measure are continuous. This second part is concluded with the proof of \tref{MainInfinite} in Section~\ref{Section proof of main result}.

The following sections constitute the last part. They complement the findings in the first two parts. Having assumed the existence of space-fillings in the first two parts, we give in Section~\ref{Section space fillings} a result showing that there are plenty of space-fillings. Section~\ref{Space fillings and condition N} complements the existence of blown up sets by considering space-fillings that do satisfy Luzin's condition~(N). In some respect, these results show the sharpness of some of the assumptions in our main results. We conclude the article with Section~\ref{Applications}, where we describe some applications of our results.

\textbf{Acknowledgements:}  I would like to thank Kevin Wildrick, Pekka Koskela, Pertti Mattila, Tapio Rajala, Ville Tengvall, and Jeremy Tyson for stimulating discussions about the subject of the article. Thanks also go to Stanislav Hencl, to whom \tref{StandasExample} goes back. I am grateful to the people in Jyv\"asklyl\"a and Helsinki who attended my talks about the subject, and to the Department of Mathematics and Statistics in Jyväskylä and the Mathematical Institute in Bern. This work was supported by the Swiss National Science Foundation grant PBBEP3\_130157 and the Academy of Finland grant 251650. I thank for the received support.

\section{Notation, measure theoretic background, and some properties of metric spaces}\label{Notation and background}
In this section, we lay out some measure theoretic facts. As
sources, we mainly use \cite{Rogers} and \cite{Howroyd} as
reference for the Hausdorff measures and D.~H.~Fremlin's opus on measure
theory \cite{Fremlin}.

We denote by \textbf{measure} what some other authors, Fremlin for example, call
\textbf{outer measure}. If we cite a result from one of his volumes, then we
replace a possible occurrence of a $\sigma$\nobreakdash-algebra by
the $\sigma$\nobreakdash-algebra of the measurable sets.
\begin{definition}[measure, $\sigma$\nobreakdash-finite, (Borel) measurable]\tlabel{measureDef}
Let $X$ be a set.
\begin{itemize}
\item[(a)] If $\mu\colon \mathcal{P}(X)\to [0,\infty]$ is such that
\begin{itemize}
\item[$\bullet$] $\mu(\emptyset)=0$,
\item[$\bullet$] $\mu(A)\leq \sum_{i=1}^\infty \mu(A_i)$ if $A\subset
\cup_{i=1}^\infty A_i$,
\end{itemize}
then we say that $\mu$ is a \emph{measure}, and we call $(X,\mu)$ a
\emph{measure space}.
\item[(b)] If $\mu$ is a measure on $X$, a set $M$ is said to be \emph{$\mu$\nobreakdash-measurable}\footnote{If the measure under consideration is clear, then we also speak simply of \emph{measurable}. We will also use the equivalent formulation that $M$ is measurable if and only if for every set $F$, we have $\mu(F)=\mu(F\cap M)+\mu(F\setminus M)$.} if for all sets $A$, $B$ with $A\subset M$ and $B\subset X\setminus M$, we have $\mu(A\cup B)=\mu(A)+\mu(B)$. We say that $\mu$ is \emph{Borel measurable} if each Borel set is measurable.
\item[(c)] We say that a set $S$ in a measure space $(X,\mu)$ is \emph{$\sigma$\nobreakdash-finite} with respect to $\mu$ if $S$ can be written as countable union of measurable subsets with finite measure.
\item[(d)] Let $(X,\mu)$ be a measure space and
$(Y,\mathfrak{S})$ a topological space. We say that a mapping
$f\colon X\to Y$ is \emph{measurable} if for any open set $O\in \mathfrak{S}$, its preimage
$f^{-1}(O)$ is measurable; if $f^{-1}(O)$ is a Borel set, we say that $f$ is \emph{Borel measurable}.
\end{itemize}
\end{definition}

The following fact can be found for example in Section~112C in
\cite{Fremlin}.
\begin{lemma}\tlabel{measureSup} Let $(X,\mu)$ be a measure space. If
$(M_n)_n$ is a non-decreasing sequence of measurable sets (that
is, $M_n\subset M_{n+1}$, $n\in \field{N}$), then
\begin{equation*}
\mu(\cup_n M_n)=\lim_{n\to \infty} \mu(M_n)=\sup_n \mu(M_n).
\end{equation*}
\end{lemma}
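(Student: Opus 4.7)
The plan is to reduce the statement to two standard steps: monotonicity gives one inequality for free, and a disjoint decomposition combined with countable subadditivity and measurability gives the other.

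First I would note that monotonicity of the sequence $(M_n)$ forces $(\mu(M_n))$ to be non-decreasing. This fact together with countable subadditivity (taking all but one set to be empty) shows $\mu$ is monotone on subsets, from which $\lim_n \mu(M_n) = \sup_n \mu(M_n)$ and $\sup_n \mu(M_n) \le \mu(\bigcup_n M_n)$ are immediate. It remains to prove the reverse inequality $\mu(\bigcup_n M_n) \le \sup_n \mu(M_n)$.

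For this I would use the standard trick of disjointifying: set $A_1 := M_1$ and $A_n := M_n \setminus M_{n-1}$ for $n \ge 2$, so that $\bigcup_n M_n = \bigcup_n A_n$. Since each $M_{n-1}$ is measurable in the sense of \tref{measureDef}(b), and since $M_{n-1} \subset M_{n-1}$ and $A_n \subset X \setminus M_{n-1}$, the definition of measurability applied to these two sets yields
\begin{equation*}
\mu(M_n) \;=\; \mu(M_{n-1} \cup A_n) \;=\; \mu(M_{n-1}) + \mu(A_n).
\end{equation*}
This is the key computation. Iterating gives the telescoping identity $\sum_{k=1}^n \mu(A_k) = \mu(M_n)$, and countable subadditivity applied to the cover $\bigcup_n M_n \subset \bigcup_n A_n$ yields
\begin{equation*}
\mu\Bigl(\bigcup_n M_n\Bigr) \;\le\; \sum_{n=1}^\infty \mu(A_n) \;=\; \lim_{n\to\infty} \mu(M_n).
\end{equation*}

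Finally I would handle the case where some $\mu(M_k) = \infty$ separately: there both sides of the claimed equality are infinite by monotonicity, so nothing needs to be shown; otherwise all $\mu(M_n)$ are finite and the subtraction $\mu(A_n) = \mu(M_n) - \mu(M_{n-1})$ is legitimate, making the telescoping rigorous. The only potential pitfall is correctly invoking the paper's definition of measurability (formulated via $\mu(A\cup B) = \mu(A)+\mu(B)$ for $A\subset M$, $B\subset X\setminus M$) rather than the more familiar Carath\'eodory formulation, but the two are equivalent and the disjoint decomposition above is tailored precisely to this form.
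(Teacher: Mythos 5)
Your proof is correct, and it is worth noting that the paper itself gives no argument at all for this lemma: it simply points to Section~112C of Fremlin's treatise (which states the result for measures on $\sigma$-algebras; the paper's standing convention is to read such statements with the $\sigma$-algebra of Carath\'eodory-measurable sets). So your write-up supplies a proof where the paper supplies only a citation, and it is the standard one: monotonicity gives $\sup_n\mu(M_n)\le\mu(\cup_n M_n)$, and the disjointification $A_n=M_n\setminus M_{n-1}$ together with the measurability of $M_{n-1}$ (applied exactly as you do, with $A=M_{n-1}$ and $B=A_n\subset X\setminus M_{n-1}$ in the paper's Definition of measurable) gives $\mu(M_n)=\mu(M_{n-1})+\mu(A_n)$, hence $\mu(\cup_n M_n)\le\sum_n\mu(A_n)=\lim_n\mu(M_n)$ by countable subadditivity. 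One small remark: your final paragraph about splitting off the case $\mu(M_k)=\infty$ is unnecessary. The identity $\sum_{k=1}^n\mu(A_k)=\mu(M_n)$ follows by forward induction from $\mu(M_n)=\mu(M_{n-1})+\mu(A_n)$, which is an addition valid in $[0,\infty]$; no subtraction ever occurs, so no finiteness hypothesis is needed. The case split is harmless but can be deleted.
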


We take the following definition from Definition~9 in \cite{Rogers} and
411B and 411D in \cite{Fremlin}:
\begin{definition}[$\mathcal{R}$\nobreakdash-regular, inner regular, and outer regular]
If $\mathcal{R}$ is a class of sets, a measure $\mu$ is said to be
\begin{itemize}
\item[(a)] \emph{$\mathcal{R}$\nobreakdash-regular}, if for each $E$ in
$X$ there is a set $R$ in $\mathcal{R}$ with $E\subset R$ and
$\mu(E)=\mu(R)$,
\item[(b)] \emph{inner regular with respect to $\mathcal{R}$} if
\begin{equation*}
\mu(M)=\sup\{\mu(R):\, R\in \mathcal{R},\, R\subset M,\, \text{and
$R$ measurable}\}
\end{equation*}
for every measurable set $M$,
\item[(c)] \emph{outer regular with respect to $\mathcal{R}$} if
\begin{equation*}
\mu(M)=\inf\{\mu(R): R\in \mathcal{R},\, R\supset M,\, \text{and
$R$ measurable}\}
\end{equation*}
for every measurable set $M$.
\end{itemize}
\end{definition}

\begin{definition}[premeasure, finite order]
A \emph{premeasure} $\xi$ on $Y$ is a function  mapping the
subsets of $Y$ to the non-negative reals satisfying
\begin{itemize}
\item[(a)] $\xi(\emptyset)=0$,
\item[(b)] if $U\subset V$ then $\xi(U)\leq \xi(V)$ for all $U,\,
V\subset Y$.
\end{itemize}
We will say that the premeasure $\xi$ is of \emph{finite order} if
and only if for some constant $\eta$, we have
\begin{itemize}
\item[(c)] $\xi(\widehat{U})\leq \eta \xi(U)$ for all $U\subset Y$,
\item[(d)] $\inf\{\xi(B(y,\delta)):\, \delta>0\}\leq \eta\xi(\{y\})$
for all $y\in Y$,
\end{itemize}
where
\begin{equation*}
\widehat{U}=\bigcup\{E\subset Y:\, E\cap U\not=\emptyset\;
\text{and $\diam E\leq \diam U$}\}
\end{equation*}
and $B(y,\delta)$ denotes the open ball with center $y$ and radius
$\delta$.
\end{definition}

\begin{definition}[$\delta$\nobreakdash-cover]
We say that a sequence $(U_i)_i$ of subsets of $Y$ is a
\emph{$\delta$\nobreakdash-cover} of a set $E$ if and only if
$E\subset \cup U_i$ and
\begin{equation*}
\diam U_i\leq \delta,\ i\in \field{N}.
\end{equation*}
We use $\Omega_\delta(E)$ to denote the family of all such
(countable) $\delta$\nobreakdash-covers of $E$.
\end{definition}

\begin{definition}[Hausdorff measure]
The measures $\Lambda^\xi_\delta$ are defined for $\delta>0$ by
\begin{equation*}
\Lambda^\xi_\delta(E)=\inf\{\sum_{i=1}^\infty \xi(U_i):\,
(U_i)_i\in \Omega_\delta(E)\},
\end{equation*}
with the convention that $\inf \emptyset=\infty$. The
\emph{Hausdorff $\xi$\nobreakdash-measure} $\Lambda^\xi$ is then
defined as $\Lambda^\xi(E):=\sup_{\delta>0} \Lambda^\xi_\delta(E)$.
\end{definition}

\begin{definition}[Hausdorff function, finite order]\tlabel{Def Hausdorff}
A function $h$, defined for all non-negative real numbers, is a
\emph{Hausdorff function} if and only if the following conditions
are satisfied:
\begin{enumerate}
\item[(a)] $h(t)>0$ for all $t>0$,
\item[(b)] $h(t)\geq h(s)$ for all $t\geq s$,
\item[(c)] $h$ is continuous from the right for all $t\geq 0$.
\end{enumerate}
For such a function and a positive constant $\Theta$, we define a
premeasure, $\xi$ say, on $X$ by
\begin{equation}\label{definitionOfXi}
\begin{split}
\xi(U)&=\min\{h(\diam U),h(\Theta)\},\quad U\not=\emptyset,\\
\xi(\emptyset)&=0.
\end{split}
\end{equation}
If the premeasure $\xi$ is defined by some Hausdorff function
$h$, we will use $\mathcal{H}^h$ for $\Lambda^\xi$. If
\begin{equation*}
\limsup_{t\to 0+} \frac{h(3t)}{h(t)}<\infty,
\end{equation*}
we say that $h$ is \emph{of finite order}.
\end{definition}

\begin{remark}$ $
\begin{itemize}
\item The constant $\Theta$ ensures that the premeasure assigns finite
values to all sets. We can allow for Hausdorff functions that admit the value $\infty$ if it is possible to choose $0<\Theta$ so small that $h(t)<\infty$ for $t\leq \Theta$.
\item If $h$ is of finite order, then the induced premeasure is of
finite order for small enough $\Theta$. When we speak of finite
order, then we assume that $\Theta$ has been chosen to be so small
that $\xi$ is of finite order.
\item  Assume that $h$ is of finite order, continuous from the right in $0$, increasing, $h(t)>0$ for $t>0$, and $h(0)=0$. If we set $H(0)=0$ and for $t>0$
\begin{equation*}
H(t):=\frac{1}{t}\int_t^{2t} h(s)\, ds,
\end{equation*}
then $H$ is a continuous Hausdorff function of finite order comparable to $h$ for small arguments, see for example Section~1 in \cite{Edgar} for more information.
\end{itemize}
\end{remark}

We take the definition of the Hausdorff-Besicovitch dimension from Section~4 in \cite{Howroyd}.
\begin{definition}[Hausdorff-Besicovitch dimension]
We define the \emph{Hausdorff-Besicovitch dimension}\footnote{It is also known as \emph{Hausdorff dimension}. Sometimes we also simply omit both, Hausdorff and Besicovitch, and just talk about dimension.} of a metric space, $(X,d)$, to be the supremum of all non-negative $s$ for which $\Lambda^{h(s)}>0$, where $h(s)$ is defined on all non-negative $t$ by $h(s)(t)=t^s$. We denote the Hausdorff-Besicovitch dimension of $X$ by $\dim_H(X)$.
\end{definition}

We can define a partial strict ordering on the set of Hausdorff functions:
\begin{definition}
We say that
\begin{equation*}
g \prec h
\end{equation*}
for two Hausdorff functions $g$ and $h$ if
\begin{equation*}
\lim_{t\to 0+} \frac{h(t)}{g(t)}=0.
\end{equation*}
\end{definition}

The following \nref{Comparability} is stated on p.~79 in \cite{Rogers} as corollary and enables us to talk about sort of \emph{generalized dimension}.
\begin{lemma}\tlabel{Comparability}
Let $f,\, g$, and $h$ be Hausdorff functions with $f\prec g \prec h$. If a set $E$ in a metric space has $\sigma$\nobreakdash-finite positive $\mathcal{H}^g$\nobreakdash-measure, then $E$ has zero $\mathcal{H}^h$\nobreakdash-measure and non\nobreakdash-$\sigma$\nobreakdash-finite $\mathcal{H}^f$\nobreakdash-measure.
\end{lemma}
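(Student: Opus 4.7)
The plan is to reduce the whole statement to a single auxiliary lemma: if $g \prec h$ and $A$ has $\sigma$-finite $\mathcal{H}^g$-measure, then $\mathcal{H}^h(A)=0$. Applying this with the pair $(g,h)$ to $E$ itself gives the first conclusion $\mathcal{H}^h(E)=0$. For the second conclusion, I would argue by contradiction: suppose $E=\bigcup_n F_n$ with $\mathcal{H}^f(F_n)<\infty$. Applying the lemma to each $F_n$ with the pair $(f,g)$ (we have $f\prec g$) yields $\mathcal{H}^g(F_n)=0$, and countable subadditivity then gives $\mathcal{H}^g(E)=0$, contradicting the assumption that $\mathcal{H}^g(E)>0$.

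To prove the auxiliary lemma, by countable subadditivity of $\mathcal{H}^h$ it suffices to treat a single piece $A$ with $\mathcal{H}^g(A)<\infty$. Fix $\varepsilon>0$. Since $g\prec h$ means $h(t)/g(t)\to 0$ as $t\to 0+$, I can pick $\delta_0>0$ so small that $h(t)\leq \varepsilon\, g(t)$ for every $t\in (0,\delta_0]$, and so small that $\delta_0$ lies below the thresholds $\Theta_g$ and $\Theta_h$ used to cap the two premeasures. For any $\delta\in(0,\delta_0]$ and any $\delta$-cover $(U_i)_i$ of $A$, the premeasures are then just $\xi_g(U_i)=g(\diam U_i)$ and $\xi_h(U_i)=h(\diam U_i)$, so $\sum_i \xi_h(U_i)\leq \varepsilon\sum_i \xi_g(U_i)$. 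Taking the infimum over $\delta$-covers gives $\Lambda^{\xi_h}_\delta(A)\leq \varepsilon\,\Lambda^{\xi_g}_\delta(A)\leq \varepsilon\,\mathcal{H}^g(A)$, and letting $\delta\to 0+$ produces $\mathcal{H}^h(A)\leq \varepsilon\,\mathcal{H}^g(A)$. Since $\varepsilon$ was arbitrary and $\mathcal{H}^g(A)<\infty$, we conclude $\mathcal{H}^h(A)=0$.

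The only delicate point is the cover-wise pointwise comparison of $\xi_h$ with $\xi_g$ on sets whose diameter could be zero (for instance singletons, which are legitimate members of a $\delta$-cover). Here the right-continuity in \tref{Def Hausdorff} together with $g\prec h$ forces $h(0)=0$ whenever $g(0)=0$, and more generally guarantees that the quotient $h/g$ is well-behaved down to $t=0$ after the right-continuous extension, so that the inequality $\xi_h(U)\leq \varepsilon\,\xi_g(U)$ also applies to sets with $\diam U=0$; this is the main obstacle, but it is truly a bookkeeping matter rather than a conceptual one. Everything else is a transparent manipulation of definitions, and the asymmetry between \textbf{zero measure} (statement one) and \textbf{non-$\sigma$-finite measure} (statement two) is absorbed cleanly by the contrapositive/contradiction step outlined above.
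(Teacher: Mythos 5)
Your proof is correct. There is no in-paper argument to compare against: the paper does not prove this lemma but simply quotes it as a corollary from p.~79 of Rogers' book, and your two-step reduction (the comparison estimate $\Lambda^{\xi_h}_\delta(A)\le \varepsilon\,\Lambda^{\xi_g}_\delta(A)$ for $\delta$ below the relevant thresholds, applied once to the pair $(g,h)$ for the null-measure conclusion and once, via contradiction, to the pair $(f,g)$ for the non-$\sigma$-finiteness conclusion) is exactly the standard argument one finds there. Your treatment of the two technical points --- choosing $\delta_0$ below $\Theta_g$ and $\Theta_h$ so the premeasures are genuinely $g(\mathrm{diam}\, U)$ and $h(\mathrm{diam}\, U)$, and the diameter-zero covering sets, where $g\prec h$ together with monotonicity and right-continuity forces $h(0)=0$ so the pointwise inequality survives --- is also sound.
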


Theorem~27 on p.~50 in \cite{Rogers} tells us about
regularity properties of Hausdorff measures:
\begin{theorem}\tlabel{RogersRegular}
A Hausdorff measure $\mathcal{H}^h$ is a regular,
$\mathcal{G}_\delta$\nobreakdash-regular\footnote{A set is termed $\mathcal{G}_\delta$ if it can be written as countable intersection of open sets. If it has a representation as countable union of closed sets, then it is called $\mathcal{F}_\sigma$\nobreakdash-set.} metric\footnote{A metric measure $\mu$ is such that $\mu(A\cup B)=\mu(A)+\mu(B)$, whenever $A$ and $B$ have a positive distance.} measure, all Borel
sets are $\mathcal{H}^h$\nobreakdash-measurable, and each
$\mathcal{H}^h$\nobreakdash-measurable set of finite
$\mathcal{H}^h$\nobreakdash-measure contains an
$\mathcal{F}_\sigma$\nobreakdash-set with the same measure.
\end{theorem}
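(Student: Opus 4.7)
The plan is to verify the assertions in the order listed: first the metric property (which yields Borel measurability via Carath\'eodory's criterion), then $\mathcal{G}_\delta$-regularity (which subsumes ordinary regularity), and finally the $\mathcal{F}_\sigma$-inner approximation for measurable sets of finite measure.

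First, to show $\Lambda^\xi$ is a metric measure, suppose $A,\,B\subset Y$ with $d(A,B)\ge \rho>0$ and fix $\delta\in (0,\rho)$. Any $(U_i)_i\in\Omega_\delta(A\cup B)$ splits into those sets meeting $A$ and those meeting $B$, since no $U_i$ of diameter at most $\delta<\rho$ can intersect both. This gives $\Lambda^\xi_\delta(A)+\Lambda^\xi_\delta(B)\le \sum_i\xi(U_i)$; taking the infimum and then $\delta\to 0^+$ yields $\Lambda^\xi(A)+\Lambda^\xi(B)\le \Lambda^\xi(A\cup B)$, while the reverse is subadditivity. Carath\'eodory's theorem then forces every Borel set to be $\Lambda^\xi$-measurable.

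Second, for $\mathcal{G}_\delta$-regularity, given $E\subset Y$ and $n\in\field{N}$, pick $(U^{(n)}_i)_i\in\Omega_{1/n}(E)$ with $\sum_i\xi(U^{(n)}_i)\le \Lambda^\xi_{1/n}(E)+1/n$. Using right-continuity of $h$, thicken each $U^{(n)}_i$ to an open neighborhood $V^{(n)}_i:=\{x\in Y:d(x,U^{(n)}_i)<\epsilon_{n,i}\}$, choosing $\epsilon_{n,i}>0$ so that $\diam V^{(n)}_i<2/n$ and $\xi(V^{(n)}_i)\le \xi(U^{(n)}_i)+2^{-i}/n$. Setting $W_n:=\bigcup_i V^{(n)}_i$ and $G:=\bigcap_n W_n$ produces a $\mathcal{G}_\delta$ containing $E$. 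For any $\delta>0$ and any $n$ with $2/n<\delta$, the family $(V^{(n)}_i)_i$ is a $\delta$-cover of $G\subset W_n$, whence $\Lambda^\xi_\delta(G)\le \mathcal{H}^h(E)+2/n$. Sending $n\to\infty$ (with $2/n<\delta$) gives $\Lambda^\xi_\delta(G)\le \mathcal{H}^h(E)$ for every $\delta>0$, so $\mathcal{H}^h(G)\le \mathcal{H}^h(E)$ and thus equality by monotonicity. Ordinary regularity follows since $\mathcal{G}_\delta$ sets are Borel and hence measurable.

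Third, for the $\mathcal{F}_\sigma$-approximation, let $E$ be $\mathcal{H}^h$-measurable with $\mathcal{H}^h(E)<\infty$. Apply the previous step twice: find a $\mathcal{G}_\delta$ set $G\supset E$ with $\mathcal{H}^h(G)=\mathcal{H}^h(E)$ and a $\mathcal{G}_\delta$ set $H\supset G\setminus E$ with $\mathcal{H}^h(H)=0$. Then $E^\ast:=G\setminus H$ is Borel, $E^\ast\subset E$, and $\mathcal{H}^h(E^\ast)=\mathcal{H}^h(E)$. Consider the finite Borel measure $\nu(A):=\mathcal{H}^h(A\cap G)$ on $Y$. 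A Dynkin-type argument (the collection of Borel sets admitting closed inner and open outer approximations within $\nu$-measure $\epsilon$ is a $\sigma$-algebra, and closed $F$ lie in it because $F=\bigcap_n\{x:d(x,F)<1/n\}$ and $\nu$ is continuous from above) shows $\nu$ is inner regular by closed sets. Applied to $E^\ast$, this produces closed $C_n\subset E^\ast$ with $\mathcal{H}^h(E^\ast\setminus C_n)=\nu(E^\ast\setminus C_n)<1/n$, and $F:=\bigcup_n C_n$ is an $\mathcal{F}_\sigma$ subset of $E$ with $\mathcal{H}^h(E\setminus F)=\mathcal{H}^h(E\setminus E^\ast)+\mathcal{H}^h(E^\ast\setminus F)=0$.

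The main obstacle is the $\mathcal{F}_\sigma$-step, which cannot simply be extracted from $\mathcal{G}_\delta$-regularity by complementation (since $\mathcal{G}_\delta\setminus\mathcal{G}_\delta$ need not be $\mathcal{F}_\sigma$): one must first reduce to a Borel subset $E^\ast\subset E$ of full measure and then run the Dynkin argument against a finite Borel measure restricted to the nice superset $G$. The thickening in the $\mathcal{G}_\delta$-step, where right-continuity of $h$ is essential to keep $\xi(V^{(n)}_i)$ close to $\xi(U^{(n)}_i)$, is the other place where care is needed.
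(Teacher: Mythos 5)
The paper does not prove this statement at all: it is quoted as Theorem~27 on p.~50 of \cite{Rogers}, so there is no internal argument to compare against. Your proof is correct and is essentially the classical one that Rogers' treatment is based on: splitting a $\delta$-cover of $A\cup B$ for $\delta$ below the separation distance gives the metric property and hence Borel measurability via Carath\'eodory's criterion; open thickening of an almost-optimal $1/n$-cover, with right-continuity of $h$ controlling $\xi$ of the enlarged sets, gives $\mathcal{G}_\delta$-regularity; and the reduction to a Borel kernel $E^\ast=G\setminus H\subset E$ of full measure, followed by inner regularity by closed sets of the finite Borel measure $\nu(\cdot)=\mathcal{H}^h(\cdot\cap G)$, gives the $\mathcal{F}_\sigma$-statement. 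You correctly identify that the $\mathcal{F}_\sigma$ part cannot be obtained by naive complementation of the $\mathcal{G}_\delta$ part; the detour through $E^\ast$ is the standard fix and you need the finiteness of $\mathcal{H}^h(E)$ exactly there (to conclude $\mathcal{H}^h(G\setminus E)=0$ and to run the closed-set approximation for a finite measure). Two small points should be made explicit: in the $\mathcal{G}_\delta$ step, dispose first of the case $\mathcal{H}^h(E)=\infty$ by taking $G=Y$, since then $\Omega_{1/n}(E)$ may be empty and the near-optimal cover bound is vacuous; and discard empty members of the covers before thickening, noting also that $\xi$ is capped at $h(\Theta)$ so the right-continuity estimate is only needed when $h(\diam U_i^{(n)})<h(\Theta)$. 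Neither affects the substance of the argument.
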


Let us turn our attention towards metric spaces. The aim is
to introduce the needed definitions in order to state Howroyd's
result on the existence of disjoint subsets of positive measure.
\begin{definition}[analytic, Souslin]
A Hausdorff space is \emph{analytic}, also called \emph{Souslin}, if it is
either empty or a continuous image of $\field{N}_0^{\field{N}_0}$.
\end{definition}
\begin{definition}[Souslin's operation]
Let $S$ be the set $\cup_k \field{N}_0^k$. If $\mathcal{E}$ is a
family of sets, we write $\mathbf{S}(\mathcal{E})$ for the
family of sets expressible in the form\footnote{As Fremlin in \cite{Fremlin}, we can regard a member of $\field{N}_0$ as the set of its predecessors, so that $\field{N}_0^k$ can be identified with the set of functions from $k$ to $\field{N}_0$, and if $\phi\in \field{N}_0^{\field{N}_0}$ and $k\in \field{N}_0$, we can speak of the restriction $\phi\trestriction k\in\field{N}_0^k$.}
\begin{equation*}
\bigcup_{\phi\in \field{N}_0^{\field{N}_0}} \bigcap_{k\geq 1} E_{\phi\trestriction_k}
\end{equation*}
for some family $(E_\sigma)_{\sigma \in S}$ in $\mathcal{E}$.

A family $(E_\sigma)_{\sigma \in S}$ is called a \emph{Souslin
scheme}; the operation
\begin{equation*}
(E_\sigma)_{\sigma\in S}\mapsto \bigcup_{\phi\in
\field{N}_0^{\field{N}_0}}\bigcap_{k\geq 1} E_{\phi\trestriction_k}
\end{equation*}
is \emph{Souslin's operation}. Thus $\mathbf{S}(\mathcal{E})$ is
the family of sets obtainable from sets in $\mathcal{E}$ by
Souslin's operation. If $\mathcal{E}=\mathbf{S}(\mathcal{E})$,
we say that $\mathcal{E}$ is \emph{closed under Souslin's
operation}.
\end{definition}

\begin{definition}[Polish space]
A topological space $X$ is \emph{Polish} if it is separable and its
topology can be defined from a metric under which $X$ is complete.
\end{definition}

\begin{remark} By 423B in \cite{Fremlin}, Polish spaces are analytic.
\end{remark}

\begin{definition}[Souslin-F]
Let $X$ be a topological space. A subset of $X$ is a
\emph{Souslin-F} set in $X$ if it is obtainable from closed
subsets of $X$ by Souslin's operation; that is, it is the
projection of a closed subset of $\field{N}_0^{\field{N}_0}\times X$.

For a subset of $\field{R}^n$, or, more generally, of any Polish
space, it is common to say \lq Souslin set\rq\ for \lq Souslin-F
set\rq.
\end{definition}

The next \nref{Souslin} is a version of
\cite[Theorem~423E]{Fremlin}.
\begin{theorem}\tlabel{Souslin}
Let $(X,\mathfrak{T})$ be an analytic Hausdorff space. For a
subset $A$ of $X$, the following are equivalent:
\begin{itemize}
\item[(i)] $A$ is analytic,
\item[(ii)] $A$ is Souslin-F,
\item[(iii)] $A$ can be obtained by Souslin's operation from the
family of Borel subsets of $X$.
\end{itemize}
\end{theorem}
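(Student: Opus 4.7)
The plan is to prove the cycle of implications $(\mathrm{ii})\Rightarrow(\mathrm{iii})\Rightarrow(\mathrm{i})\Rightarrow(\mathrm{ii})$. The first implication is immediate: closed sets are Borel, so every Souslin-F scheme is a Souslin scheme of Borel sets. The real content lies in the other two implications.

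For $(\mathrm{iii})\Rightarrow(\mathrm{i})$, I would split the argument into two lemmas proved separately. First, every Borel subset of an analytic Hausdorff space is analytic. Because $X$ is a continuous image of $\field{N}_0^{\field{N}_0}$, open subsets of $X$ are analytic (their preimages are open in Baire space and hence again continuous images of $\field{N}_0^{\field{N}_0}$), and the class of analytic subsets is closed under countable unions and countable intersections, so a monotone-class style argument generates all Borel sets inside the analytic class. Second, the class of analytic subsets of any Hausdorff space is closed under Souslin's operation: given a scheme $(E_\sigma)_{\sigma\in S}$ with $E_\sigma=h_\sigma(\field{N}_0^{\field{N}_0})$ for continuous maps $h_\sigma$, one builds a single continuous map on a closed subspace of $\field{N}_0^{\field{N}_0}\times\prod_\sigma \field{N}_0^{\field{N}_0}$ that records a witnessing path $\phi\in\field{N}_0^{\field{N}_0}$ together with preimages witnessing $x\in E_{\phi\trestriction_k}$ for every $k$, and then uses that this closed subspace is itself Polish and hence a continuous image of $\field{N}_0^{\field{N}_0}$. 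Combining these two facts gives the implication.

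The bulk of the work is $(\mathrm{i})\Rightarrow(\mathrm{ii})$. Given analytic $A\subset X$, write $A=g(\field{N}_0^{\field{N}_0})$ for some continuous $g$, and for each $\sigma\in S$ let $B_\sigma=\{\phi\in\field{N}_0^{\field{N}_0}:\phi\trestriction_{|\sigma|}=\sigma\}$. The candidate Souslin-F scheme is $F_\sigma=\overline{g(B_\sigma)}$ (closure in $X$), and the claim is
\begin{equation*}
A=\bigcup_{\phi\in\field{N}_0^{\field{N}_0}}\bigcap_{k\geq 1} F_{\phi\trestriction_k}.
\end{equation*}
The inclusion $\subset$ is automatic from $g(\phi)\in g(B_{\phi\trestriction_k})\subset F_{\phi\trestriction_k}$. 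The delicate direction, and the place I expect the main obstacle, is $\supset$: if $x\in\bigcap_k F_{\phi\trestriction_k}$ one must produce a \emph{single} $\psi\in\field{N}_0^{\field{N}_0}$ with $g(\psi)=x$. The natural attempt is to select $\psi_k\in B_{\phi\trestriction_k}$ with $g(\psi_k)$ approaching $x$ and to observe that $\psi_k\to\phi$ in Baire space by construction, so that continuity of $g$ combined with the Hausdorff property forces $g(\phi)=x$; but this requires a notion of \textquotedblleft approach\textquotedblright\ in $X$, which is why one ultimately works inside the graph $\{(\psi,g(\psi))\}\subset\field{N}_0^{\field{N}_0}\times X$ (a Polish, hence metrizable, subspace when $X$ is analytic Hausdorff) and carries out the selection there. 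Passing back to $X$ via the projection completes the proof, and identification of the three descriptions in Souslin scheme form then yields $(\mathrm{iii})$ for free, closing the cycle.
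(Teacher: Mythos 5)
First, a point of reference: the paper does not prove this theorem at all — it is quoted as ``a version of Theorem~423E in \cite{Fremlin}'' and used as a black box — so there is no internal proof to compare against; what follows measures your outline against the standard argument, which is essentially what you are reconstructing. Your cycle of implications, your two lemmas for (iii)$\Rightarrow$(ii i), and your choice of Souslin scheme are all the right ones. One caveat on (iii)$\Rightarrow$(i): a ``monotone-class style argument'' that starts from the open sets and closes under countable unions and intersections does not, in a general topological space, reach every Borel set; you must instead observe that both open and closed subsets of an analytic Hausdorff space are analytic and then run the argument on the family $\{E:\ E\ \text{and}\ X\setminus E\ \text{both analytic}\}$, which is an honest $\sigma$\nobreakdash-algebra containing the open sets.

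The genuine gap is exactly where you locate it, namely the inclusion $\supset$ in
\begin{equation*}
A=\bigcup_{\phi\in\field{N}_0^{\field{N}_0}}\bigcap_{k\geq 1}\overline{g(B_{\phi\trestriction_k})},
\end{equation*}
but your proposed repair does not close it. Selecting $\psi_k\in B_{\phi\trestriction_k}$ with $g(\psi_k)$ ``approaching'' $x$ requires a countable neighborhood base at $x$ \emph{in $X$}, and metrizability of the graph $\Gamma=\{(\psi,g(\psi))\}$ does not supply one: the point you need to approximate is $(\phi,x)$, which is not known to lie in $\Gamma$ --- that is precisely what is to be proved --- so the metric on $\Gamma$ gives you no neighborhood base at it. The correct argument dispenses with sequences entirely. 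The sets $B_{\phi\trestriction_k}\times U$, with $U$ a neighborhood of $x$ in $X$, form a neighborhood base of $(\phi,x)$ in $\field{N}_0^{\field{N}_0}\times X$, because the $B_{\phi\trestriction_k}$ are a base at $\phi$; the hypothesis $x\in\overline{g(B_{\phi\trestriction_k})}$ says that each such basic neighborhood meets $\Gamma$; and $\Gamma$ is closed in $\field{N}_0^{\field{N}_0}\times X$ since $g$ is continuous and $X$ is Hausdorff. Hence $(\phi,x)\in\overline{\Gamma}=\Gamma$, so $x=g(\phi)\in A$. Note finally that the paper's own definition of Souslin-F already identifies these sets with projections of closed subsets of $\field{N}_0^{\field{N}_0}\times X$; granting that identification, the closedness of $\Gamma$ gives (i)$\Rightarrow$(ii) in one line, and the explicit scheme $\overline{g(B_\sigma)}$ is only needed if one insists on exhibiting the Souslin scheme directly.
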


\begin{definition}[finite structural dimension]
We say that $(Y,d)$ has \emph{finite structural dimension} if and
only if for all positive $\xi$, there exist $N\in \field{N}$
such that every subset of $Y$ of sufficiently small diameter
$\delta$ can be covered by $N$ sets of diameter not greater than
$\xi\delta$.
\end{definition}

\begin{remark} Recall that a space is \emph{doubling} if there is a number $N$ such that each ball can be covered by $N$ balls of half the radius of the original ball. Doubling spaces have finite structural dimension. On the other hand, if a space has finite structural dimension and is compact, then it is doubling.
\end{remark}

\begin{definition}[ultrametric]
The metric space $(Y,d)$ is said to be \emph{ultrametric} if and only if
\begin{equation*}
d(x,z)\leq \max\{d(x,y),d(y,z)\}
\end{equation*}
for all $x$, $y$, and $z$ in $Y$.
\end{definition}

Howroyd lists different kind of spaces for which his result holds. We collect these spaces under a common condition:
\begin{definition}
Assume that a space $A=(A,d,\mathcal{H}^h)$ is given.
\begin{itemize}
\item[(H)] We say that $A$ has property (H) if and only if $(A,d)$ is an analytic subspace of a complete, separable metric space $(Y,d)$, and $h$ is a continuous Hausdorff function with $h(0)=0$ such that at least one of the following properties holds:
    \begin{itemize}
    \item[(a)] $h$ is of finite order,
    \item[(b)] $Y$ has finite structural dimension,
    \item[(c)] $Y$ is ultrametric.
    \end{itemize}
\item[\Hinf] We say that $A$ has property \Hinf\ if and only if $A$ satisfies property (H) and is not $\sigma$\nobreakdash-finite with respect to $\mathcal{H}^h$.
\end{itemize}
Assume $X=(X,\mu)$ is a measure space and $\kappa$ a cardinal.
\begin{itemize}
\item[(A$\kappa$)] We say that $X$ satisfies property (A$\kappa$) if and only if it can be written as union of $\kappa$\nobreakdash-many measurable sets of finite measure and is further such that the union of $\kappa$\nobreakdash-many sets of measure zero has measure zero as well.
\end{itemize}
\end{definition}

\begin{remark}[Continuum Hypothesis (\CH)]
Sometimes we will assume the Continuum Hypothesis (\CH), i.e.\ that every infinite subset of $\field{R}$ has either the same cardinality as $\field{R}$ or $\field{Q}$.
\end{remark}

\section{An abstract result}\label{Abstract Example}
It is surprising how few properties of $[0,1]$, $[0,1]^2$ and
their standard metric and measure, we actually need for the
argument in \tref{StandasExample} to work. We start by reviewing the \nref{StandasExample} in a
very abstract fashion\promisetrue. More precisely, we look at $[0,1]$ and
$[0,1]^2$ as topological spaces. We further replace the Hausdorff
measures by more general measures, for example Borel measures. In \tref{StandasExample}, we have
that the target $Y$ is higher dimensional than the domain $X$. This is encoded by the fact that $[0,1]$ has finite $\mathcal{H}^1$\nobreakdash-measure and $[0,1]^2$ is not $\sigma$\nobreakdash-finite with respect to $\mathcal{H}^1$. In the new setting, we do not compare the \lq\lq dimensions\rq\rq\ of
the domain and the target. However, we express the fact that the
domain is \lq\lq smaller\rq\rq\ than the target by
requiring that the domain can be written as union of, say, $\kappa$\nobreakdash-many measurable sets of finite measure, whereas $Y$ cannot. This also permits to look at situations where the image is lower dimensional than the target. If we look for example at a continuous surjection from $[0,1]^3$ onto $[0,1]^2$, then there is a set of measure zero in $[0,1]^3$ that is mapped onto a set of dimension two.

Let us finally state a quite abstract result:

\begin{lemma}\tlabel{StandaAbstract}
Assume that $\kappa<\lambda$ are two cardinals and if $\eta$ is a cardinal with $\kappa\cdot \eta=\lambda$, then $\aleph_0<\eta$. Suppose that $(X,\mu)$ is a measure space satisfying (A$\kappa$).
Assume $(Y,\nu,\mathfrak{S})$ is a measure space equipped with topology $\mathfrak{S}$ that contains $\lambda$\nobreakdash-many pairwise disjoint Borel sets $Y_j$ that have positive measure.

Further, we stipulate the existence of a measurable surjection $f\colon X\surj Y$.

Then there is a set $N\subset X$ such that $\mu(N)=0$ and $\nu(f(N))>0$. Moreover, $N$ can be chosen to be the preimage of one of the sets $Y_j$.
\end{lemma}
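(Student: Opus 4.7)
My plan is to argue by contradiction, in the spirit of the proof of \tref{StandasExample}. Suppose that $\mu(f^{-1}(Y_j))>0$ for every $j<\lambda$. Once a contradiction is reached, the conclusion follows: any index $j_0$ with $\mu(f^{-1}(Y_{j_0}))=0$ yields $N:=f^{-1}(Y_{j_0})$, which is $\mu$\nobreakdash-null, while by surjectivity $f(N)=Y_{j_0}$ satisfies $\nu(f(N))=\nu(Y_{j_0})>0$. First I would observe that each $E_j:=f^{-1}(Y_j)$ is $\mu$\nobreakdash-measurable: the family of subsets of $Y$ whose preimage under the measurable map $f$ is measurable forms a $\sigma$\nobreakdash-algebra containing the opens and hence all Borel sets.

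Next, I would use (A$\kappa$) to write $X=\bigcup_{i<\kappa}X_i$ with $\mu(X_i)<\infty$. For each fixed $j$, if all the intersections $E_j\cap X_i$ were $\mu$\nobreakdash-null, then $E_j$ itself would be a $\kappa$\nobreakdash-indexed union of null sets and hence null by the second clause of (A$\kappa$), contradicting $\mu(E_j)>0$. So I can select $i(j)<\kappa$ with $\mu(E_j\cap X_{i(j)})>0$. The central counting step is then: for each fixed $i<\kappa$, the sets $\{E_j\cap X_i\}_{j<\lambda}$ are pairwise disjoint subsets of the finite\nobreakdash-measure set $X_i$, so stratifying according to $\mu(E_j\cap X_i)>1/n$ shows that each stratum is finite and therefore $\{j<\lambda:\mu(E_j\cap X_i)>0\}$ is countable. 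In particular, every fiber $\{j<\lambda:i(j)=i\}$ is countable.

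This partitions $\lambda$ into $\kappa$\nobreakdash-many countable pieces, giving $\lambda\leq \kappa\cdot \aleph_0$. The arithmetic hypothesis on $\kappa$ and $\lambda$ is tailored precisely to rule this out: if $\kappa$ is infinite then $\kappa\cdot \aleph_0=\kappa<\lambda$, and if $\kappa$ is finite then the hypothesis forbids the existence of a cardinal $\eta\leq\aleph_0$ with $\kappa\cdot \eta=\lambda$, which forces $\lambda$ itself to be uncountable and hence $\kappa\cdot \aleph_0=\aleph_0<\lambda$. Either way the inequality $\lambda\leq\kappa\cdot \aleph_0$ is contradicted, closing the argument. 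The main obstacle I anticipate is simply bookkeeping the cardinal hypothesis cleanly into the working inequality $\kappa\cdot \aleph_0<\lambda$; the measure\nobreakdash-theoretic core is nothing more than a pigeonhole on pairwise disjoint positive-measure sets inside a finite-measure container.
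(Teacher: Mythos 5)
Your proposal is correct and follows essentially the same route as the paper's proof: decompose $X$ via (A$\kappa$), pull back the disjoint Borel sets $Y_j$, locate for each $j$ a piece $X_{i(j)}$ meeting $f^{-1}(Y_j)$ in positive measure, and run a pigeonhole on the finite-measure pieces. The only (harmless) difference is organizational -- you show each $X_i$ can support only countably many positive intersections and derive the cardinality contradiction $\lambda\leq\kappa\cdot\aleph_0$, while the paper instead fixes one $X_n$ supporting uncountably many and sums measures to contradict $\mu(X_n)<\infty$; your write-up is if anything more explicit about how the cardinal hypothesis enters.
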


\begin{proof}
Without loss of generality, we may assume that $\mu(X)>0$ for otherwise we can choose $N=f^{-1}(Y_1)$.
We first write $X$ as union
\begin{equation*}
X=\cup_{i\in I} X_i,
\end{equation*}
where each $X_i$ is measurable, has finite measure, and  $\card{I}\leq \kappa$. We let $\{Y_j\}_j$ be a collection of $\lambda$\nobreakdash-many pairwise disjoint Borel sets $Y_j$ with $\nu(Y_j)>0$. We set $A_j:=f^{-1}(Y_j)$ and note that the sets $A_j$ are measurable. We further argue that the sets in $\{A_j\}$ are pairwise disjoint. If not, then there are distinct $j$ and $k$ and a point $x\in X$ contained in $A_j\cap A_k$. But then, as $f$ is surjective, $f(x)\in f(A_j)\cap f(A_k)=Y_j\cap Y_k$---a contradiction.

Assume by contradiction that $\mu(N)=0$ implies $\nu(f(N))=0$ for all sets $N\subset X$. Otherwise stated, if $\nu(f(E))>0$, then $\mu(E)>0$ for each measurable set $E\subset X$. This establishes that each set $A_j$ has positive measure.

For each set $A_j$, there is a set $X_i$ such that $\mu(X_i\cap A_j)>0$ for otherwise the measure of $A_j$ would be zero. Having the different cardinalities in mind, we find a set $X_n$ such that there are uncountably many $A_j$ such that $\mu(X_n\cap A_j)>0$. There exists $m\in \field{N}$ and a countably infinite set $L$ such that for the above chosen set $X_n$
\begin{equation*}
\mu(A_l\cap X_n)>\frac{1}{m}
\end{equation*}
for $l\in L$.

This implies that
\begin{equation*}
\infty>\mu(X_n)\geq\sum_{l\in L}\mu(X_n\cap A_l)\geq \sum_{l\in L}\frac{1}{m}=\infty.
\end{equation*}
This contradiction gives the proof.
\end{proof}

\begin{remark}
A.~J.~Ostaszewski shows in Theorem~2 of \cite{Ostaszewski} that Martin's axiom implies that the union of less than $2^{\aleph_0}$ sets of $\mu$\nobreakdash-measure zero is of $\mu$\nobreakdash-measure zero if $\mu$ is a measure of Hausdorff type. He attributes the proof for the case when $\mu$ is a Hausdorff measure $\mathcal{H}^h$ with Hausdorff function $h$ satisfying $h(0)=0$ to Martin and Solovay, \cite{MartinSolovay}. Further studies of the connection between Martin's axiom and Hausdorff measures can be found in \cite{Zindulka}.
\end{remark}

The conclusion of \tref{StandaAbstract} can hold in cases where the assumptions are not satisfied:
\begin{example}
Let $Y=\{y_0\}$ be a set with one element. Define the Borel measure $\nu$ on $Y$ by
$\nu(Y)=\infty$. Then $Y$ is not $\sigma$\nobreakdash-finite with respect to
$\nu$, but does not contain uncountably many pairwise disjoint sets. If $(X,\mu)$
is a measure space that contains a point $\{x_0\}$ that has measure zero
and $f\colon X\surj Y$ is a surjection, then there exists a set $N\subset X$
with $\mu(N)=0$ and $0<\nu(f(N))=\infty$. For example, we can take $N=\{x_0\}$.
\end{example}

However, we will see in \tref{trivialTopology} that we need some conditions in order that the conclusion of \tref{StandaAbstract} holds.\footnote{at least when we assume the Continuum Hypothesis}

\section{A first test for the existence of disjoint sets}\label{First criterion}
In \tref{StandaAbstract}, we require the existence of a certain amount of pairwise disjoint Borel sets $B_\alpha$ in $Y$
with $\nu(B_\alpha)>0$. Later, we will assume that $Y$ is a
metric space and $\nu$ a Hausdorff measure; but before, we provide
an abstract criterion for the existence of the desired sets. We will
employ it later in concrete situations.

The idea is the following: we assume that $\nu(Y)=\infty$ and
that $Y$ cannot be written as, let us say, a countable union $\cup_n Y_n$ of
Borel sets $Y_n$ with $\nu(Y_n)<\infty$. We find a Borel set $B_1\subset Y$ with $0<\nu(B_1)<\infty$. In the next
step, we look at $Y\setminus B_1$ and extract a Borel set
$B_2\subset Y\setminus B_1$ with $0<\nu(B_2)<\infty$.
Continuing this process, we end up with countably many
pairwise disjoint Borel sets $B_n$ with $0<\nu(B_n)$. To
prove the existence of a desired collection with uncountably many
elements, we resort to Zorn's Lemma. We prove that there has to be
a maximal collection of pairwise disjoint Borel sets,
and that this collection has the desired cardinality.

Let us recall Zorn's Lemma, which is equivalent to
the axiom of choice:
\begin{lemma}[Zorn's Lemma]\tlabel{Zorn} Let
$(P,\leq)$ be a nonempty partially ordered set, i.e.\ for all
$a,\, b,\, c\in P$
\begin{enumerate}
\item[(a)] $a\leq a$,
\item[(b)] $a\leq b$ and $b\leq a$ implies $a=b$,
\item[(c)] $a\leq b$ and $b\leq c$ implies $a\leq c$.
\end{enumerate}
A \emph{chain} $Q$ in $P$ is a subset of $P$ such that for all
$a,\, b\in Q$, $a\leq b$ or $b\leq a$. If each chain in $P$ has an
upper bound in $P$, then $P$ has a maximal element.
\end{lemma}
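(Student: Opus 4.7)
The statement of the lemma is equivalent to the Axiom of Choice, and I would give the textbook derivation from AC by transfinite recursion. The plan is to assume, for contradiction, that $P$ has no maximal element, and then to construct a strictly increasing transfinite sequence in $P$ that is eventually too long to fit inside the set $P$.

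The first step is to observe that, under the negation of the conclusion, every chain $C\subseteq P$ admits a \emph{strict} upper bound: by hypothesis $C$ has an upper bound $u\in P$, and since $u$ is not maximal there exists $v\in P$ with $u<v$, so $v$ strictly dominates every element of $C$. Applying the Axiom of Choice to the family of nonempty sets of strict upper bounds of chains, one obtains a function $\phi$ assigning to every chain $C$ in $P$ a strict upper bound $\phi(C)\in P$. Now define a sequence $(x_\alpha)$ indexed by ordinals by picking $x_0\in P$ arbitrarily and setting $x_\alpha=\phi(\{x_\beta:\beta<\alpha\})$ for $\alpha>0$. A routine induction on $\alpha$ shows that $\{x_\beta:\beta<\alpha\}$ is at every stage a chain and that $x_\alpha$ strictly exceeds all earlier terms, so the recursion is well defined and $\alpha\mapsto x_\alpha$ is strictly increasing, hence injective.

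To turn this into an honest contradiction inside ZFC rather than a proper-class argument, I would invoke Hartogs' theorem: there is a least ordinal $\aleph(P)$ that admits no injection into $P$. Restricting the construction above to ordinals below $\aleph(P)$ yields an injection $\aleph(P)\hookrightarrow P$, contradicting the definition of $\aleph(P)$. Hence the starting assumption fails and $P$ must possess a maximal element. The main obstacle I expect is the careful bookkeeping of the transfinite recursion---checking at every successor and limit stage that the chain condition is preserved so that $\phi$ can be applied at the next step---but once that is in hand the contradiction is immediate.
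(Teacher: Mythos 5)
Your proposal is the standard and correct derivation of Zorn's Lemma from the Axiom of Choice via transfinite recursion and Hartogs' theorem. There is nothing in the paper to compare it against: the paper does not prove this lemma at all, but merely recalls it as a well-known statement equivalent to the Axiom of Choice (citing no source and giving no argument), and then uses it as a tool in \tref{UncountablyManyBorelPlusSets} and \tref{notSeparable}. So any correct proof you supply is necessarily ``a different route'' from the paper's non-proof.

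On the merits of your argument: the structure is sound. Two small points deserve explicit care in a full write-up. First, the choice function $\phi$ is legitimately obtained from AC because the collection of chains in $P$ is a subset of $\mathcal{P}(P)$, hence a set, so you are choosing from a set-indexed family of nonempty sets of strict upper bounds; it is worth saying this, since a careless reading might suggest choosing over a proper class. Second, the empty set is a chain, and your recursion implicitly uses it at stage $0$ only if you define $x_0=\phi(\emptyset)$; you instead pick $x_0$ arbitrarily using the nonemptiness hypothesis, which is fine, but then the induction establishing that $\{x_\beta:\beta<\alpha\}$ is a chain should note that comparability of any two earlier terms follows from the strict increase already proved at smaller indices, including across limit stages. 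With those details pinned down, the Hartogs bound gives the contradiction exactly as you describe, and the proof is complete.
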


In the following \nref{UncountablyManyBorelPlusSets}, we can for example think of $\mathcal{A}$ as the analytic sets, $I$ as $\field{N}$ and $\mathcal{C}$ as the collection of the Borel or the closed sets.

\begin{lemma}\tlabel{UncountablyManyBorelPlusSets}
Let $(Y,\nu)$ be a measure space and $\mathcal{C}$ be a collection of subsets of $Y$.
Assume there exists
a collection $\mathcal{A}$ of subsets of $Y$ and a cardinal $\kappa\geq \aleph_0$ with the following
properties
\begingroup
\renewcommand{\theenumi}{\alph{enumi}}
\def\p@enumii{\theenumi}
\def\labelenumi{(\theenumi)}
\begin{enumerate}
\item $Y\in \mathcal{A}$,\label{YinA}
\item  $Y$ cannot be written as union of $\kappa$\nobreakdash-many sets in $\mathcal{A}$ with finite measure,\label{notSigmaFinite}
\item $\cup_{j\in J} C_j$ and $Y\setminus \cup_{j\in J} C_j$ lie in $\mathcal{A}$ provided $\card(J)\leq \kappa$ and $C_j\in \mathcal{C}$,\label{ComplementOK}
\item If $\nu(A)=\infty$ for some $A\in \mathcal{A}$,
then there exists a set $C\in \mathcal{C}$ with $C\subset A$ and
$0<\nu(C)<\infty$.\label{ExistenceSubsetInA}
\end{enumerate}
\endgroup
Under these conditions, there exist
exists a collection of
pairwise disjoint sets $C_\gamma\in \mathcal{C}$ with $0<\nu(C_\gamma)$ whose cardinality is strictly larger than $\kappa$.
\end{lemma}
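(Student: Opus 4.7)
The plan is to apply Zorn's Lemma (\tref{Zorn}) to carry out the greedy extraction outlined before the statement. Consider the partially ordered set $(P,\subseteq)$ where $P$ consists of all families $\mathcal{F}\subset\mathcal{C}$ that are pairwise disjoint and satisfy $0<\nu(C)<\infty$ for every $C\in\mathcal{F}$. The empty family lies in $P$, and the union of any chain in $P$ is again such a family (pairwise disjointness and the finite-positive-measure condition are preserved under arbitrary unions of chains), hence an upper bound. Zorn's Lemma produces a maximal family $\mathcal{M}\in P$, and the task reduces to showing $\card(\mathcal{M})>\kappa$.

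Suppose for contradiction that $\card(\mathcal{M})\leq\kappa$, and set $U:=\bigcup_{C\in\mathcal{M}}C$. By hypothesis (c), $Y\setminus U$ lies in $\mathcal{A}$. If $\nu(Y\setminus U)=\infty$, then (d) supplies some $C\in\mathcal{C}$ with $C\subset Y\setminus U$ and $0<\nu(C)<\infty$; but $\mathcal{M}\cup\{C\}$ is a strictly larger element of $P$, contradicting maximality. If instead $\nu(Y\setminus U)<\infty$, then applying (c) to each singleton $\{C\}$ with $C\in\mathcal{M}$ places every such $C$ in $\mathcal{A}$, and combining this with $Y\setminus U\in\mathcal{A}$ exhibits
\[
Y=(Y\setminus U)\cup\bigcup_{C\in\mathcal{M}} C
\]
as a union of at most $\card(\mathcal{M})+1\leq\kappa$ (using $\kappa\geq\aleph_0$) sets from $\mathcal{A}$, each of finite measure, which directly contradicts (b).

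The main point to be careful about is the choice of the partial order $P$: insisting that members have \emph{finite}, rather than merely \emph{positive}, measure is precisely what allows the second case above to collide with (b). Had one admitted sets of infinite measure into the Zorn family, a single infinite-measure $C\in\mathcal{M}$ could persist in the decomposition of $Y$ and obstruct the final appeal to (b). Apart from this observation, the argument is routine bookkeeping, with $\kappa\geq\aleph_0$ absorbing the extra summand $Y\setminus U$ in the cardinality count at the end.
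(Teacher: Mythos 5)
Your proof is correct and follows essentially the same route as the paper: Zorn's Lemma applied to families of pairwise disjoint members of $\mathcal{C}$ of finite positive measure, followed by a contradiction with hypothesis (b) when the maximal family has cardinality at most $\kappa$. The only differences are cosmetic — you start Zorn from the empty family where the paper first exhibits a nonempty one via (a), (b), (d), and you spell out the case analysis on $\nu(Y\setminus U)$ that the paper compresses into a one-line appeal to (b).
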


\begin{proof}
We let
\begin{equation*}
\mathcal{D}:=\{\{D_\delta\}_{\delta\in J}\}
\end{equation*}
be the set of all possible collections $\{D_\delta\}_{\delta\in J}$
of pairwise disjoint sets in the collection $\mathcal{C}$ with
$0<\nu(D_\delta)<\infty$. We say that $\{D_\delta\}_{\delta\in
J}\leq \{\widetilde{D}_{\tilde{\delta}}\}_{\tilde{\delta}\in \tilde{J}}$ if
$\{D_\delta\}_{\delta\in J}\subset
\{\widetilde{D}_{\tilde{\delta}}\}_{\tilde{\delta}\in \tilde{J}}$. Since
$\subset$ is an ordering relation, $(\mathcal{D},\leq)$ is a partially ordered set.

Let us verify the assumptions of Zorn's Lemma and thus the
existence of a maximal collection in $\mathcal{D}$.

By \eqref{YinA}, the set $Y$ is in $\mathcal{A}$ and by \eqref{notSigmaFinite} we obtain
$\nu(Y)=\infty$. Assumption~\eqref{ExistenceSubsetInA} gives a set $D\in \mathcal{C}$ with $0<\nu(D)<\infty$ implying $\{D\}\in \mathcal{D}\not=\emptyset$.

Let $\{\{D^j_\delta\}_{\delta\in J_j}\}_{j\in J}$ be a chain in
$\mathcal{D}$. We set
\begin{equation*}
\mathcal{U}:=\{D^j_\delta:\, \delta\in J_j,\, j\in J\}
\end{equation*}
and claim that $\mathcal{U}$ is an upper bound of the chain in
$\mathcal{D}$. From the definition of $\mathcal{U}$, it follows at
once that its elements are sets $D^j_\delta$ in $\mathcal{C}$  with
$0<\nu(D^j_\delta)<\infty$. Now let us show that the sets in
$\mathcal{U}$ are pairwise disjoint. Let us select two
distinct sets in $\mathcal{U}$. The chain property allows us to assume that both sets are in the same element of the chain forcing
their disjointness. We conclude that $\mathcal{U}$ is indeed in
$\mathcal{D}$, and it is easy to see that $\mathcal{U}$ is an upper
bound for the chain.

Zorn's Lemma~\pref{Zorn} guarantees the existence of a maximal collection
$\{M_\gamma\}_{\gamma\in C}$ of pairwise disjoint sets in $\mathcal{C}$ with
$0<\nu(M_\gamma)<\infty$.

Assume by contradiction that above maximal collection has cardinality at most $\kappa$. We set
\begin{equation*}
M:=\cup_{\gamma\in C}M_\gamma.
\end{equation*}
By \eqref{ComplementOK}, $M$ and $Y\setminus M$ lie in $\mathcal{A}$. According to \eqref{notSigmaFinite}, the measure of $Y\setminus M$ is infinite.
Appealing to \eqref{ExistenceSubsetInA}, there exists a set
$\widetilde{M}\subset Y\setminus M$ in $\mathcal{C}$ with
$0<\nu(\widetilde{M})<\infty$, and it is clearly disjoint to every
element in $\{M_\gamma\}_{\gamma\in C}$. But then the collection
$\{M_\gamma\}_{\gamma\in C}\cup \{\widetilde{M}\}\in \mathcal{D}$
contradicts the maximality of $\{M_\gamma\}_{\gamma\in C}$, and we
are done.
\end{proof}

\begin{remark}
For related theorems, see \cite{DaviesDisjoint} (or the comment before \tref{perfect sets in the compact case}) and Theorem~2 in \cite{LarmanInfinite}.
\end{remark}

\section{Existence of disjoint sets in the metric setting and first results}
We give now a version of Corollary~7 in \cite{Howroyd} that
we will use to verify \eqref{ExistenceSubsetInA} in \tref{UncountablyManyBorelPlusSets}.
\begin{theorem}[Howroyd]\tlabel{ExistenceOfSubsets}
Suppose $(A,d,\mathcal{H}^h)$ satisfies property (H).
Then for all real $l$ with\footnote{See \eqref{definitionOfXi} for the relation between $\xi$ and $h$.} $l<\Lambda^\xi(A)$, there exists a
(compact) subset $K$ of $A$ such that
\begin{equation*}
l<\mathcal{H}^h(K)<\infty.
\end{equation*}
\end{theorem}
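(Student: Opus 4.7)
The plan is to split the argument into two pieces: first produce a compact subset $K \subset A$ with $\mathcal{H}^h(K) > l$, and then arrange simultaneously that $\mathcal{H}^h(K) < \infty$. Since $A$ is an analytic subspace of a complete separable metric space $Y$, \tref{Souslin} lets us represent $A$ via Souslin's operation applied to closed subsets of $Y$: there is a scheme $(F_\sigma)_{\sigma \in S}$ of closed sets with $A = \bigcup_{\phi \in \field{N}_0^{\field{N}_0}} \bigcap_{k \ge 1} F_{\phi\trestriction k}$. Completeness of $Y$ will convert total boundedness into compactness at the end.

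The central step is a Davies-type inductive construction driven by an ``increasing sets lemma'' of the form
\begin{equation*}
E_1 \subset E_2 \subset \cdots \quad \Longrightarrow \quad \Lambda^\xi\bigl(\bigcup_n E_n\bigr) = \sup_n \Lambda^\xi(E_n).
\end{equation*}
Fixing $\epsilon > 0$ with $l + \epsilon < \Lambda^\xi(A)$ and a sequence $\delta_k \to 0$, one inductively selects finite sets $S_k \subset \field{N}_0^k$ extending $S_{k-1}$, with each $F_\sigma$ for $\sigma \in S_k$ of diameter at most $\delta_k$, so that the truncated sets
\begin{equation*}
A_k := A \cap \bigcup_{\sigma \in S_k} F_\sigma
\end{equation*}
still satisfy $\Lambda^\xi(A_k) > l + \epsilon$. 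The increasing sets lemma is what permits the discarding of all but finitely many branches at each stage while losing only a small fraction of the measure. The set $K := \bigcap_k \bigcup_{\sigma \in S_k} F_\sigma \subset A$ is closed in $Y$ and, by the diameter control at level $k$, totally bounded; completeness of $Y$ then yields compactness. Passing to the limit in the bounds on $A_k$ gives $\mathcal{H}^h(K) \ge l + \epsilon > l$; conversely, the cover of $K$ by the finitely many $F_\sigma$ at level $k$ produces an upper bound of the form $|S_k|\, h(\delta_k)$, and choosing $\delta_k$ to decrease sufficiently fast during the construction keeps this quantity finite and so forces $\mathcal{H}^h(K) < \infty$.

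The main obstacle is the increasing sets lemma itself: it is known to fail for general metric spaces and general Hausdorff functions, a point emphasised by Davies. The three alternatives bundled into property~(H) are exactly what resurrects it. In case~(a), finite order of $h$ supplies a constant $\eta$ with $\xi(\widehat U) \le \eta\, \xi(U)$, so an efficient $\delta$-cover of some $E_n$ can be converted into a comparable-weight $\delta$-cover of $\bigcup_n E_n$ by absorbing the boundary inflation into $\widehat U$. In case~(b), finite structural dimension supplies a covering-by-small-pieces bound that plays the analogous role. In case~(c), the ultrametric axiom forces any two balls to be nested or disjoint, which trivialises the packing-versus-covering comparison. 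In each case the increasing sets lemma is proved separately and then fed into the uniform construction above.
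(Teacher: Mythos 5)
First, a point of reference: the paper offers no proof of this statement at all --- it is imported verbatim as ``a version of Corollary~7 in \cite{Howroyd}'' and used as a black box. So there is no internal argument to compare yours against; the comparison has to be with Howroyd's own proof, and your sketch does correctly reproduce its architecture: a Davies-style inductive truncation of a regular Souslin scheme of closed sets, powered by an increasing sets lemma, with completeness converting total boundedness of the limit set into compactness, and with the three alternatives in property~(H) being exactly the hypotheses under which Howroyd establishes that lemma. That diagnosis of \emph{why} (H) is there is the most valuable part of your write-up and is accurate.

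There are, however, two genuine gaps at the endpoints of the construction. (i) The lower bound does not follow by ``passing to the limit in the bounds on $A_k$'': you have $K \subset A_k$, and a lower bound on $\Lambda^\xi(A_k)$ says nothing about the subset $K$ by monotonicity. The actual step uses the compactness of $K$: given an efficient \emph{open} cover of $K$, finitely many members cover a $\rho$-neighbourhood of $K$ (Lebesgue number), and since every surviving $F_\sigma$ at level $k$ meets $K$ and has diameter at most $\delta_k$, the whole of $A_k$ is eventually inside that finite subcover; only then does $\Lambda^\xi(A_k) > l+\epsilon$ transfer to $K$ (and one needs right-continuity of $h$ to pass between open and arbitrary covers). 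This also silently uses pruning of the tree (so that every node lies on an infinite branch) and K\"onig's lemma to see $K \subset A$ at all. (ii) The finiteness $\mathcal{H}^h(K)<\infty$ cannot be arranged by ``choosing $\delta_k$ to decrease sufficiently fast'': the diameter bound $\delta_k$ must be fixed \emph{before} the increasing sets lemma tells you how many branches $\lvert S_k\rvert$ you must retain to keep the measure above $l+\epsilon$, and shrinking $\delta_k$ only inflates $\lvert S_k\rvert$, so the product $\lvert S_k\rvert\, h(\delta_k)$ is not under your control; the circularity is real. In Besicovitch--Davies--Howroyd the finiteness comes from a separate selection/stopping argument that bounds $\sum_{\sigma\in S_k}\xi(F_\sigma)$ directly (discarding branches of negligible weight and subdividing heavy ones, essentially an intermediate-value argument on the truncated measure), not from a diameter choice. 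Both gaps are fillable from the literature, but as written neither step is a proof.
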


\begin{corollary}\tlabel{HowroydCor}
Suppose $(Y,d,\mathcal{H}^h)$ satisfies \Hinf.
Then there are uncountably many pairwise disjoint compact subsets of $Y$ with positive measure.
\end{corollary}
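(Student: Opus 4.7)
My plan is to apply \tref{UncountablyManyBorelPlusSets} with $\kappa=\aleph_0$, taking $\mathcal{C}$ to be the family of compact subsets of $Y$ and $\mathcal{A}$ to be the family of analytic (equivalently Souslin-F) subsets of $Y$. Once the four hypotheses are verified, the lemma produces a pairwise disjoint family of compact sets of positive, finite $\mathcal{H}^h$-measure whose cardinality strictly exceeds $\aleph_0$, which is exactly the claimed uncountable family.

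The easy verifications come first. For \eqref{YinA}, $Y$ itself is analytic by property~(H). For \eqref{ComplementOK}, every compact subset of the Hausdorff space $Y$ is closed, hence Borel, and by \tref{Souslin} every Borel subset of the analytic space $Y$ is analytic; a countable union of Borel sets is Borel, as is its complement in $Y$, so both remain in $\mathcal{A}$. For \eqref{notSigmaFinite}, I would invoke the fact that the $\mathcal{H}^h$-measurable sets form a $\sigma$-algebra that is closed under Souslin's operation (true for any metric measure, recalling that \tref{RogersRegular} guarantees all Borel sets are $\mathcal{H}^h$-measurable) and therefore contains every analytic set; hence if $Y$ were a countable union of analytic sets of finite $\mathcal{H}^h$-measure, it would be $\sigma$-finite, contradicting \Hinf.

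The only step with real content is \eqref{ExistenceSubsetInA}, and this is where I expect the main (mild) obstacle: given an analytic $A\subset Y$ with $\mathcal{H}^h(A)=\infty$, I need a compact $K\subset A$ with $0<\mathcal{H}^h(K)<\infty$. This is \tref{ExistenceOfSubsets} applied to $A$ with, say, $l=1$, but only after checking that $A$ itself inherits property~(H) from $Y$. That check is routine: $Y$ is by assumption an analytic subspace of some ambient complete separable metric space $Z$, and an analytic subset of an analytic subspace of $Z$ is again analytic in $Z$; the three alternative hypotheses in (H)---finite order of $h$, finite structural dimension of $Z$, ultrametricity of $Z$---refer only to the ambient space $Z$ and the function $h$, so they transfer automatically from $Y$ to $A$. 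With \eqref{ExistenceSubsetInA} in place, \tref{UncountablyManyBorelPlusSets} yields the conclusion.
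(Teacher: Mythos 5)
Your proposal is correct and follows exactly the paper's own route: apply \tref{UncountablyManyBorelPlusSets} with $\kappa=\aleph_0$, $\mathcal{C}$ the compact and $\mathcal{A}$ the analytic subsets of $Y$, using \tref{Souslin} for the closure properties and Howroyd's \tref{ExistenceOfSubsets} for hypothesis~(d). The paper dismisses the remaining hypotheses as ``easily verified,'' so your explicit checks (measurability of analytic sets for~(b), and that an analytic subset of $Y$ inherits property~(H) so Howroyd's theorem applies) only fill in details the paper leaves implicit.
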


\begin{proof}
In \tref{UncountablyManyBorelPlusSets}, we let $\mathcal{C}$ be the compact and $\mathcal{A}$ the analytic subsets of $Y$. Further, we let $\kappa=\aleph_0$. By \tref{Souslin}, Borel sets are analytic. Requirement~(d) in \tref{UncountablyManyBorelPlusSets} follows from
\tref{ExistenceOfSubsets}, and the other assertions in the \nref{UncountablyManyBorelPlusSets} are easily verified.
\end{proof}

If we assume that the space $Y$ is compact, then we can be more precise about the number of pairwise disjoint sets in \tref{UncountablyManyBorelPlusSets}. The Theorem in \cite{DaviesDisjoint} roughly states that if $(Y,d)$ is a compact space of non-$\sigma$\nobreakdash-finite $\mathcal{H}^h$\nobreakdash-measure, and every closed subset of $Y$ has subsets of finite measure, then $Y$ contains a system of $2^{\aleph_0}$ disjoint closed subsets each of non-$\sigma$\nobreakdash-finite measure.

\begin{corollary}\tlabel{perfect sets in the compact case}
If $(Y,d,\mathcal{H}^h)$ is a compact space that satisfies \Hinf, then there are $2^{\aleph_0}$ pairwise disjoint compact subsets each of non-$\sigma$\nobreakdash-finite measure.
\end{corollary}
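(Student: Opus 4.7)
The plan is to reduce the claim to the theorem of Davies from \cite{DaviesDisjoint}, recalled in the paragraph preceding the corollary: a compact metric space of non\nobreakdash-$\sigma$\nobreakdash-finite $\mathcal{H}^h$\nobreakdash-measure in which every closed subset admits a subset of finite $\mathcal{H}^h$\nobreakdash-measure contains $2^{\aleph_0}$ pairwise disjoint closed subsets, each of non\nobreakdash-$\sigma$\nobreakdash-finite $\mathcal{H}^h$\nobreakdash-measure. Since $Y$ is compact, its closed subsets are automatically compact, so producing the asserted family reduces to verifying the two hypotheses of Davies' theorem under \Hinf.

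The first hypothesis is built into \Hinf: by definition $Y$ is not $\sigma$\nobreakdash-finite with respect to $\mathcal{H}^h$. For the second hypothesis, let $F\subset Y$ be an arbitrary closed set. If $\mathcal{H}^h(F)<\infty$, then $F$ itself serves as a subset of finite measure. Otherwise $\mathcal{H}^h(F)=\infty$, and I would apply Howroyd's theorem~\tref{ExistenceOfSubsets} (say with $l=1$) to extract a compact $K\subset F$ with $1<\mathcal{H}^h(K)<\infty$. To do this I must check that $(F,d,\mathcal{H}^h)$ inherits property (H) from $Y$. Since $Y$ is compact, it is itself a complete separable metric space, and $F$ is closed in the analytic space $Y$, hence analytic. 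The Hausdorff function $h$ is unchanged, and whichever of (a), (b), (c) is witnessed for $Y$ persists for $F$: condition~(a) depends only on $h$; condition~(b) is inherited because any covering of a subset of $F$ of small diameter is in particular a covering of a subset of $Y$, so the structural constant carrying over verbatim; and condition~(c) is clearly inherited by any subspace of an ultrametric space. Hence $F$ satisfies~(H), and \tref{ExistenceOfSubsets} applies.

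Combining these observations, both hypotheses of Davies' theorem hold for $Y$, and it yields $2^{\aleph_0}$ pairwise disjoint closed subsets of $Y$ of non\nobreakdash-$\sigma$\nobreakdash-finite $\mathcal{H}^h$\nobreakdash-measure; compactness of $Y$ upgrades each of them to a compact set, which is the assertion. The only substantive step of my own is the verification that (H) passes to closed subsets, but this is routine bookkeeping; the genuine difficulty sits in \cite{DaviesDisjoint} and in \tref{ExistenceOfSubsets}, both of which are imported as black boxes.
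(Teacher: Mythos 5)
Your proposal is correct and follows exactly the route the paper intends: the paper states this corollary without proof immediately after quoting Davies' theorem, and the intended argument is precisely your reduction---verify the non-$\sigma$-finiteness hypothesis from the definition of (H$\infty$) and the ``closed subsets admit subsets of finite measure'' hypothesis via Howroyd's theorem applied to closed (hence compact, hence analytic) subsets of $Y$, then note that closed subsets of the compact space $Y$ are compact. Your verification that property (H) passes to closed subsets is the right bookkeeping (and is even slightly easier than you make it, since one may keep $Y$ itself as the ambient complete separable space in the definition of (H)).
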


We recall the definition of a perfect set.
\begin{definition}[perfect]
A set is \emph{perfect} if it is closed, and each open set that meets it at all, meets it in an infinite set.
\end{definition}

\begin{theorem}\tlabel{GeneralResult} Suppose $X=(X,\mu)$ is $\sigma$\nobreakdash-finite and $\mu$ is Borel. Suppose further that $(Y,d,\mathcal{H}^h)$ satisfies \Hinf.

If $f\colon X\surj Y$ is a measurable surjection, then there is a set $N\subset X$ such that $\mu(N)=0$ and $\mathcal{H}^h(f(N))>0$.

Moreover, if $X$ is additionally equipped with a topology with a countable basis and such that points are closed, and we additionally stipulate that $f$ is such that for every closed set $F$, the preimage $f^{-1}(F)$ can be written as countable union of closed sets, then $N$ can be chosen to be perfect.
\end{theorem}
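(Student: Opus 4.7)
The plan is to combine \tref{StandaAbstract} with Howroyd's existence theorem \tref{HowroydCor} for the first assertion, and then to upgrade the resulting witness to a perfect set using Cantor--Bendixson.

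For the first claim I would apply \tref{StandaAbstract} with $\kappa=\aleph_0$ and $\lambda=\aleph_1$. Since $X$ is $\sigma$\nobreakdash-finite with $\mu$ Borel, condition (A$\aleph_0$) holds automatically: the first clause is the definition of $\sigma$\nobreakdash-finiteness, the second is countable subadditivity of $\mu$. Because $(Y,d,\mathcal{H}^h)$ satisfies \Hinf, \tref{HowroydCor} delivers uncountably many pairwise disjoint compact (hence Borel) sets $K_\alpha\subset Y$ with $\mathcal{H}^h(K_\alpha)>0$, and I select $\aleph_1$ of them. The arithmetic hypothesis of \tref{StandaAbstract} is immediate, for if $\aleph_0\cdot \eta=\aleph_1$ then $\eta>\aleph_0$ (else the product stays $\aleph_0$). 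The lemma then produces some index $j$ for which $N:=f^{-1}(K_j)$ has $\mu(N)=0$; surjectivity of $f$ gives $f(N)=K_j$, so $\mathcal{H}^h(f(N))>0$.

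For the \emph{moreover} part, the hypothesis on $f$ makes $N=f^{-1}(K_j)$ an $F_\sigma$\nobreakdash-set, say $N=\bigcup_n F_n$ with each $F_n$ closed in $X$. By countable subadditivity of $\mathcal{H}^h$ some $F_{n_0}$ satisfies $\mathcal{H}^h(f(F_{n_0}))>0$, and $\mu(F_{n_0})=0$ by monotonicity; after relabelling I may assume $N$ itself is closed in $X$. The subspace $N$ inherits a countable base and the $T_1$ property, so \tref{CantorBendixson} applied inside $N$ yields a decomposition $N=P\cup C$ with $P$ perfect in $N$ and $C$ countable. Since $N$ is closed in $X$, the set $P$ is closed in $X$; and since every $X$\nobreakdash-open neighbourhood of a point of $P\subset N$ restricts to an $N$\nobreakdash-open neighbourhood, the ``dense in itself'' property transfers from $N$ to $X$, so $P$ is perfect in $X$. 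Clearly $\mu(P)\le\mu(N)=0$.

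It remains to check $\mathcal{H}^h(f(P))>0$. The hypothesis $h(0)=0$ built into \Hinf\ forces $\xi(\{y\})=0$, so every singleton in $Y$ has $\mathcal{H}^h$\nobreakdash-measure zero; thus $\mathcal{H}^h(f(C))=0$ by countable subadditivity, and
\[
\mathcal{H}^h(f(P))\ge \mathcal{H}^h(f(N))-\mathcal{H}^h(f(C))=\mathcal{H}^h(f(N))>0,
\]
which in particular forces $P\neq\emptyset$. The delicate step I anticipate is precisely this last one: the Cantor--Bendixson splitting lives in the domain, and one must verify that the countable remainder $C$ does not carry any $\mathcal{H}^h$\nobreakdash-mass on the target side, which is exactly where the $h(0)=0$ clause of \Hinf\ is indispensable.
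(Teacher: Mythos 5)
Your proposal is correct and follows essentially the same route as the paper: apply \tref{StandaAbstract} with $\kappa=\aleph_0$ using the disjoint compact sets supplied by \tref{HowroydCor}, take $N$ to be the preimage of one of them, and for the perfect refinement pass to a closed piece of the $F_\sigma$ decomposition and invoke Cantor--Bendixson. Your write-up is in fact slightly more careful than the paper's, which leaves implicit both the transfer of perfectness from the subspace $N$ to $X$ and the observation that the countable Cantor--Bendixson remainder is $\mathcal{H}^h$\nobreakdash-null because $h(0)=0$.
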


\begin{proof}
We want to apply \tref{StandaAbstract}. The assumptions on $(X,\mu)$ are satisfied with $\kappa=\aleph_0$. Using \tref{HowroydCor}, we obtain the existence of some cardinal $\lambda>\kappa$ along with $\lambda$\nobreakdash-many pairwise disjoint compact sets $Y_j$ with positive measure.
Applying \tref{StandaAbstract} gives the existence of $N$, and it can be chosen to be the preimage of a closed set.

For the moreover part, note that $N$ can be written as union of countably many closed sets $F_n$. If $\mathcal{H}^h(f(F_n))$ would be zero for all $n\in \field{N}$, then $\mathcal{H}^h(f(N))$ as well. Hence one of the sets $F_n$ is mapped to a set of positive measure. We extract a suitable perfect set by Cantor-Bendixson's \tref{CantorBendixson}.
\end{proof}

If we know that the target is compact, we can relax the assumption that $X$ is $\sigma$\nobreakdash-finite a little bit due to \tref{perfect sets in the compact case}. We skip the proof of the following result.
\begin{theorem}\tlabel{Compact target}
Let us assume that $\kappa<2^{\aleph_0}$ is a cardinal. Suppose that $(X,\mu)$ satisfies (A$\kappa$), and that $(Y,d,\mathcal{H}^h)$ satisfies \Hinf\ and is compact.

If $f\colon X\to Y$ is a measurable surjection, then there is a set $N\subset X$ such that $\mu(N)=0$ and $\mathcal{H}^h(f(N))>0$.

Moreover if $(X,\mu,\mathfrak{T})$ is a topological space where points are closed and with a countable basis, and $f$ is such that for every closed set $F$, the preimage $f^{-1}(F)$ can be written as countable union of closed sets, then $N$ can be chosen to be perfect.
\end{theorem}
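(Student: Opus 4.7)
The plan is to follow the structure of the proof of \tref{GeneralResult} verbatim, but upgrade the cardinality of the disjoint family of subsets of $Y$ from $\aleph_0$ to $2^{\aleph_0}$ by invoking \tref{perfect sets in the compact case} in place of \tref{HowroydCor}. This stronger supply of disjoint subsets is exactly what lets one trade $\sigma$\nobreakdash-finiteness of $X$ for the weaker (A$\kappa$) hypothesis.

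The first step is to verify the cardinal arithmetic hypothesis of \tref{StandaAbstract} with $\lambda:=2^{\aleph_0}$. If $\eta$ is a cardinal with $\kappa\cdot\eta=2^{\aleph_0}$ and $\eta\leq\aleph_0$, then $\kappa\cdot\eta\leq\max\{\kappa,\aleph_0\}<2^{\aleph_0}$ (using $\kappa<2^{\aleph_0}$), a contradiction; hence $\eta>\aleph_0$, as required. Next, \tref{perfect sets in the compact case} produces $2^{\aleph_0}$\nobreakdash-many pairwise disjoint compact subsets $\{Y_j\}$ of $Y$ of non\nobreakdash-$\sigma$\nobreakdash-finite (and in particular positive) $\mathcal{H}^h$\nobreakdash-measure; since $Y$ is metric, each $Y_j$ is Borel. \tref{StandaAbstract} then yields a set $N=f^{-1}(Y_j)$ with $\mu(N)=0$ and $\mathcal{H}^h(f(N))>0$.

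For the moreover part, each $Y_j$ is closed (being compact in a metric space), so by the topological hypothesis on $f$ we may write $N=\bigcup_{n\in\mathbb{N}}F_n$ with each $F_n\subset X$ closed. By countable subadditivity of $\mathcal{H}^h$, some $F_n$ satisfies $\mathcal{H}^h(f(F_n))>0$. Apply \tref{CantorBendixson} to that $F_n$ (its ambient space $X$ is $T_1$ with a countable base, which is exactly what the theorem requires) to obtain a decomposition $F_n=P\cup C$ with $P$ perfect and $C$ countable. Because $h(0)=0$ is part of (H) and hence of \Hinf, singletons have $\mathcal{H}^h$\nobreakdash-measure zero, so $\mathcal{H}^h(f(C))=0$; consequently $\mathcal{H}^h(f(P))=\mathcal{H}^h(f(F_n))>0$, while $\mu(P)\leq\mu(N)=0$, and $P$ is the required perfect set. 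The only mildly delicate point in the whole argument is the cardinal arithmetic bookkeeping; the rest is a routine transcription of the proof of \tref{GeneralResult} with the compact\nobreakdash-target variant of Howroyd's theorem swapped in.
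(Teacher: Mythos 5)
Your proof is correct and follows exactly the route the paper intends for this theorem (whose proof the paper explicitly skips, pointing to \tref{perfect sets in the compact case}): rerun the argument of \tref{GeneralResult}, replacing \tref{HowroydCor} by \tref{perfect sets in the compact case} to obtain $2^{\aleph_0}$ pairwise disjoint compact (hence Borel) sets of positive measure, and verify the cardinal hypothesis of \tref{StandaAbstract} with $\lambda=2^{\aleph_0}$. Both the cardinal arithmetic and the Cantor--Bendixson extraction of the perfect set are handled correctly.
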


\begin{question}
Does \tref{Compact target} also hold if we drop the assumption that $Y$ is compact?
\end{question}

We have encountered \iftoggle{Steve}{}{\todonote{We are referring to something nonexistent}} in \tref{Buckley} a (continuous) mapping $f\colon [0,1]\surj [0,1]^2$ that maps every set $N$ of $\mathcal{H}^1$\nobreakdash-measure zero to a set of $\mathcal{H}^2$\nobreakdash-measure zero. In contrast to \tref{GeneralResult}, there is no (perfect) set $P$ of $\mathcal{H}^1$\nobreakdash-measure zero with $\mathcal{H}^2(f(P))>0$. However, we may try to replace $\mathcal{H}^2(f(P))>0$ by the weaker condition $\dim_H f(P)=2$.

Actually, the next result will be an important ingredient in the proof; however it is stated in greater generality than necessary for the existence of above set. We skip the proof.
\begin{lemma}\tlabel{DimensionFunction}
Let $t>0$ and define
\begin{equation*}
h(x)=
\begin{cases}
0 & x=0,\\
x^t\log(\frac{1}{x^{t/2}}) & 0<x\leq e^{-2/t},\\
e^{-2} & x>e^{-2/t}.
\end{cases}
\end{equation*}
Then $h$ is a continuous Hausdorff function of finite order and $x^{t'}\prec h(x)\prec x^t$ for all $0<t'<t$.
\end{lemma}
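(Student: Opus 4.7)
The plan is to verify in turn: (i) continuity of $h$, including at $x=0$ and at the breakpoint $x=e^{-2/t}$; (ii) positivity and monotonicity, so that $h$ qualifies as a Hausdorff function in the sense of \tref{Def Hausdorff}; (iii) the finite-order condition $\limsup_{x\to 0^+}h(3x)/h(x)<\infty$; and (iv) the two comparabilities $x^{t'}\prec h\prec x^t$. Throughout, I would rewrite the middle piece as $h(x)=(t/2)\,x^t\log(1/x)$, using $\log(1/x^{t/2})=(t/2)\log(1/x)$.

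For (i), a direct substitution at the junction gives $h(e^{-2/t})=(t/2)\,e^{-2}\log(e^{2/t})=e^{-2}$, matching the plateau; and the classical fact that $x^a|\log x|\to 0$ as $x\to 0^+$ for any $a>0$ shows $\lim_{x\to 0^+}h(x)=0=h(0)$. For (ii), positivity on $(0,e^{-2/t}]$ is immediate since both $x^t$ and $\log(1/x)$ are positive there; differentiating the middle piece yields
\begin{equation*}
h'(x)=(t/2)\,x^{t-1}\bigl(t\log(1/x)-1\bigr),
\end{equation*}
which is positive precisely when $x<e^{-1/t}$. Since $e^{-2/t}<e^{-1/t}$, $h$ is strictly increasing on $(0,e^{-2/t}]$ and constant thereafter, hence non-decreasing.

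For (iii), whenever $3x\leq e^{-2/t}$ we compute
\begin{equation*}
\frac{h(3x)}{h(x)}=3^t\cdot\frac{\log(1/x)-\log 3}{\log(1/x)}\longrightarrow 3^t\quad\text{as }x\to 0^+,
\end{equation*}
which is finite. For (iv), the ordering $x^{t'}\prec h$ holds because
\begin{equation*}
\frac{h(x)}{x^{t'}}=(t/2)\,x^{t-t'}\log(1/x)\longrightarrow 0,
\end{equation*}
using $t-t'>0$; and $h\prec x^t$ holds because
\begin{equation*}
\frac{x^t}{h(x)}=\frac{2}{t\log(1/x)}\longrightarrow 0.
\end{equation*}

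No step presents a genuine obstacle; the argument is entirely routine real-variable calculus. The only design point worth noting is the exponent $t/2$ inside the logarithm: it is chosen so that the middle piece is still on its strictly increasing branch at the junction $x=e^{-2/t}$ (since $e^{-2/t}<e^{-1/t}$) and that its value there equals exactly $e^{-2}$, matching the constant plateau. This single book-keeping choice is what makes continuity and monotonicity work simultaneously.
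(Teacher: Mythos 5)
Your proof is correct and complete; the paper itself explicitly skips the proof of this lemma, so there is no argument to compare against, but your verification (continuity at $0$ and at the junction $x=e^{-2/t}$, monotonicity via the sign of $h'$ on $(0,e^{-1/t})$, the limit $h(3x)/h(x)\to 3^t$ for finite order, and the two limits defining $\prec$) is exactly the routine computation the author omits. The observation about why the exponent $t/2$ makes the junction value equal $e^{-2}$ and keeps the middle piece on its increasing branch is a nice touch that explains the design of $h$.
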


\begin{corollary}[Corollary of \tref{GeneralResult}]\tlabel{DimensionBusiness}
Suppose $(X,d)$ is $\sigma$\nobreakdash-finite with respect to $\mathcal{H}^s$. Assume $(Y,d)$ is $t$\nobreakdash-dimensional and an analytic subspace of a complete, separable space. If $f\colon X\surj Y$ is a measurable surjection, then there exists a set $N\subset X$ such that $\mathcal{H}^s(N)=0$ and $\dim_H f(N)=t$.

If additionally $t$ and $\mathcal{H}^t(Y)$ are positive and $f$ is such that for each closed set $F\subset Y$, its preimage can be written as countable union of closed sets, then $N$ can be chosen to be perfect.
\end{corollary}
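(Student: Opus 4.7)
My plan is to deduce both claims from \tref{GeneralResult}, applied in two different ways: for the first claim with the Hausdorff measures $\mathcal{H}^{t_k}$ for a sequence $t_k\uparrow t$, and for the moreover part with a single tailored Hausdorff function supplied by \tref{DimensionFunction}. Throughout, $\mathcal{H}^s$ is Borel by \tref{RogersRegular}, and the metric space $X$ is second countable and $T_1$: any set of finite $\mathcal{H}^s$-measure admits countable $\delta$-covers for every $\delta>0$ and is therefore separable. I will assume $t>0$; the case $t=0$ is trivial.

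For the first claim I pick $0<t_k<t$ with $t_k\to t$. Each $x\mapsto x^{t_k}$ is a continuous Hausdorff function of finite order vanishing at $0$, so $(Y,d,\mathcal{H}^{t_k})$ satisfies property~(H); moreover $Y$ is not $\sigma$-finite with respect to $\mathcal{H}^{t_k}$, because a countable decomposition into sets of finite $\mathcal{H}^{t_k}$-measure would force $\dim_H Y\leq t_k<t$. Thus \tref{GeneralResult} produces $N_k\subset X$ with $\mathcal{H}^s(N_k)=0$ and $\mathcal{H}^{t_k}(f(N_k))>0$, whence $\dim_H f(N_k)\geq t_k$. The union $N:=\bigcup_k N_k$ satisfies $\mathcal{H}^s(N)=0$ and $\dim_H f(N)\geq \sup_k t_k=t$, while $f(N)\subset Y$ yields the reverse bound.

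The main obstacle lies in the moreover part: a countable union over subcritical exponents need not be perfect, so I need a single Hausdorff measure on $Y$ that is both non-$\sigma$-finite and controls $\mathcal{H}^{t'}$ for every $t'<t$ simultaneously. \tref{DimensionFunction} provides a continuous Hausdorff function $h$ of finite order with $x^{t'}\prec h\prec x^t$ for all $0<t'<t$. Since $h(x)\geq x^t$ near $0$, we have $\mathcal{H}^h(Y)\geq\mathcal{H}^t(Y)>0$; and if $Y$ were $\sigma$-finite with respect to $\mathcal{H}^h$, then, choosing any $0<t'<t$ and applying \tref{Comparability} to the chain $x^{t'}\prec h\prec x^t$, we would obtain $\mathcal{H}^t(Y)=0$, contradicting the hypothesis. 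Thus $(Y,d,\mathcal{H}^h)$ satisfies \Hinf. The topological hypotheses on $X$ are met and the preimage condition on $f$ matches the moreover clause of \tref{GeneralResult}, which accordingly delivers a perfect $N\subset X$ with $\mathcal{H}^s(N)=0$ and $\mathcal{H}^h(f(N))>0$. Finally, $x^{t'}\prec h$ gives $h(x)\leq \epsilon\, x^{t'}$ for small $x$ and arbitrary $\epsilon>0$, so any finite value of $\mathcal{H}^{t'}(f(N))$ would force $\mathcal{H}^h(f(N))=0$; consequently $\mathcal{H}^{t'}(f(N))=\infty$ for every $0<t'<t$, and therefore $\dim_H f(N)\geq t$.
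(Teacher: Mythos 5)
Your proof is correct and follows essentially the same route as the paper: the first claim via \tref{GeneralResult} applied to a sequence of subcritical exponents $t_k\uparrow t$, and the moreover part via the Hausdorff function $h$ of \tref{DimensionFunction} combined with \tref{Comparability} and the perfect-set clause of \tref{GeneralResult}. The only (harmless) divergences are that the paper first reduces to the case where $Y$ is $\sigma$\nobreakdash-finite with respect to $\mathcal{H}^t$ before invoking \tref{Comparability}, whereas you deduce non-$\sigma$\nobreakdash-finiteness of $Y$ with respect to $\mathcal{H}^h$ directly from $\mathcal{H}^h(Y)\geq\mathcal{H}^t(Y)>0$, and that you spell out the separability of $X$ needed for the countable-basis hypothesis, which the paper leaves implicit.
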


\begin{proof}
If $t=0$, then $N=\emptyset$ does the job. Otherwise, using \tref{GeneralResult}, we choose for each $n$ with $t-1/n>0$ a set $N_n$ such that $\mathcal{H}^s(N_n)=0$ and $\mathcal{H}^{t-1/n}(f(N))>0$. The union of the sets $N_n$ is as required.
For the second statement, we may by \tref{GeneralResult} assume that $Y$ is $\sigma$\nobreakdash-finite with respect to $\mathcal{H}^t$. We choose $h$ as in \tref{DimensionFunction}. By \tref{Comparability}, $(Y,d,\mathcal{H}^h)$ satisfies \Hinf. \tref{GeneralResult} provides us with a set $N$ (perfect under the additional assumption on $f$) with $\mathcal{H}^s(N)=0$ and $\mathcal{H}^h(f(N))>0$, thus with $\dim_H f(N)=t$ by \tref{DimensionFunction} and \tref{Comparability}.
\end{proof}

We look now at targets that are not separable. These spaces are so
large that it is no problem to find uncountably many of the desired disjoint sets.

For the following \nref{Tarski}, see for example Proposition~1.14 in \cite{Levy}:
\begin{theorem}[Tarski]\tlabel{Tarski}
The axiom of choice is equivalent to the following statement: for
every infinite set $A$, there is a bijection between $A$ and
$A\times A$.
\end{theorem}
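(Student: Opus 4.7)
The plan is to prove both implications separately. The easier direction, that the axiom of choice implies that every infinite set $A$ admits a bijection with $A\times A$, is a standard cardinal arithmetic fact. Assuming AC, every set can be well-ordered, so every infinite set has a cardinality that is an aleph $\aleph_\alpha$. I would then show by transfinite induction on $\alpha$ that $\aleph_\alpha\cdot \aleph_\alpha=\aleph_\alpha$, by well-ordering $\aleph_\alpha\times \aleph_\alpha$ with the G\"odel pairing order (compare $(\xi,\eta)$ and $(\xi',\eta')$ first by $\max\{\xi,\eta\}$ versus $\max\{\xi',\eta'\}$, then lexicographically) and verifying that every proper initial segment of this order has cardinality strictly less than $\aleph_\alpha$.

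The nontrivial direction is the converse, and this is where the main obstacle lies, since we must build a well-ordering of an arbitrary set without any appeal to choice beyond the hypothesis. First I would recall Hartogs' theorem, which does not require AC: for any set $X$, there exists a least ordinal $\aleph(X)$ that does not inject into $X$. I would take $X$ to be an arbitrary infinite set and form the disjoint union $A:=X\sqcup \aleph(X)$, which is infinite. By hypothesis, there is a bijection $\varphi\colon A\to A\times A$. Restricting $\varphi$ to $X\times \aleph(X)\subset A\times A$, pulling back via $\varphi^{-1}$ gives an injection $g\colon X\times \aleph(X)\to X\sqcup \aleph(X)$.

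The crucial step is to extract from $g$ an injection $X\to \aleph(X)$ without making any choices. For each $x\in X$, consider the slice map $\alpha\mapsto g(x,\alpha)$ from $\aleph(X)$ into $X\sqcup \aleph(X)$. If this slice were entirely contained in the $X$\nobreakdash-summand, we would obtain an injection $\aleph(X)\hookrightarrow X$, contradicting the definition of $\aleph(X)$. Hence for every $x\in X$ the set $\{\alpha\in \aleph(X):\, g(x,\alpha)\in \aleph(X)\}$ is nonempty, and because $\aleph(X)$ is well-ordered we may define $\alpha_x$ to be its least element (no choice principle is invoked here). The map $x\mapsto g(x,\alpha_x)$ lands in $\aleph(X)$ and is injective by injectivity of $g$. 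Therefore $X$ injects into the ordinal $\aleph(X)$ and hence is well-orderable. Since $X$ was arbitrary, the well-ordering principle holds, which is equivalent to the axiom of choice, and the proof is complete. The main obstacle, as noted, is the choice-free extraction in the last step, which is resolved by exploiting that $\aleph(X)$ is itself well-ordered.
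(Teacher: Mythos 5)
Your proposal is correct, and in fact there is nothing in the paper to compare it against: the author does not prove Tarski's theorem but simply cites it (``see for example Proposition~1.14 in \cite{Levy}''). What you have written is the classical Tarski argument, essentially the one found in the standard references. The forward direction via well-ordering and the G\"odel pairing order is routine, and the key point of the converse --- extracting an injection $X\to\aleph(X)$ from the injection $g\colon X\times\aleph(X)\to X\sqcup\aleph(X)$ by taking, for each $x$, the \emph{least} ordinal $\alpha_x$ with $g(x,\alpha_x)$ in the ordinal summand --- is exactly the right choice-free device; you correctly justify its well-definedness (each slice cannot land entirely in $X$ by the defining property of the Hartogs number) and its injectivity (from that of $g$). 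Two cosmetic points only: you should arrange the disjoint union as a genuine disjoint union (e.g.\ tag the summands) so that ``lands in the $X$-summand'' is unambiguous, and you should remark that $X\sqcup\aleph(X)$ is infinite so the hypothesis applies --- both are immediate. With those trivial remarks added, the argument is complete.
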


\begin{lemma}\tlabel{uncountablePartition}
Every uncountable set $A$ can be written as union of $\card(A)$\nobreakdash-many
pairwise disjoint uncountable sets.
\end{lemma}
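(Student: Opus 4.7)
The plan is to reduce the statement to a trivial product decomposition via Tarski's \tref{Tarski}.

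First, since $A$ is uncountable, it is in particular infinite, so \tref{Tarski} yields a bijection $\varphi\colon A\times A\to A$. The set $A\times A$ admits the canonical partition
\begin{equation*}
A\times A=\bigcup_{a\in A}\bigl(\{a\}\times A\bigr),
\end{equation*}
whose pieces are pairwise disjoint and indexed by $A$ itself.

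Next, for each $a\in A$ define $B_a:=\varphi(\{a\}\times A)$. Since $\varphi$ is a bijection, the family $\{B_a\}_{a\in A}$ inherits pairwise disjointness, its union is all of $A$, and each $B_a$ is in bijection with $\{a\}\times A$ and therefore with $A$ itself; in particular $\card(B_a)=\card(A)$, so each $B_a$ is uncountable. The indexing set has cardinality $\card(A)$, so we obtain the desired decomposition into $\card(A)$-many pairwise disjoint uncountable sets.

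The only subtlety is that the argument relies on the full strength of the axiom of choice through Tarski's equivalence; given \tref{Tarski}, there is no further obstacle.
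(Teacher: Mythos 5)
Your proof is correct and follows essentially the same route as the paper: invoke Tarski's theorem to get a bijection between $A\times A$ and $A$, then push forward the canonical partition of $A\times A$ into slices (the paper uses $A\times\{\alpha\}$ rather than $\{a\}\times A$, which is immaterial). No gaps.
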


\begin{proof}
Assume that $A$ is uncountable. By Tarski's \tref{Tarski}, we find a
bijection $f\colon A\times A\surj A$. We define
\begin{equation*}
\mathcal{A}:=\{A_\alpha:=f(A\times\{\alpha\}),\, \alpha\in A\}.
\end{equation*}
We claim that $\mathcal{A}$ is a partition as described in the statement of the \nref{uncountablePartition}. Given a
set $A_\alpha$, its cardinality is the same as the one of
$A\times\{\alpha\}$ and thus of $A$. Hence $A_\alpha$ is
uncountable. Given distinct $\alpha$ and $\beta$, then
$A\times\{\alpha\}$ and $A\times\{\beta\}$ are disjoint, and since $f$
is injective, their images under $f$ are disjoint as well.
Finally, we see that the cardinality of $\mathcal{A}$ is the same
as the one of $A$.
\end{proof}

\begin{lemma}\tlabel{notSeparable} Assume that $\kappa$ is a cardinal with $\aleph_0\leq \kappa$. Suppose that $(Y,d)$ is a metric space that does not contain any dense set of cardinality less or equal than $\kappa$. Then there
exists a collection of pairwise disjoint closed sets that have infinite measure for every Hausdorff measure $\mathcal{H}^h$, and the cardinality $\lambda$ of the collection is strictly larger than $\kappa$.
\end{lemma}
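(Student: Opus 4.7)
The plan is to extract a large $\epsilon$\nobreakdash-separated subset of $Y$, partition it into many uncountable pieces via \tref{uncountablePartition}, and observe that each piece is automatically closed and has infinite $\mathcal{H}^h$\nobreakdash-measure for every Hausdorff function $h$.

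The first step is to produce $\epsilon>0$ together with an $\epsilon$\nobreakdash-separated set $S\subset Y$ (meaning $d(x,y)\geq\epsilon$ for all distinct $x,y\in S$) of cardinality strictly greater than $\kappa$. I would argue by contradiction: if for every $n\in\field{N}$ every $(1/n)$\nobreakdash-separated subset of $Y$ had cardinality at most $\kappa$, then Zorn's \tref{Zorn} would produce a maximal (and hence $(1/n)$\nobreakdash-dense) $(1/n)$\nobreakdash-separated subset $D_n$ with $\card(D_n)\leq\kappa$. The union $D:=\cup_n D_n$ would then be dense in $Y$ with $\card(D)\leq\aleph_0\cdot\kappa=\kappa$ (using $\aleph_0\leq\kappa$), contradicting the non\nobreakdash-separability hypothesis on $Y$.

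Next I would record that any $\epsilon$\nobreakdash-separated set is automatically closed and discrete, since a convergent sequence of pairwise distinct points in it would eventually have successive terms at distance less than $\epsilon$. Applying \tref{uncountablePartition} to the uncountable set $S$, I obtain a partition $S=\cup_{\alpha\in A}S_\alpha$ into pairwise disjoint uncountable subsets $S_\alpha$ with $\card(A)=\card(S)=:\lambda>\kappa$. Each $S_\alpha\subset S$ inherits $\epsilon$\nobreakdash-separation, is uncountable, and is therefore closed.

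Finally, I would verify that $\mathcal{H}^h(S_\alpha)=\infty$ for every Hausdorff function $h$. By definition, any $\delta$\nobreakdash-cover of $S_\alpha$ is countable and consists of sets of diameter at most $\delta$; for $\delta<\epsilon$, each such set meets the $\epsilon$\nobreakdash-separated set $S_\alpha$ in at most one point, so the uncountable $S_\alpha$ admits no such cover. Hence $\Omega_\delta(S_\alpha)=\emptyset$ and $\Lambda^\xi_\delta(S_\alpha)=\inf\emptyset=\infty$ for $\delta<\epsilon$, giving $\mathcal{H}^h(S_\alpha)=\infty$. I do not foresee a serious obstacle; the only mildly delicate point is the first step, turning the abstract non\nobreakdash-separability assumption into a uniformly discrete set of the required size, which Zorn's Lemma dispatches cleanly.
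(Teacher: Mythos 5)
Your proposal is correct and follows essentially the same route as the paper: Zorn's Lemma to extract maximal $(1/n)$\nobreakdash-separated sets, density of their union to force one of them to have cardinality exceeding $\kappa$, \tref{uncountablePartition} to split it into $\lambda$\nobreakdash-many uncountable separated pieces, and the observation that an uncountable $\epsilon$\nobreakdash-separated set is closed and admits no countable $\delta$\nobreakdash-cover for $\delta<\epsilon$, so every Hausdorff measure is infinite. The only difference is cosmetic: you phrase the first step as a direct contradiction with the non\nobreakdash-separability hypothesis, while the paper first builds all the maximal sets and then derives the same contradiction.
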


\begin{proof}
First, we want to construct a set whose cardinality is larger than $\kappa$, and whose elements are
separated so that the measure of the set is infinite. We do this
with the help of Zorn's \tref{Zorn}.

Fix a natural number $n\in \field{N}$. We let
\begin{equation*}
\mathbf{M}_n:=\{M_\alpha\}_\alpha
\end{equation*}
be the collection of all sets $M_\alpha$ such that
if $x$ and $y$ are distinct points in $M_\alpha$, then
$d(x,y)\geq 1/n$. We define an order $\leq$ on $\mathbf{M}_n$ simply by set
inclusion: $C\leq D$ if and only if $C\subset D$. It is clear that $Y$ is not empty, and we can take $y_0\in Y$. Then the
set $\{y_0\}$ is in $\mathbf{M}_n$ showing that $\mathbf{M}_n$ is not empty. Assume that
$\{A_\alpha\}_{\alpha\in J}$ is a chain in $\mathbf{M}_n$. We claim that
\begin{equation*}
\mathcal{U}:=\cup_{\alpha\in J} A_\alpha
\end{equation*}
is an upper bound for the chain in $\mathbf{M}_n$. Given two distinct
points $x$ and $y$ in $\mathcal{U}$, they belong a priori to two
different elements $A_\alpha$ and $A_\beta$. However, by the chain
property, one set is contained in the other, and hence we can
assume that $x$ and $y$ belong to the same set, and hence
$d(x,y)\geq 1/n$. That $\mathcal{U}$ is larger than any element in
the chain is clear. Applying Zorn's Lemma~\pref{Zorn}, we conclude
the existence of a maximal set $\mathcal{M}_n$, whose elements are
all at least $1/n$ apart. We set
\begin{equation*}
\mathcal{M}:=\cup_n \mathcal{M}_n.
\end{equation*}
We assume by contradiction that $\mathcal{M}$ and hence every
$\mathcal{M}_n$ has cardinality at most $\kappa$. The contradiction will follow as soon as we have shown that $\mathcal{M}$ is dense in $Y$. Let $\varepsilon>0$ and $y\in Y$. We
choose a natural number $n\in \field{N}$ such that
$1/n<\varepsilon$. If $y$ would be such that $d(x,y)\geq 1/n$ for
all $x\in \mathcal{M}_n$, then we could add $y$ to $\mathcal{M}_n$
contradicting the maximality of $\mathcal{M}_n$. Hence, there exists $x\in
\mathcal{M}_n$ with $d(x,y)<1/n<\varepsilon$. Since $y$ and
$\varepsilon>0$ were arbitrary, it follows that $\mathcal{M}$ is a
dense subset of $Y$ with cardinality bounded from above by $\kappa$ leading to a contradiction. Hence the cardinality of $\mathcal{M}$ is strictly larger than $\kappa$.
We can conclude that this is true as well for one of the sets $\mathcal{M}_n$.
In \tref{uncountablePartition}, we have verified that we can write
$\mathcal{M}_n$ as union of $\card(\mathcal{M}_n)$\nobreakdash-many pairwise
disjoint uncountable sets $(N_\alpha)_\alpha$. Since any $N_\alpha$ contains
uncountably many points, which are separated, it does not have a
countable cover with sets of diameter smaller than $1/n$, and hence
any Hausdorff measure of $N_\alpha$ is infinite. Furthermore, since $N_\alpha$ consists only of separated
points, it is closed.
\end{proof}

\begin{theorem}\tlabel{GeneralResultNonSeparable}
Suppose that $\kappa\geq \aleph_0$ is a cardinal. We assume that $X=(X,\mu)$ is a measure space satisfying (A$\kappa$). We stipulate that $(Y,d,\mathcal{H}^h)$ is a metric measure space, where $\mathcal{H}^h$ is a Hausdorff measure.

Suppose that any dense set in $Y$ has cardinality strictly larger than $\kappa$.

If $f\colon X\surj Y$ is a measurable surjection, then there is a set $N\subset X$ such that $\mu(N)=0$ and $\mathcal{H}^h(f(N))>0$.

Moreover, if $X$ is additionally equipped with a topology with a countable basis and such that points are closed, and $f$ is additionally such that for every closed set $F$, the preimage $f^{-1}(F)$ can be written as countable union of closed sets, then $N$ can be chosen to be perfect.
\end{theorem}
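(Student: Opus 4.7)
The plan is to mirror the proof of \tref{GeneralResult}, replacing the appeal to Howroyd's \tref{HowroydCor} by \tref{notSeparable} as the source of many pairwise disjoint positive-measure sets in the target. Concretely, I would first apply \tref{notSeparable} with the given cardinal $\kappa$: its hypothesis is exactly that no dense subset of $Y$ has cardinality at most $\kappa$, and its conclusion is a family $\{Y_j\}_{j\in J}$ of pairwise disjoint closed subsets of $Y$, each of infinite (and in particular positive) $\mathcal{H}^h$-measure, with $\lambda:=\card(J)>\kappa$.

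Next I would feed these sets into \tref{StandaAbstract} with $\nu:=\mathcal{H}^h$ and the metric topology on $Y$; the $Y_j$ are closed, hence Borel, and of positive $\nu$-measure. To verify the cardinal hypothesis of \tref{StandaAbstract}, suppose $\eta$ satisfies $\kappa\cdot\eta=\lambda$. Then $\eta$ cannot be finite, else $\kappa\cdot\eta=\kappa<\lambda$, so by standard cardinal arithmetic with $\kappa\geq\aleph_0$ we have $\kappa\cdot\eta=\max(\kappa,\eta)$, and $\lambda>\kappa$ forces $\eta=\lambda>\kappa\geq\aleph_0$, giving $\aleph_0<\eta$ as required. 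Since $X$ satisfies (A$\kappa$) and $f$ is a measurable surjection, \tref{StandaAbstract} produces a set $N\subset X$ with $\mu(N)=0$ and $\mathcal{H}^h(f(N))>0$, and its moreover clause allows us to take $N=f^{-1}(Y_{j_0})$ for a single index $j_0$.

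For the moreover part I would repeat verbatim the closing lines of the proof of \tref{GeneralResult}. The extra assumption on $f$ writes $N=f^{-1}(Y_{j_0})=\bigcup_n F_n$ with each $F_n$ closed in $X$; countable subadditivity of $\mathcal{H}^h$ combined with $\mathcal{H}^h(f(N))>0$ forces $\mathcal{H}^h(f(F_{n_0}))>0$ for some $n_0$, while $\mu(F_{n_0})\leq\mu(N)=0$. The closed subspace $F_{n_0}$ is $T_1$ with a countable base, so Cantor--Bendixson (\tref{CantorBendixson}) decomposes $F_{n_0}=P\cup C$ with $P$ perfect in $F_{n_0}$ (hence closed in $X$) and $C$ countable; under the standing convention $h(0)=0$, countable sets have $\mathcal{H}^h$-measure zero, whence $\mathcal{H}^h(f(P))\geq\mathcal{H}^h(f(F_{n_0}))-\mathcal{H}^h(f(C))>0$, and $P$ is the desired perfect set. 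The only genuinely new ingredient relative to \tref{GeneralResult} is the short piece of cardinal arithmetic needed to feed \tref{notSeparable} into \tref{StandaAbstract}; everything else is a direct reassembly of previously established tools, and I do not anticipate a serious obstacle.
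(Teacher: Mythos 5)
Your proposal matches the paper's own proof, which simply states that the argument is the same as for \tref{GeneralResult} with $\lambda$ and the pairwise disjoint closed sets $Y_j$ supplied by \tref{notSeparable} instead of \tref{HowroydCor}, and with the moreover part handled exactly as before. You are in fact more explicit than the paper on two points it leaves tacit: the cardinal-arithmetic check that $\kappa\cdot\eta=\lambda$ forces $\aleph_0<\eta$, and the need for $h(0)=0$ (which this theorem's hypotheses do not actually state) so that the countable Cantor--Bendixson remainder has image of measure zero.
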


\begin{proof} The proof is essentially as the one of \tref{GeneralResult}. In \tref{StandaAbstract}, $\lambda$ is as indicated in \tref{notSeparable}, and again the sets $Y_j$ are closed. The moreover-part is proven exactly as in the proof of \tref{GeneralResult}.
\end{proof}

\section{Some topological considerations}\label{Topology}
Until now, we required $Y$ to be an analytic subset of a complete,
separable metric space. We want to get rid of this
restriction. However, the price we pay is the introduction of some additional
regularity conditions on $X$.

They provide us to find a countable partition of $X$ by
\emph{compact} sets $K_n$ (and a set of measure zero) such that the restrictions of $f$ to the
sets $K_n$ are continuous. Therefore, the images $Y_n=f(K_n)$ are compact as well.

Our goal in this section is to show that a
$\sigma$\nobreakdash-finite, $\sigma$\nobreakdash-compact space
can be written as countable union of compact sets with finite measure
and a set of measure zero. To achieve this, we put some restrictions
on the measure and the topology.

The main point in the following \nref{UnionOfCompacts} is the fact that
we can choose the compact sets to have \emph{finite} measure. We will
later on study the existence of the desired
$\mathcal{F}_\sigma$\nobreakdash-sets.

\begin{proposition}\tlabel{UnionOfCompacts}
Let $(X,\mathfrak{T})$ be a topological Hausdorff space and $\mu$ a measure
on $X$ that is $\sigma$\nobreakdash-finite and has the property
that each measurable set of finite measure contains an
$\mathcal{F}_\sigma$\nobreakdash-set with the same measure.
Suppose that $X$ can be written as countable union of compact sets
and a set of measure zero. Then we can write $X$ as countable
union of compact sets with \emph{finite} measure and a set of
measure zero.
\end{proposition}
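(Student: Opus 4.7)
The plan is to produce the required family of compact sets of finite measure by intersecting two coverings of $X$: the compact-plus-null cover supplied by hypothesis, and a cover by closed sets of finite measure that I will extract from $\sigma$-finiteness together with the $\mathcal{F}_\sigma$-regularity assumption. The intersection of a closed set with a compact set is compact, and monotonicity gives the finite-measure bound, so the only thing to check after the construction is that the resulting family still covers $X$ up to a null set.

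First I would invoke $\sigma$-finiteness to write $X=\bigcup_{m}M_m$ with $M_m$ measurable and $\mu(M_m)<\infty$. For each $m$ the stated regularity property furnishes an $\mathcal{F}_\sigma$-set $F_m\subset M_m$ with $\mu(F_m)=\mu(M_m)$; write $F_m=\bigcup_k F_{m,k}$ with each $F_{m,k}$ closed in $X$. Reading the hypothesis in the natural way, $F_m$ is measurable, so from $F_m\subset M_m$ and $\mu(F_m)=\mu(M_m)<\infty$ one concludes $\mu(M_m\setminus F_m)=0$, hence $\mu\bigl(X\setminus\bigcup_{m,k}F_{m,k}\bigr)\leq\sum_m\mu(M_m\setminus F_m)=0$.

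Next, using the hypothesis that $X=\bigcup_n K_n\cup N_0$ with $K_n$ compact and $\mu(N_0)=0$, I would consider the family
\begin{equation*}
\{\,K_n\cap F_{m,k}:n,m,k\in\field{N}\,\}.
\end{equation*}
Each $K_n\cap F_{m,k}$ is closed in $K_n$ (as the trace of a closed subset of $X$) and is therefore compact, while monotonicity of $\mu$ yields $\mu(K_n\cap F_{m,k})\leq\mu(F_{m,k})\leq\mu(M_m)<\infty$. Thus we have produced a countable family of compact sets of finite measure, as desired.

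It remains to see that this family covers $X$ up to a $\mu$-null set. This is the elementary set-theoretic identity
\begin{equation*}
X\setminus\bigcup_{n,m,k}(K_n\cap F_{m,k})\subset\Bigl(X\setminus\bigcup_n K_n\Bigr)\cup\Bigl(X\setminus\bigcup_{m,k}F_{m,k}\Bigr)\subset N_0\cup\bigcup_m(M_m\setminus F_m),
\end{equation*}
a countable union of null sets, and therefore null by subadditivity. I do not anticipate a genuine obstacle; the one delicate point to state carefully is that the $\mathcal{F}_\sigma$-subset supplied by the regularity hypothesis is measurable, since only then does equality of (outer) measures upgrade to the vanishing of $\mu(M_m\setminus F_m)$ needed at the end.
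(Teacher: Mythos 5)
Your argument is correct and is essentially the paper's own proof: both decompose $X$ via $\sigma$\nobreakdash-finiteness, replace each finite-measure piece by an $\mathcal{F}_\sigma$\nobreakdash-set of the same measure, and intersect the resulting closed sets with the compact sets of the given cover to obtain countably many compact sets of finite measure covering $X$ up to a null set. The measurability caveat you flag for the $\mathcal{F}_\sigma$\nobreakdash-sets is implicitly present in the paper's proof as well (it simply writes $X_n=\widehat{Z}_n\cup N_n$ with $\mu(N_n)=0$), and it is harmless in the intended applications, where $\mu$ is Borel.
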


\begin{proof}
The nature of the statement allows us to assume that $X$ is
$\sigma$\nobreakdash-finite and $\sigma$\nobreakdash-compact.

By definition, we can decompose $X=\cup_n X_n$, where all $X_n$ are measurable and have finite measure. By assumption, we can write
$X_n=\widehat{Z}_n\cup N_n$, where $\widehat{Z}_n$ is an
$\mathcal{F}_\sigma$\nobreakdash-set and $N_n$ has measure zero.
Hence
\begin{equation*}
X=\cup_n Z_n\cup N,
\end{equation*}
where $N$ has measure zero and the sets $Z_n$ are $\mathcal{F}_\sigma$\nobreakdash-sets with finite measure. Hence, each $Z_n$ has the form
\begin{equation*}
Z_n=\cup_l C_l^n,
\end{equation*}
where the sets $C_l^n$ are closed.

On the other hand, we can write
\begin{equation*}
X=\cup_m K_m,
\end{equation*}
where the sets $K_m$ are compact. Now,
\begin{align*}
Z_n&=Z_n\cap \left(\bigcup_m K_m\right)=\bigcup_m(Z_n\cap K_m),\\
Z_n\cap K_m&=\left(\bigcup_l C_l^n\right)\cap K_m=\bigcup_l(C_l^n\cap K_m).
\end{align*}
In topological Hausdorff spaces, compact sets are closed.
As closed subsets of compact sets, the sets $C_l^n\cap K_m$ are compact. It follows that the sets
$Z_n\cap K_m$ and hence $Z_n$ are $\sigma$\nobreakdash-compact.
\end{proof}

\begin{question} Can we weaken the condition in \tref{UnionOfCompacts} that each measurable set of finite measure contains an $\mathcal{F}_\sigma$\nobreakdash-set with the same measure and still obtain the same conclusion?
\end{question}

We further want to study the condition concerning the
$\mathcal{F}_\sigma$\nobreakdash-sets in the last
\nref{UnionOfCompacts}.

The proof of the following result is as the one of Theorem~22 on
p.~35 in \cite{Rogers}:
\begin{theorem}\tlabel{OpenImpliesFSigma}
Let $\mu$ be a Borel measure that is
$\mathcal{G}_\delta$\nobreakdash-regular on a topological space
$(X,\mathfrak{T})$ such that every open set is a
$\mathcal{F}_\sigma$\nobreakdash-set. If $E\subset X$ is
measurable with $\mu(E)<\infty$, then there exists an
$\mathcal{F}_\sigma$\nobreakdash-set $H\subset E$ with
$\mu(H)=\mu(E)$.
\end{theorem}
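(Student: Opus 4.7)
The plan is to mimic the usual \emph{outer $\mathcal{G}_\delta$-hull and null-set subtraction} argument: we sandwich $E$ between two $\mathcal{G}_\delta$-sets, approximate from inside by a closed set, and then remove a $\mathcal{G}_\delta$-null set in order to land inside $E$. Concretely, $\mathcal{G}_\delta$-regularity provides a $\mathcal{G}_\delta$-set $G\supset E$ with $\mu(G)=\mu(E)<\infty$; measurability of $E$ then forces $\mu(G\setminus E)=0$. A second application of $\mathcal{G}_\delta$-regularity furnishes $G_1\in\mathcal{G}_\delta$ with $G_1\supset G\setminus E$ and $\mu(G_1)=0$.

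The heart of the argument is to approximate $G$ from inside by a closed set, and this is where the topological hypothesis enters. Write $G=\cap_n U_n$ with $U_n$ open; replacing $U_n$ by $\cap_{m\leq n}U_m$ we may assume $U_1\supset U_2\supset\cdots$. Using that every open set is $\mathcal{F}_\sigma$, decompose each $U_n=\cup_k C_{n,k}$ into closed pieces arranged to be nested increasing in $k$. Since $G\subset U_n$, the sets $G\cap C_{n,k}$ increase to $G$ as $k\to\infty$, and because $\mu(G)<\infty$, \tref{measureSup} (continuity from below) yields $\mu(G\setminus C_{n,k})\to 0$. Given $\varepsilon>0$, choose $k_n(\varepsilon)$ with $\mu(G\setminus C_{n,k_n(\varepsilon)})<\varepsilon/2^n$ and set $F_\varepsilon:=\cap_n C_{n,k_n(\varepsilon)}$. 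Then $F_\varepsilon$ is closed, contained in $\cap_n U_n=G$, and a union bound gives
\begin{equation*}
\mu(G\setminus F_\varepsilon)\leq \sum_n\mu(G\setminus C_{n,k_n(\varepsilon)})<\varepsilon.
\end{equation*}

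The closed set $F_\varepsilon$ may still meet the null set $G\setminus E$, so we trim it by defining $H_\varepsilon:=F_\varepsilon\setminus G_1$. Writing $X\setminus G_1=\cup_m F'_m$ with $F'_m$ closed, we see that $H_\varepsilon=\cup_m(F_\varepsilon\cap F'_m)$ is a countable union of closed sets, hence $\mathcal{F}_\sigma$. Since $G_1\supset G\setminus E$, we have $H_\varepsilon\subset G\setminus G_1\subset E$; and because $\mu(F_\varepsilon)<\infty$ and $\mu(G_1)=0$, $\mu(H_\varepsilon)=\mu(F_\varepsilon)>\mu(E)-\varepsilon$. Finally $H:=\cup_n H_{1/n}$ is $\mathcal{F}_\sigma$, lies in $E$, and satisfies $\mu(H)=\mu(E)$.

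The only delicate point---and the unique place the topological hypothesis is really used---is the decomposition $U_n=\cup_k C_{n,k}$; without it the $\mathcal{G}_\delta$-hull $G$ need not admit an inner approximation by closed sets in measure, and the diagonal construction of $F_\varepsilon$ would collapse. Apart from that, the proof is bookkeeping: one must keep all measure subtractions in the regime $\mu(G)<\infty$ where they are legal, and arrange the diagonal choices $k_n(\varepsilon)$ so that the errors $\varepsilon/2^n$ sum to $\varepsilon$.
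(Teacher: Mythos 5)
Your proof is correct and is essentially the argument the paper has in mind: the paper gives no proof of its own but defers to Theorem~22 on p.~35 of Rogers, and that proof is exactly your $\mathcal{G}_\delta$-hull of $E$, inner approximation of the hull by the closed sets $\cap_n C_{n,k_n(\varepsilon)}$ via continuity from below, and trimming by a $\mathcal{G}_\delta$ null hull of $G\setminus E$. All the measure subtractions are correctly confined to the finite-measure regime, so nothing further is needed.
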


We now give conditions on a topological space implying that every open
set is an $\mathcal{F}_\sigma$\nobreakdash-set.

\begin{definition}[regular]
A topological space $(X,\mathfrak{T})$ is called \emph{regular} if
points are closed and if for any $x\in X$ and closed set $C\subset
X$ that does not contain $x$, there exist disjoint open subsets $U$ and $V$ of $X$ such
that $x\in U$ and $C\subset V$.
\end{definition}

\begin{remark}\tlabel{NormalHausdorff}
Note that a regular space is Hausdorff.
\end{remark}

\begin{definition}[second countable]
A topological space is \emph{second countable} if it has a
countable basis.
\end{definition}

\begin{lemma}\tlabel{OpenIsFSigma} Let $(X,\mathfrak{T})$ be a regular, second countable
topological space. Then every open set is an
$\mathcal{F}_\sigma$\nobreakdash-set.
\end{lemma}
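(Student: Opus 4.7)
The plan is to combine regularity (which lets us separate a point from a closed set by disjoint open sets) with the countable basis (which lets us pick countably many of these separating opens) to exhibit an open set as a countable union of closures of basic opens.

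First I would fix an open set $U \subset X$ and a countable basis $\mathcal{B} = \{B_n\}_{n \in \mathbb{N}}$ of the topology $\mathfrak{T}$. The preliminary step, which is standard but worth doing carefully, is to upgrade regularity to the statement: for every $x \in U$, there exists an open set $W$ with $x \in W \subset \overline{W} \subset U$. To see this, apply regularity to the point $x$ and the closed set $C := X \setminus U$ (which does not contain $x$), obtaining disjoint open sets $W \ni x$ and $V \supset C$. Then $W \subset X \setminus V$, and the latter is closed and contained in $U$, so $\overline{W} \subset X \setminus V \subset U$.

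Next I would use the basis. Since $\mathcal{B}$ is a basis and $x \in W$, there exists $n = n(x) \in \mathbb{N}$ with $x \in B_n \subset W$, and in particular $\overline{B_n} \subset \overline{W} \subset U$. Let
\begin{equation*}
I := \{\, n \in \mathbb{N} : \overline{B_n} \subset U \,\}.
\end{equation*}
The construction shows that for every $x \in U$ there exists $n \in I$ with $x \in B_n \subset \overline{B_n} \subset U$. Therefore
\begin{equation*}
U = \bigcup_{n \in I} \overline{B_n},
\end{equation*}
and since $I$ is countable and each $\overline{B_n}$ is closed, $U$ is an $\mathcal{F}_\sigma$-set.

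There is no real obstacle here; the only point one must be slightly careful about is the regularity upgrade in the first paragraph (namely that every neighborhood of a point contains the closure of some smaller neighborhood), since that is the form in which regularity gets used rather than the verbatim definition given in the paper. Everything else is bookkeeping with the countable basis.
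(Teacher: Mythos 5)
Your proof is correct and follows essentially the same route as the paper's: upgrade regularity to the statement that every point of an open set has a neighborhood whose closure stays inside, then use the countable basis to write the open set as a countable union of closures of basic open sets. The only cosmetic difference is that you define the index set $I$ by the condition $\overline{B_n}\subset U$ up front, whereas the paper collects the indices as they occur; both yield the same $\mathcal{F}_\sigma$ representation.
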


\begin{proof}
Let $O$ be an open set and choose $x\in O$. We note that
$C:=X\setminus O$ is closed. Since $X$ is regular, there exist
disjoint open sets $U$ and $V$ with $x\in U$ and $C\subset V$. Now
$D:=X\setminus V\subset X\setminus C$ is closed, and since $x$ is
not a point in $V$, it lies in $D$. If $y\in U$, then $y\not\in V$
and hence $y\in D$. Thus
\begin{equation*}
x\in U\subset D\subset X\setminus C=X\setminus(X\setminus O)=O.
\end{equation*}
Consequently, we can choose for every $x\in O$ an open set $U_x$ with $x\in
U_x\subset \overline{U_x}\subset O$.
Let
\begin{equation*}
\mathcal{O}:=\{O_i\}
\end{equation*}
be a countable basis for the topology. Hence, for every $x\in O$ and
$U_x$, we find an open set $O_i$ with $x\in O_i\subset
\overline{O_i}\subset \overline{U_x}\subset O$. We collect all
occurring indices in the set $I$. Then
\begin{equation*}
O=\cup_{i\in I}O_i\subset \cup_{i\in I} \overline{O_i}\subset O,
\end{equation*}
and the claim follows.
\end{proof}

\section{From measurable to continuous}\label{From measurable to continuous}
In Howroyd's \tref{ExistenceOfSubsets}, we have the requirement
that $Y$ is an analytic subset of a complete, separable metric
space. We would like to get rid of some assumptions on $Y$ and add
in turn more requirements on $X$. In this section, we show that $Y$ inherits
the desired topological properties from $X$. For example if $X$ is compact and separable, and $f\colon X\surj Y$ is continuous, then $Y=f(X)$ is compact (thus complete), and separable as well. Thus, we can use the machinery that we developed before.

The following is from Definition~411M in \cite{Fremlin}; however, we have allowed ourselves to change the transcription of Luzin:
\begin{definition}[almost continuous, Luzin measurable]
Let $(X,\mu,\mathfrak{T})$ be a measure space with topology
$\mathfrak{T}$ and $(Y,\mathfrak{S})$ another topological space.
We say that a mapping $f\colon X\to Y$ is \emph{almost continuous}
or \emph{Luzin measurable} if $\mu$ is inner regular with respect
to the family of subsets $A$ of $X$ such that $f\trestriction_A$ is
continuous.
\end{definition}

The main point in\label{meascont} is the verification that our setting permits the application of the following version
of Theorem~451S in \cite{Fremlin}, referenced there to an article by Fremlin and one by Koumoullis and Prikry:
\begin{theorem}[Luzin's theorem]\tlabel{LusinsTheorem}
Let $(X,\mu)$ be a measure space with topology $\mathfrak{T}$ such
that
\begin{itemize}
\item[(a)] whenever a set is measurable with infinite measure, there
exists a subset with positive and finite measure ($\mu$ is
\emph{semi-finite}),
\item[(b)] every point of $X$ has a neighborhood of finite measure
($\mu$ is \emph{locally finite}),
\item[(c)] if $E$ is not measurable, then there exists a measurable set
$F$ of finite measure such that the intersection $E\cap F$ is not
measurable (if additionally $\mu$ is semi-finite, then this is
called \emph{locally determined}),
\item[(d)] the topology is Hausdorff,
\item[(e)] $\mu$ is inner regular with respect to the compact sets.
\end{itemize}
Assume $Y$ is a metrizable space. Then a function $f\colon X\to Y$
is measurable if and only if it is almost continuous.
\end{theorem}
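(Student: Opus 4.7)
The plan is to prove the two implications separately; the harder direction, \emph{measurable $\Rightarrow$ almost continuous}, is the abstract form of the classical Luzin theorem and carries essentially all the technical content.

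For \emph{almost continuous $\Rightarrow$ measurable}, fix an open $O \subset Y$; the task is to show $f^{-1}(O)$ is $\mu$-measurable. Conditions~(a) and~(c) together ensure that a set is measurable as soon as its intersection with every measurable set of finite measure is measurable, so one reduces to showing measurability of $f^{-1}(O) \cap F$ for each measurable $F$ of finite measure. Almost continuity supplies, for each $n \geq 1$, a measurable $A_n \subset F$ with $\mu(F \setminus A_n) < 1/n$ such that $f|_{A_n}$ is continuous. Then $(f|_{A_n})^{-1}(O)$ is relatively open in $A_n$, hence equals $A_n \cap U_n$ for some $U_n$ open in $X$; conditions~(d) and~(e) force closed sets (and therefore open sets) of $X$ into the $\sigma$-algebra of measurable sets, so $A_n \cap U_n$ is measurable. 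Taking the increasing union over $n$ and discarding the null remainder yields measurability of $f^{-1}(O) \cap F$.

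For the converse, given a measurable $E$ and $\varepsilon > 0$, I aim to find a compact $K \subset E$ with $\mu(E \setminus K) < \varepsilon$ and $f|_K$ continuous. Using~(a), (b) and~(e), reduce to the case where $E$ itself is compact and of finite measure. The core step is then, for each $n$, to produce a compact $K_n \subset E$ with $\mu(E \setminus K_n) < \varepsilon/2^n$ together with a finite measurable partition of $K_n$ into pieces on which $f$ has image of diameter at most $1/n$. Setting $K := \bigcap_n K_n$, the oscillation of $f|_K$ vanishes at every scale, so $f|_K$ is continuous and $\mu(E \setminus K) < \varepsilon$.

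The main obstacle is this partitioning step when $Y$ is only assumed metrizable, not separable, so one cannot cover $Y$ by countably many balls of radius $1/(2n)$ directly. My strategy is to cover $Y$ by the entire (possibly uncountable) family of such balls, pull them back to measurable subsets of $E$, and invoke a Zorn-style maximal disjoint subfamily argument: because $\mu(E) < \infty$, only countably many preimages can carry positive measure, and the uncountable residue has outer measure zero and is absorbed into the error. On each of the countably many remaining preimages I apply inner regularity~(e) to extract a compact subset, and finitely many of these exhaust $E$ up to the allowed error. It is precisely in this reduction that semi-finiteness~(a) and local determination~(c) interact with~(e) to legitimise the countable selection.
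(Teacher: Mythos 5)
First, be aware that the paper offers no proof of this statement at all: it is quoted as a version of Theorem~451S in \cite{Fremlin} (there attributed to a paper of Fremlin and one of Koumoullis and Prikry), so you are attempting to reprove a genuinely deep theorem rather than to reproduce an argument from the text. Your ``almost continuous $\Rightarrow$ measurable'' direction is essentially sound, except for the claim that conditions (d) and (e) ``force closed sets (and therefore open sets) of $X$ into the $\sigma$-algebra''. Inner regularity with respect to compact sets, as defined in this paper, only quantifies over \emph{measurable} compact subsets of measurable sets; it does not make arbitrary compact, closed or open subsets of $X$ measurable. Fremlin's hypotheses include that $(X,\mathfrak{T},\Sigma,\mu)$ is a \emph{topological} measure space, i.e.\ $\mathfrak{T}\subset\Sigma$; you need to assume this (as the statement implicitly does, and as holds in the paper's applications where $\mu$ is Borel) rather than derive it, since otherwise the measurability of $A_n\cap U_n$ is unavailable.

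The genuine gap is in the converse, at exactly the point you flag as the main obstacle. A maximal countable disjoint family of positive-measure measurable sets, each contained in some $f^{-1}(B(y,1/(2n)))$, with union $H$, gives you by maximality only that $\mu\bigl((E\setminus H)\cap f^{-1}(B(y,1/(2n)))\bigr)=0$ for every $y\in Y$, i.e.\ that the residue $E\setminus H$ is covered by uncountably many measurable null sets. Your assertion that this residue therefore ``has outer measure zero'' does not follow: a set of positive measure can be a union of uncountably many null sets, as $[0,1]=\bigcup_{x}\{x\}$ shows, so $\sigma$-additivity alone cannot close this step. Nor does a more careful disjointification help directly: pulling back the pairwise disjoint balls $B(y,1/(4n))$ for a maximal separated set $D$ of centres produces a countably additive set function defined on \emph{all} subsets of the uncountable index set $D$ (preimages of arbitrary unions of these open balls are measurable) and vanishing on singletons, which is the territory of real-valued measurable cardinals rather than of elementary measure theory. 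Ruling this situation out is the actual content of Fremlin's 451P--451S: one must first exploit compact inner regularity, together with a $\sigma$-discrete base for the metrizable space $Y$, to show that $f$ is separably valued off a set of arbitrarily small measure, and only then run the classical separable Luzin argument that you correctly describe. As written, your proposal assumes precisely the hard part of the theorem.
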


With the help of Luzin's theorem, we obtain a nice partition of
our space:\label{sectionCheck}

\begin{proposition}\tlabel{ContinuousDecomposition} Let $(X,\mathfrak{T})$ be a compact space
where the topology $\mathfrak{T}$ is Hausdorff. Assume that $\mu$ is a Borel measure on
$X$ that is inner regular with respect to the compact sets and
$\mu(X)<\infty$. Suppose $(Y,\mathfrak{S})$ is metrizable. If
$f\colon X\to Y$ is measurable, then we can write
\begin{equation*}
X=\cup_n K_n\cup N,
\end{equation*}
where $K_n$ is compact, $\mu(N)=0$, and $f\trestriction_{K_n}$ is
continuous.
\end{proposition}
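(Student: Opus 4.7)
The plan is to apply Luzin's \tref{LusinsTheorem} and then promote the sets it produces to compact sets by means of the assumed inner regularity.

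First, I would verify that the hypotheses (a)--(e) of \tref{LusinsTheorem} hold in the present setting. Since $\mu(X) < \infty$, semi-finiteness (a) is vacuous (there are no measurable sets of infinite measure at all), and $X$ itself serves as a neighbourhood of every point with finite measure, giving local finiteness (b). Local determinedness (c) follows by taking $F := X$: if $E \subset X$ is non-measurable, then $E \cap X = E$ is non-measurable and $X$ is measurable of finite measure. The Hausdorff condition (d) is part of the hypothesis, inner regularity with respect to compact sets (e) is assumed, and $Y$ is metrizable. Hence \tref{LusinsTheorem} yields that $f$ is almost continuous, i.e.\ $\mu$ is inner regular with respect to the family $\mathcal{F}$ of subsets $A \subset X$ such that $f\trestriction A$ is continuous.

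Next, I would apply this almost continuity together with inner regularity with respect to compact sets. For each $n \in \field{N}$, almost continuity furnishes a measurable $A_n \subset X$ with $f\trestriction A_n$ continuous and $\mu(X \setminus A_n) < 1/(2n)$; then inner regularity with respect to compact sets, applied to $A_n$, produces a compact $K_n \subset A_n$ with $\mu(A_n \setminus K_n) < 1/(2n)$. Since the subspace topology on $K_n$ induced from $A_n$ agrees with the one induced from $X$, the restriction $f\trestriction K_n$ inherits continuity from $f\trestriction A_n$. Thus $K_n$ is a compact set on which $f$ is continuous with $\mu(X \setminus K_n) < 1/n$.

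Finally, I would set $N := X \setminus \bigcup_{n} K_n$. Then $N \subset X \setminus K_n$ and so $\mu(N) \leq 1/n$ for every $n$, giving $\mu(N) = 0$ and the desired decomposition. The content of the argument is concentrated entirely in Luzin's theorem; the only mild point of care is that ``almost continuity'' in the sense of \tref{LusinsTheorem} delivers \emph{measurable} witnesses (so that inner regularity with respect to compact sets can be applied to them), and that continuity of a map on a subspace is preserved when passing to a further subspace.
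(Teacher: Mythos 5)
Your proposal is correct and follows essentially the same route as the paper: verify the hypotheses of Luzin's \tref{LusinsTheorem}, extract measurable sets $A_n$ on which $f$ is continuous with small complement, pass to compact subsets via inner regularity, and let $N$ be the leftover null set. The only cosmetic difference is that you verify local determinedness directly by taking $F:=X$, whereas the paper cites Theorem~211L in \cite{Fremlin} for this point.
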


\begin{proof}
Note that by Theorem~211L in \cite{Fremlin}, $(X,\mu)$ is locally
determined. The setting is such that we can apply
\tref{LusinsTheorem} to obtain a sequence $(A_m)_m$ of measurable sets
in $X$ such that the restriction $f\trestriction_{A_m}$ of $f$ to
$A_m$ is continuous and
\begin{equation*}
\mu(X)<\mu(A_m)+\frac{1}{2m}.
\end{equation*}
Using the inner regularity of $\mu$ with respect to compact sets,
we obtain compact sets $K_m$ contained in $A_m$ such that
\begin{equation*}
\mu(X)<\mu(A_m)+\frac{1}{2m}<\mu(K_m)+\frac{1}{m}.
\end{equation*}
We set $N:=X\setminus \cup_m K_m$ and deduce that
\begin{equation*}
\mu(N)=\mu(X\setminus \cup_m K_m)=\mu(\cap_m (X\setminus
K_m))\leq \mu(X\setminus K_{m_0})\leq\mu(X)-\mu(K_{m_0})<\frac{1}{m_0}
\end{equation*}
for every $m_0\in \field{N}$. The claim follows.
\end{proof}

\section{\texorpdfstring{Proofs of \stref{MainInfinite} and \stref{Dimension}}{Proof of Theorem~\ref{MainInfiniteLab} and of Corollary~\ref{Dimension}}}\label{Section proof of main result}

\begin{proof}[Proof of \tref{MainInfinite}] First, we want to write $X$ as countable union of
compact sets with finite measure and a set of measure zero.
According to \tref{UnionOfCompacts}, it suffices to verify that
each measurable set of finite measure contains an
$\mathcal{F}_\sigma$\nobreakdash-set of the same measure (regular spaces are Hausdorff).
\tref{OpenImpliesFSigma} reduces the this task to the verification
that open sets are $F_\sigma$\nobreakdash-sets. Under our
assumptions, this follows from \tref{OpenIsFSigma}. Summarizing,
we may now apply \tref{UnionOfCompacts} (we noted in \tref{NormalHausdorff} that $X$ is Hausdorff; the same is true for
the sets $K_n$ in the following decomposition), i.e.\ we may write
\begin{equation*}
X=\cup_n K_n\cup N,
\end{equation*}
where the sets $K_n$ are compact with finite measure, and $N$ has
measure zero. We want to replace each compact set $K_n$ by a union of compact sets and a set of measure zero such that $f$ restricted to each of these new compact sets is continuous. In view of \tref{ContinuousDecomposition}, let us verify the inner regularity with respect to compact sets of the restriction of $\mu$ to $K_n$ denoted by $\mu\trestriction_{K_n}$. If $E\subset K_n$ is
measurable, then we have argued before that it contains an
$\mathcal{F}_\sigma$\nobreakdash-set $F\subset E$ with the same
measure. Hence $F$ can be written as countable union of closed, and since $X$
is compact, of compact sets. Let us denote these compact sets by
$L_m$ and note that, by replacing $L_m$ by the union $\cup_{r=1}^m
L_r$ if necessary, the inclusion $L_m\subset L_{m+1}$ holds. By
\tref{measureSup}, we have
\begin{equation*}
\mu(E)=\mu(F)=\sup_m \mu(L_m),
\end{equation*}
verifying the inner regularity of $\mu\trestriction_{K_n}$ with
respect to compact sets. By \tref{ContinuousDecomposition}, we can
now write
\begin{equation*}
X=\cup_n M_n\cup M_0
\end{equation*}
where $\mu(M_0)=0$, the sets $M_n$ are compact with
$\mu(M_n)<\infty$, and $f\trestriction_{M_n}$ is continuous. If
$\mathcal{H}^h(f(M_0))>0$, then we are done. Otherwise, by the
fact that $\mathcal{H}^h$ is $G_\delta$\nobreakdash-regular as noted in
\tref{RogersRegular}, there is a Borel set $B\supset f(M_0)$ with
$\mathcal{H}^h(B)=0$. Note that
\begin{equation*}
Y=\cup_n f(M_n)\cup B
\end{equation*}
is, as the sets $f(M_n)$ are compact, a countable union of Borel
sets. If each set $f(M_n)$ would have a representation as
countable union of measurable sets with finite measure, then this would
also be true for $Y$. By our assumption on $Y$, this is not possible.
Hence, there is a set $M_n$ such that $f\trestriction_{M_n}\colon
M_n\surj f(M_n)$ is such that we can apply \tref{GeneralResult}. The statement follows.
\end{proof}

\begin{question} Can the set $N$ in \tref{MainInfinite} be chosen to be perfect? The problem lies in the case where $\mathcal{H}^h(f(M_n))>0$ implies that $n=0$.
\end{question}

\begin{proof}[Proof of \tref{Dimension}]
The proof is basically as the one of \tref{DimensionBusiness}. We just apply \tref{MainInfinite} instead of \tref{GeneralResult}.
\end{proof}

\section{Space-fillings}\label{Section space fillings}
We have been talking about surjections from one space onto another one. Here, we give a result concerning their existence. As blueprints for their constructions, the proofs of Theorem~1.3 in \cite{HajlaszTyson} and of Theorem~5.1 in \cite{Paper2} were used. See also Section~2 in \cite{AzzamSchul}.
\label{ChapSF}\begin{theorem}\tlabel{SpaceFilling}
Let $(X,d)$ be a locally compact metric space. Suppose $h$ is a Hausdorff function with $\lim_{t\to 0} h(t)=0$. Let $Y$ be any non-empty length-compact\footnote{This means that $Y$ is compact with respect to the path metric.} metric space. Suppose that $P\subset X$ is a non-empty perfect set. Then there exists  a compact, perfect set $P'\subset P$ and a continuous surjection $f\colon X\surj Y$ such that $\mathcal{H}^h(P')=0$ and $f(P')=Y$.
\end{theorem}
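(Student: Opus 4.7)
The approach is a Peano-type Cantor construction: simultaneously build a Cantor-like set $P'\subset P$ with $\mathcal{H}^h(P')=0$ and define $f\colon X\surj Y$ as the uniform limit of continuous approximations $f_n\colon X\to Y$ that match the tree structure of $P'$ against successively denser nets in $Y$. First I would construct $P'$. Since $h(t)\to 0$ as $t\to 0$, fix rapidly shrinking $\delta_n\downarrow 0$ and integer branching numbers $a_n\geq 2$ so that $N_n:=a_1\cdots a_n$ satisfies $N_n h(\delta_n)\to 0$. Using perfectness of $P$ and local compactness of $X$, recursively produce a tree of compact cells $C_\sigma\subset P$ of diameter $\leq \delta_{|\sigma|}$, where each $C_\sigma$ splits into $a_{|\sigma|+1}$ pairwise disjoint children $C_{\sigma j}\subset C_\sigma$, each still containing an accumulation point of $P$. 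Then $P':=\bigcap_n\bigcup_{|\sigma|=n}C_\sigma$ is compact and perfect (in fact homeomorphic to a Cantor set), and the level-$n$ cover yields $\mathcal{H}^h(P')\leq \lim_n N_n h(\delta_n)=0$.

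On the target, length-compactness means $Y$ is compact in its path metric $d_\ell$, so $Y$ is path-connected and $d_\ell$-bounded, and any two $d_\ell$-close points can be joined by a path of $d_Y$-diameter bounded by their $d_\ell$-distance. Pick a mesh $\varepsilon_n\downarrow 0$ and at each level $n$ a finite labeling $\{y_\sigma:|\sigma|=n\}\subset Y$ such that the $a_n$ children $\{y_{\sigma j}\}$ of each $y_\sigma$ at level $n-1$ are $\varepsilon_n$-dense in the $d_\ell$-ball $B_{d_\ell}(y_\sigma,\varepsilon_{n-1})$; the branching $a_n$ must be chosen large enough for this to be possible (which it is, since $(Y,d_\ell)$ is totally bounded), after which $\delta_n$ is shrunk if needed to preserve $N_n h(\delta_n)\to 0$. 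This refinement guarantees that for every $y\in Y$ some infinite branch $\xi$ through the tree satisfies $d_\ell(y_{\xi|_n},y)\leq \varepsilon_n$. For each parent--child pair fix a continuous path $\gamma_{\sigma\to\sigma j}\colon[0,1]\to Y$ from $y_\sigma$ to $y_{\sigma j}$ with $d_Y$-diameter $\leq C\varepsilon_{n-1}$.

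To define $f$, pick nested compact neighborhoods $V_\sigma\supset C_\sigma$ with non-empty interior, and open thickenings $U_\sigma\supset V_\sigma$, such that at each level the $V_\sigma$'s are pairwise disjoint and $U_{\sigma j}\subset\operatorname{int}(V_\sigma)$. Set $f_0\equiv y_\emptyset$ and inductively $f_n:=y_\sigma$ on $V_\sigma$, $f_n:=\gamma_{\tau\to\sigma}\circ\phi_\sigma$ on $U_\sigma\setminus V_\sigma$ (where $\phi_\sigma\colon U_\sigma\to[0,1]$ is a Urysohn function equal to $1$ on $V_\sigma$ and $0$ on $\partial U_\sigma$, and $\tau$ is the parent of $\sigma$), and $f_n:=f_{n-1}$ off $\bigcup_\sigma U_\sigma$. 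The pieces agree on their common boundaries by construction, so $f_n$ is continuous; the uniform bound $\sup_X d_Y(f_n,f_{n-1})\leq C\varepsilon_{n-1}$ makes $(f_n)$ Cauchy, whence $f:=\lim_n f_n$ is a continuous map $X\to Y$. For each infinite branch $\xi$ the unique point $p_\xi\in\bigcap_n C_{\xi|_n}$ satisfies $f(p_\xi)=\lim_n y_{\xi|_n}$, and the refinement property of the nets forces $f(P')=Y$, so $f$ is in particular surjective.

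The main obstacle is the simultaneous bookkeeping: choosing the branching $(a_n)$, diameters $(\delta_n)$, net meshes $(\varepsilon_n)$, and nested neighborhoods $V_\sigma\subset U_\sigma$ compatibly so that every inductive step can be carried out at once. Length-compactness of $Y$ is essential because it supplies the paths $\gamma_{\sigma\to\sigma j}$ of $d_Y$-diameter controlled by the $d_\ell$-distance of nearby net points, which is exactly what forces $(f_n)$ to be uniformly Cauchy; without this the inductive step would fail. Local compactness of $X$ is used to produce the compact cells $C_\sigma$ and their compact neighborhoods $V_\sigma$, while perfectness of $P$ ensures the recursion never gets stuck.
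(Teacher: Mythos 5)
Your proposal is correct and follows essentially the same route as the paper's own proof: a nested tree of disjoint compact cells in $P$ whose level-$n$ cover kills $\mathcal{H}^h(P')$, matched against successively refining finite nets in $(Y,d_\ell)$ joined by short paths, with bump/Urysohn functions interpolating along those paths to produce a uniformly Cauchy sequence $f_n$ whose limit maps $P'$ onto $Y$. The only differences are cosmetic (generic cells and Urysohn functions in place of the paper's balls with separated centers and Lipschitz bump functions), and your bookkeeping of $a_n$, $\varepsilon_n$, $\delta_n$ matches the paper's conditions (a)--(h).
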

\toggletrue{SF}

\begin{proof}
We suppose that $Y$ is equipped with the path metric. Without loss of generality, we can assume that $\diam Y\leq 1$.

As building blocks of the continuous surjection, we will use mappings from annuli\footnote{Maybe, it would be more accurate to speak about balls instead of annuli. In each step, we modify the mapping in balls. However, if we look what additional part stays fixed when we go from one step to the next, then we obtain a system of annuli.} to paths. Thus, the ingredients for the construction of the continuous surjection are as follows: a system of annuli/balls, a system of paths, and bump functions. As the space $X$ is locally compact, it suffices to construct the continuous surjection in a compact ball such that the surjection is constant in a neighborhood of the boundary. Consequently, in what follows, we assume that $X$ is a compact ball and that $P$ has positive distance to the boundary. We will also tacitly assume that every chosen ball in $X$ is contained in this compact ball.\footnote{Fixing a point $x\in P$ and a closed ball $\overline{B}(x,r)$ that is compact, we note that $B(x,r/2)\cap P$ is uncountable. By the Cantor-Bendixson \tref{CantorBendixson}, we can replace the original perfect set by one in $\overline{B}(x,3r/4)\cap P$.}
We start by looking at the bump functions. Given $x_0\in X$ and $0<\delta<\varepsilon$, we can construct continuous functions $\eta\colon X\to [0,1]$ satisfying

\begin{itemize}
\item[(i)] $\supp \eta$ is a compact subset of $B(x_0,\varepsilon)$,
\item[(ii)] $\eta(x)=1$ for all $x\in B(x_0,\delta)$,
\item[(iii)] $\eta$ is Lipschitz.
\end{itemize}

Let us turn our focus to the system of paths in $Y$.
Since $Y$ is length-compact, we may find for each non-negative integer $n$ a finite set $Y_n=\{y_i^n\}_{i=1}^{k_n}$ with the property that each $y\in Y$ can be connected to a point in $Y_n$ by a path of length no greater than $2^{-n}$. Then $\cup_n Y_n$ is dense in $Y$.

For each integer $n\geq 1$, we may partition $Y_n$ into sets $\mathcal{C}(y_i^{n-1})$ so that if $y_j^n\in \mathcal{C}(y_i^{n-1})$, then there is a $1$\nobreakdash-Lipschitz path $\gamma^n_j\colon [0,2^{-(n-1)}]\to Y$ satisfying $\gamma^n_j(0)=y_i^{n-1}$ and $\gamma^n_j(2^{-(n-1)})=y^n_j$.

To each path, we need to assign a corresponding annulus. Let us mix the construction of the annuli and the mapping $f$. Actually, we will work with globally defined mappings and modify them in balls.

Let $f_0\colon X\to Y$ be the constant mapping $f_0(x)=y_1^0$ for all $x\in X$.

Fix some $x_0\in P$. As $P$ is perfect and non-empty, it is infinite, and so we may find a collection $\mathcal{C}(x_1^0)$ of $k_1$ distinct points $\{x_i^1\}_{i=1}^{k_1}\subset P$. Choose $\varepsilon_1>0$ so small and balls $\{B(x_i^1,\varepsilon_1)\}_{i=1}^{k_1}$ such that their centers have distance at least $3\varepsilon_1$ from each other (hence the balls are pairwise disjoint), and $k_1 h(2\varepsilon_1)<1$.
We can fix a number $0<\delta_1<\varepsilon_1$ and for each of the points $x_i^1$ a corresponding bump function $\eta_i^1\colon X\to [0,1]$ as\footnote{In the construction of the bump function, we let $\delta=\delta_1$, $\varepsilon=\varepsilon_1$, and $x_0=x_i^1$.} above.

As the collection $\{B(x_i^1,\varepsilon_1)\}_{i=1}^{k_1}$ consists of pairwise disjoint balls, we may define the mapping $f_1\colon X\to Y$ by
\begin{equation*}
f_1(x)=
\begin{cases}
\gamma_i^1\circ \eta_i^1(x) & x\in B(x_i^1,\varepsilon_1),
\\
f_0(x) & x\not\in \cup_{i=1}^{k_1} B(x_i^1,\varepsilon_1).
\end{cases}
\end{equation*}
Note that $f_1(x_i^1)=\gamma_i^1(1)=y_i^1$. It is easily checked that $f_1$ is Lipschitz continuous.

Now, we continue inductively. For each $n\in \field{N}$, we may find a set of points \mbox{$\{x_i^{n+1}\}_{i=1}^{k_{n+1}}\subset P$},
pairwise disjoint balls $B(x_i^{n+1},\varepsilon_{n+1})$, and a continuous mapping\footnote{For $\eta_i^{n+1}$, we choose $\delta=\delta_{n+1}$ and $\varepsilon=\varepsilon_{n+1}$ for the radii, and we further set $x_0=x_i^{n+1}$ in the construction of the bump function detailed before.} $f_{n+1}\colon X\to Y$ defined by
\begin{equation*}
f_{n+1}(x)=
\begin{cases}
\gamma_i^{n+1}(\frac{1}{2^n}\eta_i^{n+1}(x)) & x \in B(x_i^{n+1},\varepsilon_{n+1}),\\
f_n(x) & x\not\in \cup_{i}^{k_{n+1}} B(x_i^{n+1},\varepsilon_{n+1}),
\end{cases}
\end{equation*}
such that
\begingroup
\renewcommand{\theenumi}{\alph{enumi}}
\def\p@enumii{\theenumi}
\def\labelenumi{(\theenumi)}
\begin{enumerate}
\item $k_{n+1} h(2\varepsilon_{n+1})<\frac{1}{n+1}$,
\item $x_j^n\not\in B(x_i^{n+1}, \varepsilon_{n+1})\subset B(x_j^{n},\delta_n)$ if  $x_i^{n+1}\in \mathcal{C}(x_j^{n})$. Further, each $B(x_j^{n},\delta_n)$ contains at least two balls\footnote{The requirement that we need at least two balls is a small nuisance. We will need it in order to find a perfect set $P'$ that is blown up. However, it is no problem as we can always add additional paths.} $B(x_i^{n+1},\varepsilon_{n+1})$, the centers of the balls have distance bounded from below by $3 \varepsilon_{n+1}$, and each ball contains infinitely many points of $P$,
\item
for each $y_{n+1}\in \mathcal{C}(y_n^i)$, there is a $x^i_{n+1}\in \mathcal{C}(x_n^i)$,
\item $f_{n+1}(x)=f_n(x)$ for all $x\not\in \cup_{i=1}^{k_{n+1}}B(x_i^{n+1},\varepsilon_{n+1})$,
\item $f_{n+1}(x)=y_1^0$ if $x\not\in \cup_{i=1}^{k_1}B(x_i^1,\varepsilon_1)$,
\item $f_m(x_i^{n+1})=y_i^{n+1}$ for all integers $m\geq n+1$ and $i=1,\ldots, k_n$,
\item \label{CSFunction}$d_Y(f_{n+1}(x),f_n(x))\leq 2^{-n}$ for all $x\in X$,
\item $f_{n+1}$ is continuous.
\end{enumerate}
\endgroup
Point \eqref{CSFunction} above shows that $(f_n)_n$ is a Cauchy sequence of mappings in the supremum norm.
The sequence $(f_n)_n$ converges uniformly to a continuous function $f\colon X\to Y$.

We consider
\begin{equation*}
P':=\bigcap_{n\in \field{N}} \bigcup_{i=1}^{k_n} \overline{B}(x_i^n,\varepsilon_n)
\end{equation*}
and argue in the following that it is a as required in the \nref{SpaceFilling}.

Let us show the compactness of $P'$. As intersection of closed sets it is closed as well and as closed subset of a compact set, it is itself compact.

To conclude that $P'$ is not empty, we consider a sequence $(x_n)_n$ of centers of balls such that $x_{n+1}$ is in $B(x_n,\varepsilon_n)$. It follows that $x_{n+m}\in \overline{B}(x_n,\varepsilon_n)$ for all $m\in \field{N}$. By the compactness of $\overline{B}(x_n,\varepsilon_n)$, a subsequence of $(x_m)_m$ converges in $\overline{B}(x_n,\varepsilon_n)$. However, since the original sequence is a Cauchy sequence, the original sequence converges. Since $n$ was arbitrary, the existence of the limit in the intersection follows.

We continue by verifying that $P'$ is perfect. We have already verified that it is closed. For showing the last required property in the definition of perfect, we assume by contradiction that there is an open set $O$ that hits $P'$ but only in finitely many points. Let $x\in O\cap P'$. Then there is some $r>0$ such that $B(x,r)\subset O$ and $B(x,r)$ hits $P'$ only in finitely many, say $N$, points. There is a ball $\overline{B}(x_n,\varepsilon_n)$ that contains $x$ and lies in $B(x,r)$. We find a generation $n+m$ that has more than $N+1$ disjoint balls in $B(x_n,\varepsilon_n)$. Continuing similarly as when we showed that $P'$ is not empty, we obtain the existence of $N+1$ distinct points in $P'\cap B(x,r)$---a contradiction.

Now, we verify that $\mathcal{H}^h(P')=0$. Let $\delta>0$. We can choose $n$ so large that $2\varepsilon_n<\delta$. Thus
\begin{equation*}
\mathcal{H}_\delta^h(P')\leq k_n h(2\varepsilon_n)<\frac{1}{n}
\end{equation*}
for all large $n$. We see that $\mathcal{H}^h_\delta(P')=0$ and since $\delta>0$ was arbitrary, the claim follows.

Let us conclude the proof by showing that $f(P')=Y$.
Let $y\in Y$ and $(y_i^n)_n$ converging to $y$ such that $y_i^{n+1}\in \mathcal{C}(y_i^n)$. We pick for each $y_i^n$ a corresponding $x_n^i\in P'$ with $f(x_n^i)=y_n^i$ and $d(x_{n+1}^j,x_n^i)<\delta_n$. Note that the sequence $(x_n^i)_n$ converges to some $x\in P'$ with $f(x)=y$.
\end{proof}

\begin{remark} In the above \nref{SpaceFilling}, we cannot require $\mathcal{H}^h(P')=0$ simultaneously for all Hausdorff functions with $h(0)=0$, since in this case $P'$ is countable, see Corollary~3 on p.~67 in \cite{Rogers}.
\end{remark}

\begin{remark}[Assumes (\CH)]
According to Theorem~7.5 in \cite{Keesling}, assuming the Continuum Hypothesis and that $n$ is a natural number, there exists a separable metric space $X_n$ such that $0<\mathcal{H}^n(X_n)<\infty$ but with the property that there is no continuous map $f\colon X_n\to [0,1]$ that is onto.
\end{remark}

\section{Some space-fillings satisfying Luzin's condition~(N)}\label{Space fillings and condition N}
Previously, we have assumed that $X$ can be written as union of a certain amount of sets of finite measure, whereas $Y$ cannot. Actually, we have also excluded the case where $Y$ has $\sigma$\nobreakdash-finite measure. We have done so with good reason:

\begin{example}\tlabel{ex tan}
Let us consider the differentiable bijection $\tan\colon
(-\frac{\pi}{2},\frac{\pi}{2})\surj \field{R}$. Let us assume that
$(-\frac{\pi}{2},\frac{\pi}{2})$ and $\field{R}$ are equipped with
the Euclidean distance and Lebesgue measure. Hence
$(-\frac{\pi}{2},\frac{\pi}{2})$ has finite measure and
$\field{R}$ is $\sigma$\nobreakdash-finite. Note that as locally
Lipschitz continuous function, the tangent $\tan$ satisfies Luzin's
condition~(N).
\end{example}

The following \nref{NullDim} enlightens that different dimensions do not always force sets to been blown up. Note that there are compact $0$\nobreakdash-dimensional spaces that are not countable.

\begin{example}\tlabel{NullDim} Assume that $(X,d_X, \mathcal{H}^0)$ is
$0$\nobreakdash-dimensional, and $(Y,d_Y,\nu)$ is a metric measure space. If
$f\colon X\surj Y$ is a surjection, then it satisfies Luzin's
condition~(N).
\end{example}

\begin{proof}
As $\mathcal{H}^0$ is the counting measure, the only subset of $X$
with measure zero is the empty set, which is mapped onto the empty
set as well.
\end{proof}

\begin{remark}
Why does above \nref{NullDim} not violate our results? Let $\nu=\mathcal{H}^h$ as specified in our results. As $f$ is a surjection, we have $\card{Y}\leq \card{X}$ violating the conditions of \tref{GeneralResult} and \tref{GeneralResultNonSeparable}, and if $X$ is countable also of \tref{Compact target}. In the remaining case, $Y$ is assumed to be compact and thus separable. By Section~2 in \cite{Freiwald}, $\card(Y)\in \{\aleph_0,2^{\aleph_0}\}$ --- both alternatives are not in the scope of \tref{Compact target}.
\end{remark}
\ifthenelse{\equal{\nref{ex tan}}{\nref{NullDim}}}{The two preceding
\nuref{ex tan}s~\pref{ex tan} and \pref{NullDim}}%
{\tref{ex tan} and \tref{NullDim}} are quite special. In \ifthenelse{\equal{\nref{ex tan}}{\nref{NullDim}}}{the first \nref{ex tan}}{\tref{ex tan}}, the domain and the target have the same dimension, while in \ifthenelse{\equal{\nref{ex tan}}{\nref{NullDim}}}{the \specialref{NullDim}, }{\tref{NullDim}, }%
the domain is zero-dimensional and has therefore no non-trivial sets of measure zero. Our goal is to construct space-fillings whose domain has dimension larger than zero, and whose dimension of the target does not necessarily agree with the one of the domain. In most constructions, we assume the Continuum Hypothesis for the existence of certain sets in the domain.

Before we can start with constructing such sets, we need to deal with cardinalities of families of certain subsets in metric spaces.

\begin{lemma}\tlabel{cardinalities} Let $(X,d)$ be a separable metric space with at least\footnote{The conclusion shows that in this case, we actually have exactly $2^{\aleph_0}$ elements.} $2^{\aleph_0}$ elements. Assume that $h$ is a Hausdorff function with $h(0)=0$. Let
\begin{align*}
\mathcal{O}&:=\{O\subset X:\; \text{$O$ open}\},\\
\mathcal{C}&:=\{C\subset X:\; \text{$C$ closed}\},\\
\mathcal{E}&:=\{E\subset X:\; \text{$E$ is a $G_\delta$\nobreakdash-set that is $\sigma$\nobreakdash-finite with respect to $\mathcal{H}^h$}\}.
\end{align*}
Then $\card \mathcal{O}=\card \mathcal{C} =\card \mathcal{E}=2^{\aleph_0}$.
\end{lemma}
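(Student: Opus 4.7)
The plan is to prove each of the three cardinalities equals $2^{\aleph_0}$ by pinching between a combinatorial upper bound (from the existence of a countable base) and a lower bound exhibited by singletons (together with their complements).

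First I would record that $|X|=2^{\aleph_0}$. The hypothesis already gives $|X|\geq 2^{\aleph_0}$. For the opposite inequality I would fix a countable dense set $D\subset X$ and define a map $D^{\field{N}}\to X\cup\{\star\}$ sending a sequence either to its limit in $X$ (if it converges) or to a dummy point $\star$; this map is onto $X$ because each $x\in X$ is a limit of a sequence in $D$, and its domain has cardinality $\aleph_0^{\aleph_0}=2^{\aleph_0}$.

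Next, the upper bounds. Since $X$ is separable metric, it has a countable base $\mathcal{B}$; every open set is a union of elements of $\mathcal{B}$, so $|\mathcal{O}|\leq 2^{|\mathcal{B}|}=2^{\aleph_0}$. The complementation map $O\mapsto X\setminus O$ is a bijection $\mathcal{O}\to\mathcal{C}$, so $|\mathcal{C}|=|\mathcal{O}|\leq 2^{\aleph_0}$. Every $G_\delta$\nobreakdash-set (in particular, every element of $\mathcal{E}$) is determined by a countable sequence of open sets, so $|\mathcal{E}|\leq|\mathcal{O}|^{\aleph_0}=(2^{\aleph_0})^{\aleph_0}=2^{\aleph_0\cdot\aleph_0}=2^{\aleph_0}$.

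For the lower bounds I would use the family of singletons. For each $x\in X$, the set $\{x\}$ is closed (any metric space is $T_1$) and, since
\begin{equation*}
\{x\}=\bigcap_{n\in\field{N}} B(x,1/n),
\end{equation*}
it is also a $G_\delta$\nobreakdash-set. Because $h(0)=0$ and $h(\Theta)\geq 0$, the premeasure from \eqref{definitionOfXi} satisfies $\xi(\{x\})=\min\{h(0),h(\Theta)\}=0$; using $\{x\}$ itself as a $\delta$\nobreakdash-cover gives $\Lambda^\xi_\delta(\{x\})=0$ for every $\delta>0$, so $\mathcal{H}^h(\{x\})=0$. Since singletons are Borel and Hausdorff measure is Borel by \tref{RogersRegular}, $\{x\}$ is measurable and trivially $\sigma$\nobreakdash-finite, so $\{x\}\in\mathcal{E}\cap\mathcal{C}$. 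As $|X|=2^{\aleph_0}$, the map $x\mapsto\{x\}$ is injective, yielding $|\mathcal{C}|\geq 2^{\aleph_0}$ and $|\mathcal{E}|\geq 2^{\aleph_0}$; taking complements gives $|\mathcal{O}|\geq 2^{\aleph_0}$. Combining these with the upper bounds closes all three estimates.

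I do not anticipate a serious obstacle: the only point that requires any thought is that singletons actually belong to $\mathcal{E}$, which is handled by the two observations that in a metric space closed sets are $G_\delta$, and that $h(0)=0$ forces $\mathcal{H}^h(\{x\})=0$.
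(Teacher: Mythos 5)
Your proposal is correct and follows essentially the same route as the paper: upper bounds for $\mathcal{O}$ and $\mathcal{C}$ from the countable base (the paper cites Kuratowski for this), the bound $\card\mathcal{E}\leq\card\mathcal{O}^{\aleph_0}=2^{\aleph_0}$ via defining sequences of open sets, and lower bounds from the singletons, which are closed $G_\delta$\nobreakdash-sets of $\mathcal{H}^h$\nobreakdash-measure zero. Your explicit verification that $\card X=2^{\aleph_0}$ is a harmless extra; only the hypothesis $\card X\geq 2^{\aleph_0}$ is needed for the lower bounds.
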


\begin{proof}
By Theorem~\ignoreSpellCheck{XIV}.3.1 in \cite{Kuratowski}, the cardinality of the family of all open sets is bounded from above by $2^{\aleph_0}$. The same upper bound applies to the closed sets. Since points are closed, both families $\mathcal{O}$ and $\mathcal{C}$ have cardinality $2^{\aleph_0}$. As points are $G_\delta$\nobreakdash-sets with zero measure, it suffices to prove that $\card \mathcal{E}\leq 2^{\aleph_0}$ to obtain that $\card \mathcal{E}=2^{\aleph_0}$. For each set $E\in \mathcal{E}$ we find a sequence $O_i$ of open sets whose intersection is $E$. Thus the cardinality of $\mathcal{E}$ is bounded from above by the cardinality of all sequences of real numbers. Cardinal arithmetics, see for example Sections~3 and 4 in \cite{Jech}, gives the wished upper bound.
\end{proof}

\begin{lemma}\tlabel{exactNumberOfPerfectSet}
Assume $(X,d,\mathcal{H}^h)$ is compact and satisfies \Hinf.
Then $X$ contains exactly $2^{\aleph_0}$ perfect sets of non-$\sigma$\nobreakdash-finite measure. Moreover, we can find $2^{\aleph_0}$ perfect sets of non\nobreakdash-$\sigma$\nobreakdash-finite measure that are pairwise disjoint.
\end{lemma}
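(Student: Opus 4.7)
The plan is to establish the upper bound $2^{\aleph_0}$ on the number of perfect sets of non-$\sigma$-finite measure via \tref{cardinalities}, and to match it with $2^{\aleph_0}$ pairwise disjoint such sets by combining \tref{perfect sets in the compact case} with the Cantor-Bendixson \tref{CantorBendixson}. The key observation that glues these ingredients together is that $h(0)=0$ (built into property \Hinf) forces countable subsets of $X$ to have $\mathcal{H}^h$-measure zero, so the Cantor-Bendixson decomposition can discard its countable part without destroying non-$\sigma$-finiteness.

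For the upper bound, I would first verify that \tref{cardinalities} is applicable to $X$. The space $X$ is compact metric, hence separable. Moreover, $X$ has at least $2^{\aleph_0}$ elements: since $h(0)=0$ and $\mathcal{H}^h$ is countably subadditive, countable subsets of $X$ are $\mathcal{H}^h$-null, so the non-$\sigma$-finiteness of $X$ forces $X$ to be uncountable; a Cantor-Bendixson argument inside the compact metric space $X$ then produces a non-empty perfect subset, which has cardinality $2^{\aleph_0}$. Consequently, \tref{cardinalities} yields $\card(\mathcal{C})=2^{\aleph_0}$ for the family $\mathcal{C}$ of closed subsets of $X$, and since every perfect set is closed, the collection of perfect subsets of $X$, and \emph{a fortiori} those of non-$\sigma$-finite measure, has cardinality at most $2^{\aleph_0}$.

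For the lower bound together with pairwise disjointness, I plan to invoke \tref{perfect sets in the compact case} to obtain a family $\{K_\alpha\}$ of $2^{\aleph_0}$ pairwise disjoint compact subsets of $X$, each of non-$\sigma$-finite $\mathcal{H}^h$-measure. Applying the Cantor-Bendixson \tref{CantorBendixson} inside each $K_\alpha$ (which inherits from $X$ the property of being $T_1$ with a countable base) yields a decomposition $K_\alpha=P_\alpha\cup Q_\alpha$ with $P_\alpha$ perfect in $K_\alpha$ and $Q_\alpha$ countable; since $K_\alpha$ is closed in $X$, the set $P_\alpha$ is perfect in $X$ as well. Because $h(0)=0$ implies $\mathcal{H}^h(Q_\alpha)=0$, the non-$\sigma$-finiteness of $\mathcal{H}^h\trestriction K_\alpha$ transfers to $P_\alpha$, and in particular $P_\alpha\neq\emptyset$. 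Disjointness of the $K_\alpha$ immediately gives disjointness of the $P_\alpha$, producing the required $2^{\aleph_0}$ pairwise disjoint perfect sets of non-$\sigma$-finite measure.

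I do not anticipate a serious obstacle: the only real nuance is the bookkeeping between $\sigma$-finite and non-$\sigma$-finite measure across the Cantor-Bendixson decomposition, which is handled cleanly by the hypothesis $h(0)=0$ that is part of property \Hinf.
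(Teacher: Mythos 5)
Your proposal is correct and follows essentially the same route as the paper: the upper bound via \tref{cardinalities} applied to the closed sets, and the lower bound with disjointness by applying Cantor-Bendixson to the disjoint compact sets supplied by \tref{perfect sets in the compact case}. You are merely more explicit than the paper about two points it leaves implicit, namely that $X$ has at least $2^{\aleph_0}$ elements (so \tref{cardinalities} applies) and that discarding the countable Cantor-Bendixson remainder preserves non-$\sigma$-finiteness because $h(0)=0$ makes countable sets null.
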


\begin{proof}
Let us first determine the cardinality of the family of the closed sets that do not have $\sigma$\nobreakdash-finite $\mathcal{H}^h$\nobreakdash-measure. Note that $X$ is separable, thus the desired upper bound for the cardinality of the collection of the closed sets of \tref{cardinalities} applies.
From \tref{perfect sets in the compact case}, we know of the existence of the required amount of pairwise disjoint compact sets with the right size.

Let us now tackle the cardinality of the perfect sets. Since perfect sets are closed, the same upper bound applies. Separable metric spaces have a countable basis, and thus Cantor-Bendixson's \tref{CantorBendixson} gives the desired amount of perfect sets.
\end{proof}

The following construction is inspired by an example by A.~S.~Besicovitch, see Chapter~II in \cite{BesicovitchRarified}; parts of Besicovitch's example are also used in \tref{BesicovitchsSet}:
\begin{proposition}[Assumes (\CH)] \tlabel{BesicovitchMartin} Assume that $(X,d,\mathcal{H}^h)$ is compact, satisfies \Hinf, and that (\CH) holds. Then there exists a set $G\subset X$ such that
\begingroup
\renewcommand{\theenumi}{\alph{enumi}}
\def\p@enumii{\theenumi}
\def\labelenumi{(\theenumi)}
\begin{enumerate}
\item \label{Beicovitch Cardinality} the cardinality of $G$ is $2^{\aleph_0}$,
\item \label{Besicovitch Intersection} if $E\subset G$ is $\sigma$\nobreakdash-finite with respect to $\mathcal{H}^h$, then $G\cap E$ is countable,
\item $G$ is non-$\sigma$\nobreakdash-finite with respect to $\mathcal{H}^h$,
\item each subset $A\subset G$ is $\mathcal{H}^h$\nobreakdash-measurable.
\end{enumerate}
\endgroup
\end{proposition}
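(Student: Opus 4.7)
The plan is to carry out a Besicovitch-style transfinite recursion, using the Continuum Hypothesis to enumerate all ``thin'' sets in $\omega_1$ many stages and to pick the required points from a reservoir of perfect sets of large measure. Concretely, by \tref{cardinalities} the family $\mathcal{E}$ of $G_\delta$\nobreakdash-subsets of $X$ that are $\sigma$\nobreakdash-finite with respect to $\mathcal{H}^h$ has cardinality $2^{\aleph_0}$; invoking (\CH), fix an enumeration $\mathcal{E}=\{E_\alpha:\alpha<\omega_1\}$. By \tref{exactNumberOfPerfectSet}, fix also a family $\{F_\alpha:\alpha<\omega_1\}$ of pairwise disjoint perfect subsets of $X$, each of non\nobreakdash-$\sigma$\nobreakdash-finite $\mathcal{H}^h$\nobreakdash-measure.

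By transfinite recursion on $\alpha<\omega_1$, choose a point $x_\alpha\in F_\alpha\setminus\bigcup_{\beta\leq\alpha}E_\beta$. This choice is legitimate since $\{\beta:\beta\leq\alpha\}$ is countable, so $\bigcup_{\beta\leq\alpha}E_\beta$ is a countable union of $\sigma$\nobreakdash-finite sets and hence itself $\sigma$\nobreakdash-finite, while $F_\alpha$ is not $\sigma$\nobreakdash-finite; the difference therefore has non\nobreakdash-$\sigma$\nobreakdash-finite measure and in particular is non-empty. Set $G:=\{x_\alpha:\alpha<\omega_1\}$. Since the $F_\alpha$ are pairwise disjoint, the $x_\alpha$ are pairwise distinct, so $|G|=\aleph_1=2^{\aleph_0}$, which gives (a).

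For (b), suppose $E\subset G$ is $\sigma$\nobreakdash-finite and write $E=\bigcup_n F_n$ with each $F_n$ $\mathcal{H}^h$\nobreakdash-measurable of finite measure. By the $\mathcal{G}_\delta$\nobreakdash-regularity of $\mathcal{H}^h$ (see \tref{RogersRegular}), each $F_n$ is contained in a $G_\delta$\nobreakdash-set $R_n$ with $\mathcal{H}^h(R_n)=\mathcal{H}^h(F_n)<\infty$, so $R_n\in\mathcal{E}$, say $R_n=E_{\gamma_n}$. By construction $x_\alpha\notin E_{\gamma_n}$ whenever $\alpha>\gamma_n$, so $F_n\cap G\subset\{x_\alpha:\alpha\leq\gamma_n\}$ is countable, and therefore $G\cap E=\bigcup_n(F_n\cap G)$ is countable. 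Assertion (c) is then automatic: if $G$ were $\sigma$\nobreakdash-finite with respect to $\mathcal{H}^h$, (b) applied with $E=G$ would force $G$ to be countable, contradicting (a).

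The same envelope argument yields the stronger fact that $F\cap G$ is countable (hence of $\mathcal{H}^h$\nobreakdash-measure zero, since $h(0)=0$) for every $F\subset X$ with $\mathcal{H}^h(F)<\infty$, and this settles (d): given $A\subset G$ and any test set $F$, the Carath\'eodory inequality $\mathcal{H}^h(F)\geq\mathcal{H}^h(F\cap A)+\mathcal{H}^h(F\setminus A)$ is trivial when $\mathcal{H}^h(F)=\infty$, and otherwise reduces to monotonicity after observing that $\mathcal{H}^h(F\cap A)\leq\mathcal{H}^h(F\cap G)=0$. The main obstacle in executing this plan is setting up the enumeration correctly — one must enumerate the $G_\delta$\nobreakdash-envelopes (of which there are exactly $2^{\aleph_0}$ by \tref{cardinalities}) rather than all $\sigma$\nobreakdash-finite sets, and one must verify at each countable stage that the $\sigma$\nobreakdash-finite forbidden set $\bigcup_{\beta\leq\alpha}E_\beta$ cannot exhaust the non\nobreakdash-$\sigma$\nobreakdash-finite reservoir $F_\alpha$.
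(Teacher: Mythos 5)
Your proof is correct and follows essentially the same Besicovitch-style transfinite recursion as the paper: enumerate the $\sigma$\nobreakdash-finite $\mathcal{G}_\delta$\nobreakdash-sets via (\CH), diagonalize by picking points from non\nobreakdash-$\sigma$\nobreakdash-finite perfect sets, and use the $\mathcal{G}_\delta$\nobreakdash-envelope argument for (b) and (d). The only (harmless) deviation is that you draw the points from a pairwise disjoint family of perfect sets rather than from an enumeration of \emph{all} perfect non\nobreakdash-$\sigma$\nobreakdash-finite sets as the paper does, which makes distinctness automatic at the cost of the (unneeded for this statement) extra property that $G$ meets every such perfect set.
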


\begin{proof}
We set
\begin{align*}
\mathcal{P}&:=\{P\subset X:\; \text{$P$ is perfect but not $\sigma$\nobreakdash-finite with respect to $\mathcal{H}^h$}\},\\
\mathcal{E}&:=\{E\subset X:\; \text{$E=\cap_{n\in \field{N}}O_n$, $O_n$ open, and $E$ is $\sigma$\nobreakdash-finite with respect to $\mathcal{H}^h$ }\}.
\end{align*}
Using \ifthenelse{\equal{\nref{cardinalities}}{\nref{exactNumberOfPerfectSet}}}{%
\nuref{cardinalities}s~\pref{cardinalities} and \pref{exactNumberOfPerfectSet}}%
{\tref{cardinalities} and \tref{exactNumberOfPerfectSet}}, we obtain that
\begin{equation*}
\card \mathcal{P}=\card \mathcal{E}=2^{\aleph_0}.
\end{equation*}
Let us denote the ordinal corresponding to the real numbers by $\ordReals$. There exist bijections between $\ordReals$ and $\mathcal{P}$, and between $\ordReals$ and $\mathcal{E}$, respectively. Given $i\in \ordReals$, we denote by $P_i$ and $E_i$ the corresponding element in $\mathcal{P}$ and $\mathcal{E}$, respectively. Let us choose an arbitrary point $x_1$ out of the set $P_1\setminus E_1$. Since $P_1$ is not $\sigma$\nobreakdash-finite, but $E_1$ is, such a point certainly exists.

Take $i_0\in \ordReals$ and assume that for each $i<i_0$, we have already chosen a point $x_i\in P_i\setminus \cup_{k=1}^i E_k$, different from all the previously chosen points $x_k$ for $k<i$. We choose now a point $x_{i_0}\in P_{i_0}\setminus \cup_{k=1}^{i_0} E_k$ different from all the already chosen points $x_k$. Arguing similarly as in the choice of $x_1$, such a point $x_{i_0}$ certainly exists. We collect the points in the following set
\begin{equation*}
G:=\{x\in X:\, \text{there exists $i\in \ordReals$ such that $x=x_i$}\}.
\end{equation*}
Before proving that $G$ has the required properties, let us note that if $P\in \mathcal{P}$, then there is an index $i\in \ordReals$ with $P=P_i$. Thus there exists a point $x_i\in G\cap P$ implying that $G\cap P$ is not empty.
\begin{itemize}
\item[(a)] By construction.
\item[(b)] Assume that $E$ is $\sigma$\nobreakdash-finite with respect to $\mathcal{H}^h$, and we suppose first that it is a countable intersection of open sets. In this case, there is an index $i_0\in \ordReals$ such that $E=E_{i_0}$. For all $i>i_0$, we have
    \begin{equation*}
    x_i\in P_i\setminus \cup_{k=1}^i E_k\subset P_i\setminus E_{i_0}.
    \end{equation*}
    Hence at most the points $x_1,x_2,\ldots,x_{i_0}$ lie in $E_{i_0}$ and $G$. It follows that $G\cap E=G\cap E_{i_0}$ is countable.

    If the $\sigma$\nobreakdash-finite set $E$ is not a countable intersection of open sets, then we write $E=\cup_{n\in \field{N}}E_n$, where the sets $E_n$ are measurable sets with finite measure. By \tref{RogersRegular}, the Hausdorff measure is $\mathcal{G}_\delta$\nobreakdash-regular. Thus the sets $E_n$ are contained in sets $F_n$ that can be written as countable intersection of open sets. Hence, by our considerations above, each $F_n\cap G$ is countable,
    and consequently this is true for each $E_n\cap G$ and finally for $E\cap G$.
\item[(c)] If $G$ would be $\sigma$\nobreakdash-finite, then by \eqref{Besicovitch Intersection}, it would
    be countable.
    This would contradict \eqref{Beicovitch Cardinality}.
\item[(d)] Suppose $A\subset G$. Let $F\subset X$ be an arbitrary subset. We have to show that
    \begin{equation*}
    \mathcal{H}^h(F)\geq \mathcal{H}^h(F\cap A)+\mathcal{H}^h(F\setminus A)
    \end{equation*}
    holds. If $\mathcal{H}^h(F)=\infty$ or $\mathcal{H}^h(F)=0$, the inequality follows. Thus, we may assume that $0<\mathcal{H}^h(F)<\infty$. It follows that $\mathcal{H}^h(F\cap A)<\infty$. But this implies by \eqref{Besicovitch Intersection} that $F\cap A\subset G$
    is countable,
    and our assumptions on $h$ imply that $\mathcal{H}^h(F\cap A)=0$. But then the inequality holds as well.
\end{itemize}
\end{proof}

\begin{theorem}[Assumes (\CH)]\tlabel{BesiSF} Suppose $(X,d,\mathcal{H}^h)$ is compact and satisfies \Hinf. We further stipulate that (\CH) holds.

Assume that $(Y,\nu)$ is a measure space such that $\card Y=2^{\aleph_0}$, and $\nu$ is such that points have measure zero.

Then there exists a  measurable surjection $f\colon X\surj Y$ such that whenever $N\subset X$ is $\sigma$\nobreakdash-finite with respect to $\mathcal{H}^h$, then $\nu(f(N))=0$.
\end{theorem}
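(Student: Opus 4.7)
The plan is to read off the required surjection directly from the Besicovitch-type set $G\subset X$ produced by \tref{BesicovitchMartin}. Since $\card G = 2^{\aleph_0} = \card Y$, I would fix any bijection $g\colon G\to Y$ and any point $y_0\in Y$ and define
\begin{equation*}
f(x)=\begin{cases} g(x) & \text{if } x\in G,\\ y_0 & \text{if } x\in X\setminus G.\end{cases}
\end{equation*}
Surjectivity of $f$ is then immediate from the bijectivity of $g$.

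Next I would verify measurability: for any $B\subset Y$, if $y_0\notin B$ then $f^{-1}(B)=g^{-1}(B)\subset G$, which is $\mathcal{H}^h$\nobreakdash-measurable by property (d) of \tref{BesicovitchMartin}; if $y_0\in B$ then $f^{-1}(B)=g^{-1}(B)\cup(X\setminus G)$, which is measurable since $G$ itself is a subset of $G$ and hence measurable, so that $X\setminus G$ is too. In particular $f$ is $\nu$\nobreakdash-measurable no matter what $\sigma$\nobreakdash-algebra one takes on $Y$.

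For the Luzin\nobreakdash-style property, let $N\subset X$ be $\sigma$\nobreakdash-finite with respect to $\mathcal{H}^h$. Property~(b) of \tref{BesicovitchMartin} (whose proof actually establishes the conclusion for arbitrary $\sigma$\nobreakdash-finite $E\subset X$ after enlarging each piece of finite measure to a $G_\delta$\nobreakdash-hull via \tref{RogersRegular}) gives that $N\cap G$ is countable. Consequently
\begin{equation*}
f(N)\subset g(N\cap G)\cup\{y_0\}
\end{equation*}
is a countable subset of $Y$, and since $\nu$ vanishes on singletons, countable subadditivity yields $\nu(f(N))=0$.

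All of the substantive work — exhibiting a set that is simultaneously of maximal cardinality, totally measurable, and meets every $\sigma$\nobreakdash-finite set in a countable set — has already been carried out in \tref{BesicovitchMartin} via transfinite recursion under (\CH). The present proof is therefore essentially an unpacking of that construction, and the only point requiring attention is the small measurability bookkeeping above; I do not foresee a genuine obstacle.
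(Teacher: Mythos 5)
Your proof is correct and takes essentially the same route as the paper: both read the surjection off the set $G$ from \tref{BesicovitchMartin}, send $X\setminus G$ to a fixed point $y_0$, verify measurability from the fact that every subset of $G$ is $\mathcal{H}^h$\nobreakdash-measurable, and conclude via the countability of $G\cap N$ for $\sigma$\nobreakdash-finite $N$ (including your correct observation that property~(b) extends to arbitrary $\sigma$\nobreakdash-finite subsets of $X$ by taking $\mathcal{G}_\delta$\nobreakdash-hulls). The only cosmetic difference is that the paper first splits $G$ into $2^{\aleph_0}$ pairwise disjoint uncountable sets via \tref{uncountablePartition} and makes $f$ constant on each piece, whereas you use a bijection $G\to Y$ directly; both definitions work for exactly the same reasons.
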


\begin{proof}
Choose $G\subset X$ as in \tref{BesicovitchMartin}. According to \tref{uncountablePartition}, we can write $G$ as union of $2^{\aleph_0}$ many, uncountable sets $A_\lambda$, $\lambda\in \field{R}$, that are pairwise disjoint. We denote by $F\colon \field{R}\surj Y$ a bijection and fix a point $y_0\in Y$. We define $f\colon X\to Y$ by $f(x)=F(\lambda)$ if $x\in A_\lambda$, and we set\footnote{By Howroyd's \tref{ExistenceOfSubsets}, we see that $G\not=X$.} $f(x)=y_0$ if $x\in X\setminus G$. By construction, $f$ is a surjection.

Remember that each subset of $G$ is measurable. The preimage of any subset of $Y$ is either a subset of $G$ or the union of $X\setminus G$ and a subset of $G$. In both cases, the preimage is measurable. This shows that $f$ is measurable.

A set $N$ of measure zero splits in a part lying in $G$ and one lying in the complement of $G$. The part in the complement is mapped to the point $y_0$; its measure is zero. The part inside $G$ has countably many points and is thus mapped to a set with the same cardinality upper bound. This gives the claim.
\end{proof}

We may ask if the dichotomy between Luzin's condition~(N) and space-fillings is still valid if we
do not require the mapping to be measurable. To answer this question in \tref{filling and N}, we first introduce a peculiar set constructed by Besicovitch, which has been the main source of inspiration for the construction of the set in \tref{BesicovitchMartin}.  Here again, we assume that the Continuum Hypothesis (\CH) holds.

\begin{example}[Assumes (\CH)]\tlabel{BesicovitchsSet}
Besicovitch
constructs in \cite[Chapter~II]{BesicovitchRarified} under the continuum
hypothesis a set $H\subset [0,1]^2$, which has, amongst others,
the following two properties
\begin{enumerate}
\item[(a)] the exterior plane measure of $H$ is $1$,
\item[(b)] any subset of $H$ of plane measure zero is a countable set.
\end{enumerate}
For example from Theorem~264I in \cite{Fremlin}, we know that the
Hausdorff measure $\mathcal{H}^2$ and the Lebesgue measure
$\mathcal{L}^2$ have the same measurable sets, the same null
sets, and agree up to a factor on the $\sigma$\nobreakdash-algebra of measurable sets. We claim that $\mathcal{H}^2(H)>0$. Since $\mathcal{H}^2$ is
a $\mathcal{G}_\delta$\nobreakdash-regular Borel measure by \tref{RogersRegular}, there exists a positive constant $C$ and a Borel set $B$ with $H\subset B$
and
\begin{equation*}
\mathcal{H}^2(H)=\mathcal{H}^2(B)=C\leb^2(B)\geq C\leb^2(H)=C>0.
\end{equation*}
\end{example}

In the following
\nref{filling and N}, we construct (under the Continuum Hypothesis) a non-measurable space-filling that satisfies Luzin's condition~(N). The main difference to the mapping constructed in \tref{BesiSF} is that here, the domain has finite measure.
\begin{example}[Assumes (\CH)\tlabel{filling and N}]
If we have a surjection $f\colon H\surj [0,1]^3$ (the existence follows since
by the continuum hypothesis, $H$ and $[0,1]^3$ have the same
cardinality), then it satisfies Luzin's condition~(N) with respect to $\mathcal{H}^2$ on domain and target, since the only
sets of measure zero are countable and are thus mapped on
countable sets in $[0,1]^3$ as well.

We can extend the mapping $f$ to a surjection $F\colon [0,1]^2\to
[0,1]^3$ by letting $F(x)=0$ whenever $x\not\in H$ and $F(x)=f(x)$
otherwise. \tref{MainInfinite} tells us
that $F$ cannot be measurable.
\end{example}

Measurability of a mapping depends to a certain degree on the topology of the target. The following \nref{trivialTopology} contains a version of \tref{filling and N} where the topology of the target is trivial. It also shows that in \tref{StandaAbstract}, we cannot get rid of the assumption of the existence of the desired disjoint Borel sets.
\begin{example}[Assumes (\CH)]\tlabel{trivialTopology}
Let $(Y,\mathfrak{T})$ be a topological space, where the topology $\mathfrak{T}$ is
given by $\mathfrak{T}=\{Y,\emptyset\}$. Suppose $\nu$ is a non-$\sigma$\nobreakdash-finite
Borel measure without atoms on $Y$ and $\card(Y)=\card(\field{R})$.

Let $H$ be the set defined in \tref{BesicovitchsSet}. It is uncountable
and by the Continuum Hypothesis, its cardinality is the same as that
of $[0,1]^2$. We find a bijection $\tilde{f}\colon H\surj Y$. Fix a point $y_0\in Y$
and let $f\colon [0,1]^2\surj Y$ be defined by $f(x)=\tilde{f}(x)$ if $x\in H$ and
$f(x)=y_0$ otherwise.

We assume that $[0,1]^2$ is equipped with the standard metric and measure. Let
$N\subset [0,1]^2$ be a set of measure zero. Then
\begin{equation*}
N_H:=N\cap H
\end{equation*}
is of measure zero as well, and hence it is countable. Thus $f(N_H)$ is countable. The
set $N_{[0,1]^2\setminus H}:=N\cap ([0,1]^2\setminus H)$ is mapped to $y_0$. So $f(N_{[0,1]^2\setminus H})$ and finally $f(N)$ are countable. Since $\nu$ has no atoms, $f(N)$ has measure zero. Thus
$f$ is a measurable space-filling satisfying Luzin's condition~(N).
\end{example}

\section{Applications}\label{Applications}
For example by Proposition~423B in \cite{Fremlin}, Polish spaces are analytic. Choosing for $f$ the identity in \tref{GeneralResult} and by way of contradiction, we obtain the following result:
\begin{corollary}[Corollary of \tref{GeneralResult}]
Let $(X,d,\mu)$ be a complete, separable metric measure space, where $\mu$ is Borel. Suppose $X$ is $\sigma$\nobreakdash-finite with respect to $\mu$. We assume further that $h$ is a continuous Hausdorff function of finite order with $h(0)=0$, and that $\mathcal{H}^h$ is absolutely continuous with respect to $\mu$. Then $X$ is $\sigma$\nobreakdash-finite with respect to $\mathcal{H}^h$.
\end{corollary}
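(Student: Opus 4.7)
The plan is to argue by contradiction exactly as the hint in the statement suggests. Suppose $X$ is not $\sigma$\nobreakdash-finite with respect to $\mathcal{H}^h$. The goal is to verify that under this supposition the hypotheses of \tref{GeneralResult} are met when we take the target to be $X$ itself and $f$ to be the identity, because this will yield a null set whose image under the identity has positive $\mathcal{H}^h$\nobreakdash-measure, contradicting absolute continuity.

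First I would unpack (H$\infty$) for $(X,d,\mathcal{H}^h)$. Completeness and separability of $(X,d)$ make $X$ a Polish space, and every Polish space is analytic as an analytic subspace of itself; the Hausdorff function $h$ is continuous, of finite order, and satisfies $h(0)=0$ by assumption, so clause~(a) of property~(H) holds. Combined with the supposition that $X$ is not $\sigma$\nobreakdash-finite with respect to $\mathcal{H}^h$, this gives \Hinf. On the domain side, $(X,\mu)$ is $\sigma$\nobreakdash-finite and $\mu$ is Borel by hypothesis, so the domain conditions of \tref{GeneralResult} also hold.

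Next I would check that the identity $\mathrm{id}\colon X\surj X$ is a measurable surjection in the sense of \tref{measureDef}: it is obviously surjective, and for every open $O\subset X$ one has $\mathrm{id}^{-1}(O)=O$, which is a Borel set and hence $\mu$\nobreakdash-measurable since $\mu$ is Borel. Applying \tref{GeneralResult} then produces a set $N\subset X$ with $\mu(N)=0$ and $\mathcal{H}^h(\mathrm{id}(N))=\mathcal{H}^h(N)>0$. But the absolute continuity of $\mathcal{H}^h$ with respect to $\mu$ forces $\mathcal{H}^h(N)=0$, a contradiction. Hence $X$ must be $\sigma$\nobreakdash-finite with respect to $\mathcal{H}^h$.

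There is essentially no obstacle beyond bookkeeping: the only subtlety is confirming that $X$ itself qualifies as an \emph{analytic subspace of a complete, separable metric space}, which is immediate since $X$ is Polish and therefore analytic in itself by the remark preceding \tref{Souslin}. Everything else is a direct invocation of \tref{GeneralResult} with $Y=X$ and $f=\mathrm{id}$.
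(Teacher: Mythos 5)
Your argument is correct and is precisely the paper's intended proof: the author's own justification is the one-line remark before the corollary that one should take $f$ to be the identity in \tref{GeneralResult} and argue by contradiction, which you have simply carried out in detail (including the verification of \Hinf\ via the Polish, hence analytic, structure of $X$). Nothing further is needed.
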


From our results, we can obtain rigidity results:
\begin{theorem}\tlabel{Rigidity}
Let $(X,\mu)$ be an analytic Hausdorff space such that $\mu$ is Borel with \mbox{$\mu(X)<\infty$}. Suppose $Y$ is a complete and separable metric space and $h$ a continuous Hausdorff function with $h(0)=0$ and one of the following is satisfied:
\begin{itemize}
\item[(a)] $h$ is of finite order
\item[(b)] $Y$ has finite structural dimension
\item[(c)] $Y$ is ultrametric.
\end{itemize}
Suppose that $f\colon X\to Y$ is Borel measurable and satisfies Luzin's condition~(N) in the sense that $\mu(N)=0$ implies that $\mathcal{H}^h(f(N))=0$. Then $f(X)$ is $\sigma$\nobreakdash-finite with respect to $h$.
\end{theorem}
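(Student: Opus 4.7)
The plan is to argue by contradiction: assuming that $f(X)$ is not $\sigma$\nobreakdash-finite with respect to $\mathcal{H}^h$, I will produce a set $N\subset X$ with $\mu(N)=0$ but $\mathcal{H}^h(f(N))>0$, directly contradicting the Luzin~(N) hypothesis on $f$. The strategy is to reduce the statement to an application of \tref{GeneralResult} applied to the corestriction $f\colon X\surj f(X)$.

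First I would verify that $(f(X),d,\mathcal{H}^h)$ satisfies property~\Hinf. The key step is to show that $f(X)$ is an analytic subspace of the complete separable metric space $Y$. Since $X$ is analytic Hausdorff, $Y$ is Polish (hence second countable), and $f$ is Borel measurable, the graph of $f$ is a Borel subset of the analytic Hausdorff space $X\times Y$, and $f(X)$ is the projection of that graph onto the second coordinate; as the projection of a Borel subset of an analytic Hausdorff space, it is itself analytic (cf.\ \tref{Souslin} and the surrounding material from \cite{Fremlin}). Combining this with $Y$ being complete and separable, with whichever of conditions (a)--(c) is in force, and with the standing assumptions on $h$, we conclude that $(f(X),d,\mathcal{H}^h)$ enjoys property~(H). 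The contradiction hypothesis that $f(X)$ fails to be $\sigma$\nobreakdash-finite with respect to $\mathcal{H}^h$ then upgrades (H) to \Hinf.

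The hypotheses of \tref{GeneralResult} on the domain are immediate: $\mu$ is a Borel measure and $\mu(X)<\infty$, so $X$ is trivially $\sigma$\nobreakdash-finite, and Borel measurability of $f$ forces $\mu$\nobreakdash-measurability since Borel sets are $\mu$\nobreakdash-measurable. Applying \tref{GeneralResult} to the measurable surjection $f\colon X\surj f(X)$ delivers a set $N\subset X$ with $\mu(N)=0$ and $\mathcal{H}^h(f(N))>0$, flatly contradicting Luzin's condition~(N) for $f$. This contradiction completes the proof.

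The only genuinely delicate point is the verification that $f(X)$ is analytic when $f$ is only assumed Borel measurable rather than continuous; once this descriptive-set-theoretic fact is invoked, every other step is a bookkeeping match against the hypotheses of \tref{GeneralResult}.
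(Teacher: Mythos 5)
Your proposal is correct and follows essentially the same route as the paper: establish that $f(X)$ is analytic, note that $f$ surjects onto its image, and invoke \tref{GeneralResult} to contradict Luzin's condition~(N) if $f(X)$ were not $\sigma$\nobreakdash-finite. The only cosmetic difference is that you justify the analyticity of $f(X)$ via the graph-projection argument, whereas the paper simply cites Lemma~423G of \cite{Fremlin} for the same fact.
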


\begin{proof}
Under the given conditions, $Y$ is analytic, see for example Proposition~423B in \cite{Fremlin} and by Lemma~423G in the same source, we know that $f(X)$ is analytic. Now, $f$ is surjective onto its image. If it would not be $\sigma$\nobreakdash-finite with respect to $\mathcal{H}^h$, then
\tref{GeneralResult} would imply that $f$ violates Luzin's condition~(N). Thus the claim follows.
\end{proof}

We cite Theorem~1.3 in \cite{CHM}:
\begin{theorem}\tlabel{CHMHomeo}
Let $f\in W^{1,n-1}_\textnormal{loc}((-1,1)^n,\field{R}^n)$ be a homeomorphism\footnote{onto $f((-1,1)^n)$}.
Then for almost every $y\in (-1,1)$ the mapping $f\trestriction_{(-1,1)^{n-1}\times \{y\}}$ satisfies
the (\mbox{$(n-1)$}\nobreakdash-di\-men\-sion\-al) Luzin condition~(N), i.e., for every $A\subset (-1,1)^{n-1}\times
\{y\}$, \mbox{$\mathcal{H}^{n-1}(A)=0$} implies $\mathcal{H}^{n-1}(f(A))=0$.
\end{theorem}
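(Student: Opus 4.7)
\emph{Proof plan.} The approach is to reduce the slice statement to a two-step program: first transfer the Sobolev and topological regularity of $f$ onto the horizontal slices $f_y := f\trestriction_{(-1,1)^{n-1}\times\{y\}}$ for almost every $y$, and then invoke a borderline Luzin-(N) theorem for injective $W^{1,m}$-maps on an $m$-dimensional domain (with $m=n-1$ and target $\field{R}^n$).

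First, I would perform the Sobolev slicing. Since $f\in W^{1,n-1}_\textnormal{loc}((-1,1)^n,\field{R}^n)$, choose an ACL representative; then the weak partial derivatives $\partial_j f$ lie in $L^{n-1}_\textnormal{loc}$ for $j=1,\dots,n$. Applying Fubini to $|\partial_j f|^{n-1}$ for $j=1,\dots,n-1$ shows that, for $\mathcal{L}^1$-a.e.\ $y$, the restrictions $\partial_j f(\cdot,y)$ exist, are $L^{n-1}_\textnormal{loc}$ on $(-1,1)^{n-1}$, and coincide with the distributional derivatives of $f_y$. Hence $f_y\in W^{1,n-1}_\textnormal{loc}((-1,1)^{n-1},\field{R}^n)$ for a.e.\ $y$. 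Since $f$ itself is a homeomorphism onto its image, every slice $f_y$ is automatically a continuous injection, so this good set of slices consists of topological embeddings belonging to the critical Sobolev class.

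The heart of the argument is the following borderline statement: a continuous injective map $g\in W^{1,m}_\textnormal{loc}(\Omega,\field{R}^n)$ on an $m$-dimensional open set $\Omega$ satisfies Luzin's condition (N) with respect to $\mathcal{H}^m$. I would prove this via the area formula
\begin{equation*}
\mathcal{H}^m(g(A)) = \int_A |J_g(x)|\,dx,
\end{equation*}
where $|J_g|:=\sqrt{\det(Dg^{\mathsf T}Dg)}$, valid for Borel $A\subset\Omega$. Granted the formula, the estimate $|J_g|\le|Dg|^m\in L^1_\textnormal{loc}$ immediately yields $\mathcal{H}^m(g(A))=0$ whenever $\mathcal{L}^m(A)=0$, proving (N). To establish the formula, the idea is to decompose $\Omega$ into a countable union of compact sets on which $g$ is Lipschitz (Federer/Whitney-type selection, powered by the Sobolev bound and the pointwise a.e.\ differentiability of Sobolev maps in the critical range) plus a Lebesgue-null remainder; on the Lipschitz pieces the classical area formula applies directly, and on the remainder injectivity together with a degree-theoretic / monotonicity argument rules out any measure-theoretic mass in the image.

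The principal obstacle is step three at the \emph{critical} exponent $p=m$: without injectivity, $W^{1,m}$-maps can fail (N) spectacularly (these are exactly the space-fillings motivating this article), so the homeomorphism hypothesis is essential and must be fed into the proof of the area formula. Transporting the classical equidimensional results (Reshetnyak, Mal\'y--Martio) to the codimensional setting $m<n$ requires choosing the correct Jacobian and a careful approximation-by-Lipschitz step on compact exhaustions; this is precisely where I expect the main technical work to lie.
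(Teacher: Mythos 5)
This statement is not proved in the paper at all: it is quoted verbatim as Theorem~1.3 of \cite{CHM} and used as a black box, so there is no internal proof to compare against. Judged on its own terms, your proposal has a genuine gap at its central step. The Fubini/ACL slicing in your first step is fine and standard, but it reduces the theorem to the claim that every continuous injective $g\in W^{1,m}_{\textnormal{loc}}(\Omega,\field{R}^m\times\field{R}^{n-m})$ with $\Omega\subset\field{R}^m$ open satisfies condition~(N) with respect to $\mathcal{H}^m$ --- and that claim is both unproved in your sketch and, at the critical exponent $p=m$ in codimension $n-m\geq 1$, not something you can expect to hold. Your proposed proof of it is circular: the area formula with \emph{equality} for all Borel sets already contains condition~(N), since after the Lipschitz decomposition $A=Z\cup\bigcup_i K_i$ with $\leb^m(Z)=0$ the whole issue is precisely to show $\mathcal{H}^m(g(Z))=0$. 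The tools you invoke to handle $Z$ do not exist in this setting: topological degree and the Reshetnyak-type monotonicity arguments that rescue injective maps in the equidimensional borderline case $W^{1,n}(\Omega^n,\field{R}^n)$ rest on the fact that $g(\partial B)$ separates $\field{R}^n$ and traps $g(B)$, which fails completely when the target has higher dimension than the domain ($g(\partial B)$ is too small to separate $\field{R}^n$, so the oscillation of $g$ on a sphere gives no control on the $\mathcal{H}^m$-mass of the image of the ball).

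This is not a repairable presentation issue but a wrong reduction: if the slice-wise claim were true, \tref{CHMHomeo} would be an easy corollary of Fubini and would not need the ``for almost every $y$'' formulation. The actual proof in \cite{CHM} is genuinely $n$\nobreakdash-dimensional: it exploits that $f$ is a homeomorphism of the full cube, so that the images of distinct slices $(-1,1)^{n-1}\times\{y\}$ are pairwise disjoint, and derives the almost-everywhere conclusion by integrating estimates over $y$ rather than by establishing a property of each individual slice map. Any correct argument must use information transversal to the slice; a proof that only sees $f_y$ as an abstract injective $W^{1,n-1}$ embedding of an $(n-1)$\nobreakdash-dimensional domain cannot succeed.
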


Combining above \nref{CHMHomeo} with \tref{Rigidity} and further exhausting the sets $(-1,1)^{n-1}\times\{y\}$ with compact sets, we obtain the following result:
\begin{corollary}
Let $f\in W^{1,n-1}_\textnormal{loc}((-1,1)^n,\field{R}^n)$ be a homeomorphism. Then for almost every $y\in (-1,1)$, we obtain that $f((-1,1)^{n-1}\times \{y\})$ has dimension bounded from above by $n-1$.
\end{corollary}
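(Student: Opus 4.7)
The strategy is to feed a generic horizontal slice, together with the Luzin condition that \tref{CHMHomeo} provides for it, into \tref{Rigidity}; the latter will turn the Luzin condition into $\sigma$\nobreakdash-finiteness of the image, which then caps the Hausdorff dimension.

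Fix any $y\in(-1,1)$ for which \tref{CHMHomeo} applies, so that the restriction $f\trestriction_{(-1,1)^{n-1}\times\{y\}}$ satisfies Luzin's condition~(N) with respect to $\mathcal{H}^{n-1}$. Writing the slice as the countable increasing union $\bigcup_k K_k$ with $K_k:=[-1+1/k,\,1-1/k]^{n-1}\times\{y\}$, each $K_k$ is compact, hence Polish and thus analytic Hausdorff (by the remark after the definition of Polish spaces together with \tref{Souslin}), and carries the finite Borel measure $\mathcal{H}^{n-1}\trestriction_{K_k}$. The target $(\field{R}^n,d)$ is complete and separable, $h(t)=t^{n-1}$ is a continuous Hausdorff function of finite order with $h(0)=0$, and $f\trestriction_{K_k}$ is continuous (since $f$ is a homeomorphism), hence Borel measurable. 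Finally, Luzin's condition~(N) descends to $K_k$ trivially, because null subsets of $K_k$ are null subsets of the full slice.

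Therefore \tref{Rigidity} applies to $f\trestriction_{K_k}\colon K_k\to \field{R}^n$ and yields that $f(K_k)$ is $\sigma$\nobreakdash-finite with respect to $\mathcal{H}^{n-1}$. Taking the countable union over $k\in\field{N}$, the set $f((-1,1)^{n-1}\times\{y\})=\bigcup_k f(K_k)$ is itself $\sigma$\nobreakdash-finite with respect to $\mathcal{H}^{n-1}$. A set that is $\sigma$\nobreakdash-finite with respect to $\mathcal{H}^{n-1}$ has Hausdorff dimension at most $n-1$: writing it as a countable union of pieces of finite $\mathcal{H}^{n-1}$\nobreakdash-measure, each piece has $\mathcal{H}^s=0$ for every $s>n-1$, and Hausdorff dimension is countably stable.

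There is no substantive obstacle here; the result is essentially the concatenation of \tref{CHMHomeo} with \tref{Rigidity}, together with the elementary observation connecting $\sigma$\nobreakdash-finiteness to dimension. The only minor care needed is in verifying the hypotheses of \tref{Rigidity} on the slice, which is why we exhaust by compact $K_k$: this bypasses any issue of replacing the slice (which is open in an affine hyperplane and hence not complete in itself) by a compact, and therefore manifestly analytic Hausdorff, set of finite measure.
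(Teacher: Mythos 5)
Your proof is correct and follows exactly the route the paper indicates: the paper gives no detailed argument, only the one-line recipe of combining \tref{CHMHomeo} with \tref{Rigidity} and exhausting each slice by compact sets, which is precisely what you carry out (including the verification of the hypotheses of \tref{Rigidity} on each compact piece and the standard passage from $\sigma$\nobreakdash-finiteness with respect to $\mathcal{H}^{n-1}$ to the dimension bound).
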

\iftoggle{SF}{}{\todonote{I promised to talk about space-fillings, but did not}}

\bibliographystyle{alpha}
\bibliography{Dichotomy}
%\listoftodos
\end{document}
26B35 View Publications (1980-now) Special properties of functions of several variables, Hölder conditions, etc.
28A75 View Publications (1973-now) Length, area, volume, other geometric measure theory
51M25 View Publications (1980-now) Length, area and volume [See also 26B15]